\long\def\/*#1*/{}
\newcommand\given[1][]{\:#1\vert\:}
\newsavebox\myboxA
\newsavebox\myboxB
\newlength\mylenA
\newcommand*\xoverline[2][0.75]{%
    \sbox{\myboxA}{$\m@th#2$}%
    \setbox\myboxB\null% Phantom box
    \ht\myboxB=\ht\myboxA%
    \dp\myboxB=\dp\myboxA%
    \wd\myboxB=#1\wd\myboxA% Scale phantom
    \sbox\myboxB{$\m@th\overline{\copy\myboxB}$}%  Overlined phantom
    \setlength\mylenA{\the\wd\myboxA}%   calc width diff
    \addtolength\mylenA{-\the\wd\myboxB}%
    \ifdim\wd\myboxB<\wd\myboxA%
       \rlap{\hskip 0.5\mylenA\usebox\myboxB}{\usebox\myboxA}%
    \else
        \hskip -0.5\mylenA\rlap{\usebox\myboxA}{\hskip 0.5\mylenA\usebox\myboxB}%
    \fi}
\numberwithin{equation}{section}
\numberwithin{equation}{section}
\def\sss{\scriptscriptstyle}
\newcommand*{\Mtilde}[1]{\skew{5}{\tilde}{#1}}
\newcommand{\prob}[1]{\ensuremath{\mathbbm{P}\left(#1\right)}}
\newcommand{\expt}[1]{\ensuremath{\mathbbm{E}\left[#1\right]}}
\newcommand{\var}[1]{\ensuremath{\mathrm{Var}\left(#1\right)}}
\newcommand{\ind}[1]{\ensuremath{\mathbbm{1}{\left\{#1\right\}}}}
\newcommand{\pto}{\ensuremath{\xrightarrow{\mathbbm{P}}}}
\newcommand{\dto}{\ensuremath{\xrightarrow{d}}}
\newcommand{\PR}{\ensuremath{\mathbbm{P}}}
\newcommand{\E}{\ensuremath{\mathbbm{E}}}
\newcommand{\e}{\ensuremath{\mathrm{e}}}
\newcommand{\OP}{\ensuremath{O_{\sss\PR}}}
\newcommand{\oP}{\ensuremath{o_{\sss\PR}}}
\newcommand{\CM}{\ensuremath{\mathrm{CM}_n(\boldsymbol{d})}}
\newcommand{\ER}{\ensuremath{\mathrm{ER}_n(d/n)}}
\newcommand{\bld}[1]{\boldsymbol{#1}}
\newcommand{\C}{\mathscr{C}_{\sss (1)}}
\newcommand{\supth}{\mathrm{th}}
\newcommand{\Mcut}{{\sf{MaxCut}}}
\newcommand{\DB}{{\sf DistBip}}
\newcommand{\TC}{\mathrm{TC}}
\newcommand{\LTC}{\mathrm{LTC}}
\newcommand{\DS}{\mathrm{DS}}
\newcommand{\LDS}{\mathrm{LDS}}
\newcommand{\cG}{\mathcal{G}}
\newcommand{\sCL}{\mathscr{C}_{\sss (1)}}
\newtheorem{theorem}{Theorem}
\newtheorem*{claim*}{Claim}
\newtheorem{claim}{Claim}
\newtheorem{algo}{Algorithm}
\newtheorem{lemma}[theorem]{Lemma}
\newtheorem{proposition}[theorem]{Proposition}
\newtheorem{assumption}{Assumption}
\newtheorem{remark}{Remark}
\newtheorem{defn}{Definition}
\newenvironment{claimproof}{\begin{proof}\renewcommand{\qedsymbol}{\claimqed}}{\end{proof}\renewcommand{\qedsymbol}{\plainqed}}
\let\plainqed\qedsymbol
\numberwithin{equation}{section}
\numberwithin{theorem}{section}
\begin{document}

\title{Phase transitions of extremal cuts for \\
the configuration model}
\author[$1$]{Souvik Dhara\footnote{s.dhara@tue.nl \hspace{3cm}$^\dagger$d.mukherjee@tue.nl \hspace{2.5cm}$^\ddagger$subse@microsoft.com}}
\author[$1$]{Debankur Mukherjee$^\dagger$}
\author[$2$]{Subhabrata Sen$^\ddagger$}
\affil[$1$]{%Department of Mathematics and Computer Science,
Eindhoven University of Technology, The Netherlands}
\affil[$2$]{Microsoft Research New England
and MIT}

\renewcommand\Authands{, }

\date{\today}

\maketitle 

\begin{abstract} 
The $k$-section width and the Max-Cut for the configuration model are shown to exhibit phase transitions according to the values of certain parameters of the asymptotic degree distribution. These transitions mirror those observed on Erd\H{o}s-R\'enyi random graphs, established by Luczak and McDiarmid (2001), and Coppersmith et al.~(2004), respectively. 

%\textcolor{blue}{ Our proofs crucially utilize a novel sequential construction of configuration model random graphs.} 
\end{abstract}

\section{Introduction} 
Graph cut problems have a very rich history in Combinatorics and Theoretical Computer Science. 
Given a graph $G= (V,E)$, the $k$-section problem seeks to partition the vertices $V= V_1 \sqcup V_2 \sqcup \cdots \sqcup V_k$ into $k$ equal sets (or differing by at most 1) such that the number of edges between the distinct sets is minimized. 
The minimum number of cross edges $w_k(G)$ thus obtained is referred to as the $k$-section  width.
The related Max-Cut problem seeks to divide the vertices into two sets (not necessarily equal) such that the number of edges between the two sets is maximized. 
{These graph-partitioning problems are extremely important for numerous practical applications  in network optimization, VLSI circuit design, computational geometry, and statistical physics \cite{CKC83,CD87,HM02,MPV87,JS93,poljak1995tuza, diazpetitserna}.} On the other hand, from the perspective of Theoretical Computer Science, these problems are computationally hard, and even approximating the Max-Cut up to a constant factor is NP-hard~\cite{FK02, K04, HZ01,H01}. 
The study of these problems in the average case is mainly motivated by a desire to understand various graph partitioning heuristics. 
Problem instances are usually chosen to be the Erd\H{o}s-R\'enyi random graph, or the random regular graph. 
An  Erd\H{o}s-R\'enyi random graph $\mathrm{ER}_n(d/n)$ is constructed on $n$ vertices, where any two vertices share an edge with probability $d/n$, independently of each other. 
A $d$-regular random graph is drawn uniformly at random from the space of all $d$-regular graphs on $n$ vertices. 
We note that these graph ensembles are \emph{sparse}, in that typical graphs on $n$ vertices have order $n$ edges and the degree of a typical vertex is of the constant order. 
{See~\cite{bollobas,RGCN1,RGCN2,JLR00}} for a detailed review of the properties of these random graphs.

Both $k$-section width and Max-Cut undergo phase transitions on the sparse Erd\H{o}s-R\'enyi random graph. 
These transitions reflect certain structural characteristics of the underlying graphs. 
Consider the $k$-section width problem for $\mathrm{ER}_n(d/n)$, with $k=2$. 
For $d< 2\ln (2)$, the bisection width is exactly $0$ with high probability, while for $d > 2 \ln( 2)$, the bisection width is of order $n$, with high probability \cite{LM01}. 
The Max-Cut also undergoes a phase transition;
 for $d<1$, the difference between the total number of edges and the Max-Cut is of the constant order, while it is of the order $n$ for $d>1$~\cite{CGHS04}. 
The distribution of the Max-Cut within the critical window is analyzed by~\citet{DMRR12}, while the critical behavior of the bisection width is largely unknown.

A crucial point to note in this context is that both sparse Erd\H{o}s-R\'enyi and random regular graph ensembles lead to homogeneous instances, in the sense that any two vertices share an edge with equal probability. 
This is very different from the instances actually encountered in practical applications. 
Real networks are extremely inhomogeneous, and often display certain characteristic features, such as  a power-law decay in the tail of the degree distribution {\cite{SFFF03,FFF99,BA99,AB02,RGCN1}}. 
Thus, it is of natural interest to study the behavior of the \emph{extremal} cuts for graphs with more general degree distributions. 
The configuration model~\cite{B80,MR95} provides a canonical scheme for generating uniform random graphs with \emph{any} prescribed degree sequence. 
This model is thus attractive for studying real-world networks, and analysis of its structural properties have attracted considerable attention in recent years \cite{MR95,J08,J09,J09b,JL07,JL09}. 
It is worthwhile to mention that despite the presence of very high degree vertices, a plethora of modern research remarkably conveys a qualitatively similar behavior of various statistics in this model to those in Erd\H{o}s-R\'enyi random graphs, confirming empirical evidences.

In this paper, we initiate a study of similar phase transition phenomena of the extremal cuts for the configuration model.
The main takeaway of our results is that the phase transitions for the extremal cuts are robust, and are present in a large class of random graphs, viz.~configuration models with finite second moment. 
This emphasizes that in the class of sparse non-spatial random graphs these phase transition phenomena are not intimately dependent on the precise model details, but are determined by the component sizes and the structures of the typical local neighborhoods.  
Technically, the proofs in the Erd\H{o}s-R\'enyi case crucially utilize the independence and homogeneity in the model --- while 
we rely on the recent insights about the structure of the configuration model~\cite{J08, J09b, JL07,JL09}
to establish our results.
We also prove several novel structural properties of the connected components (see Sections~\ref{ssec:ksection-supcrit} and~\ref{sec:local_approx}).
Among many other intermediate results, we show that the largest connected component consists of a \emph{well-connected} 2-core (Lemma~\ref{lem-epsilon-delta-2-core}) and several \emph{thin} hanging trees (Lemma~\ref{lem:small-hanging-trees}), and most of the connected components except the largest are finite (Lemma~\ref{lem:small-no-giant}).
Furthermore, we obtain that when the largest connected component is of order $n$, it must be \emph{stable}, in the sense that $\Theta(n)$ edges \emph{must} be deleted in order to separate out any $\Theta(n)$ vertices (Proposition~\ref{prop-no-e-d-cut}).
The latter notion is particularly useful to study the stability of the largest connected component subject to intelligent attacks (edge deletion) on networks.

The rest of the paper is organized as follows: Section~\ref{sec:model} formally introduces the configuration model along with the assumptions on the underlying degree sequence and summarizes certain preliminary properties of this model. 
Section~\ref{sec:main_results} states the main results of this paper 
and offers several key insights. 
The proofs are included in Sections~\ref{sec:proof_minbisection}~and~\ref{sec:proof_maxcut}.

\section{Preliminaries}
\label{sec:model}

\paragraph{The configuration model.}
Consider a degree sequence $\bld{d}=(d_1,d_2,\dots,d_n)$ on the vertex set $[n]=\{1,2,\dots,n\}$. 
Equip vertex $j$ with $d_{j}$ stubs or \emph{half-edges}. Two half-edges create an edge once they are paired. 
Therefore, initially there are $\ell_n=\sum_{i \in [n]}d_i$ half-edges. 
Pick any one half-edge and pair it with a uniformly chosen half-edge from the remaining unpaired half-edges. 
Keep repeating the above procedure until all the unpaired half-edges are exhausted. 
The random graph constructed in this way is called the configuration model, and will henceforth be denoted by $\mathrm{CM}_{n}(\boldsymbol{d})$.
Moreover, under rather general assumptions (see Assumption~\ref{assumption1} below), the asymptotic probability of the graph being simple is bounded away from zero~\cite{J09c}.

  Note that the graph constructed by the above procedure may contain self-loops and multiple edges. It can be shown that conditionally on $\mathrm{CM}_{n}(\boldsymbol{d})$ being simple, the law of such graphs is uniform over all possible simple graphs with degree sequence $\boldsymbol{d}$ (cf.~\cite[Proposition 7.7]{RGCN1}, \cite{JKLP93}). 
  
A vertex chosen uniformly at random from the vertex set $[n]$, independently of the graph $\CM$ is called a \emph{typical} vertex.
  Let $D_n$ be the degree of a typical vertex. 
Throughout this paper we assume the following:
\begin{assumption}\label{assumption1}
\normalfont 
Let $\bld{d} = \bld{d}_n$ be a degree sequence on $[n]$. 
The sequence of degree sequences $(\bld{d}_n)_{n\geq 1}$ is such that
\begin{enumerate}[a.] 
\item \label{assumption1-1}  $D_n\dto D$ (\emph{weak convergence of the degree of a typical vertex});
\item \label{assumption1-2}  $\E[D_n]\to \E[D]$, and $\E[D_n^2]\to\E[D^2]$ (\emph{moment assumptions});
\item \label{assumption1-3} $\PR(D=1)>0$ (\emph{positive proportion of degree one vertices}).
\end{enumerate}
\end{assumption}

Like most other sparse random graph models, $\CM$ exhibits a phase transition in terms of the size of its largest connected component, and this has been studied extensively in \cite{MR95,JL09}. 
The phase transition occurs when the value of the parameter 
\begin{equation}
 \nu : = \frac{\expt{D(D-1)}}{\expt{D}} \label{eq:nu}
\end{equation}
exceeds one (cf.~\cite{JL09}).
More precisely, let $g_D(x):=\E[x^D]$ be the probability generating function of $D$, and let $\xi$ be the unique nonzero solution to the equation $g_D'(x)=\E[D] x$. Define
\begin{equation}\label{defn:eta}
\eta=1-g_D(\xi).
\end{equation}  
For  $i\geq 1$, denote the $i^{\supth}$ largest component of $\CM$ by $\mathscr{C}_{\sss (i)}$. 
Then the following theorem characterizes the asymptotic proportion of vertices in each component:
\begin{theorem}[{\cite[Theorem 2.3]{JL09}}]\label{thm:JL09}
Consider \CM\ satisfying {\rm Assumption~\ref{assumption1}}. Then, 
\begin{enumerate}[{\normalfont (i)}]
 \item $|\mathscr{C}_{\sss (1)}|/n\pto \eta$, as $n\to\infty$, where $\eta$ is as defined in \eqref{defn:eta}. 
 Further, $\eta >0$ if and only if $\nu>1.$
 \item Moreover, $|\mathscr{C}_{\sss (i)}|/n\pto 0$, as $n\to\infty$, for all $i\geq 2$.
\end{enumerate}
\end{theorem}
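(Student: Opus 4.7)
The strategy is to combine local weak convergence of $\CM$ around a typical vertex with a concentration argument and a uniqueness-of-the-giant argument. First I set up the limiting branching process. Starting a breadth-first exploration from a uniformly chosen vertex $v$, the root has degree $D_n \dto D$; each half-edge $e$ at the root is matched to a uniformly chosen free half-edge, so the neighbor reached by $e$ has degree distributed (asymptotically) as the size-biased law $\PR(D^{\ast}=k)=(k+1)\PR(D=k+1)/\E[D]$, and produces $D^{\ast}-1$ fresh half-edges (offspring). Under Assumption~\ref{assumption1}\ref{assumption1-1}--\ref{assumption1-2}, this exploration couples, up to depth $\Theta(\log n)$, to a two-stage Galton--Watson tree with root law $D$ and offspring law $D^{\ast}-1$ of mean $\nu$. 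Standard branching-process theory gives survival probability $0$ when $\nu \le 1$; when $\nu>1$, the extinction probability from a single offspring is the smallest root $\xi\in[0,1)$ of $g_{D^{\ast}}(x)=g_D'(x)/\E[D]=x$, which is the equation $g_D'(x)=\E[D]x$ appearing in the definition of $\eta$, and the survival probability of the whole tree is $1-g_D(\xi)=\eta$.

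Second, I would transfer this to $\CM$. For any fixed $K$, define $Z_n(K) = \#\{v\in[n]:|\sCL(v)|\ge K\}$. The local coupling yields $\PR(|\sCL(v)|\ge K)\to \PR(\text{GW tree has }\ge K\text{ vertices})$, and the latter converges to $\eta$ as $K\to\infty$. Hence $\E[Z_n(K)]/n \to \eta$ after taking $K\to \infty$ along a slow sequence $K_n\to\infty$. To upgrade to convergence in probability, I compute a second moment: couple BFS explorations from two independent uniform vertices $v,w$ using the fact that before exploration begins the half-edges are exchangeable. With high probability the two BFS trees do not share a half-edge before reaching size $K_n$, so $\PR(|\sCL(v)|,|\sCL(w)|\ge K_n)\to \eta^2$. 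This gives $\var(Z_n(K_n)/n)\to 0$, hence $Z_n(K_n)/n \pto \eta$.

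Third, and this is the hard step, I need to show that almost all of these $\approx \eta n$ vertices lie in a single connected component, which will then be $\mathscr{C}_{\sss(1)}$. I would use a sprinkling/merging argument specific to $\CM$: continue the coupled explorations from $v$ and $w$ a little further, using the crucial property that conditional on the partial explorations, the remaining half-edges are paired uniformly at random. If both explorations survive to depth $K_n$, each already contains $\Theta(K_n)$ unpaired half-edges on its boundary, and a direct calculation with the uniform matching shows that with probability tending to $1$ at least one pair of boundary half-edges (one from each side) is matched, merging the components. Hence $\PR(v\leftrightarrow w\mid |\sCL(v)|,|\sCL(w)|\ge K_n)\to 1$, so a.e.\ pair of ``large'' vertices lies in the same component; this forces $|\mathscr{C}_{\sss(1)}|/n \pto \eta$ and $|\mathscr{C}_{\sss(i)}|/n \pto 0$ for $i\ge 2$.

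The main obstacle is exactly this uniqueness step: local arguments give information only about component sizes of individual vertices, not about global connectivity between them. The clean way around it relies on the uniformity of the pairing in $\CM$ (the key structural feature that replaces the independence used in the Erd\H{o}s--R\'enyi proof of~\cite{LM01,CGHS04}); executing it rigorously requires keeping careful track of the number of free half-edges inside and outside the explored set and controlling the conditional matching distribution, which is the technical heart of the Janson--Luczak approach.
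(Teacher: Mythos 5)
First, note that the paper does not prove this statement at all: Theorem~\ref{thm:JL09} is quoted verbatim from Janson and \L uczak \cite{JL09}, and their proof proceeds by an entirely different route from yours --- a continuous-time ``death process'' on half-edges whose trajectory concentrates around the solution of a deterministic equation, which is what produces $\xi$ and $\eta$ without any branching-process coupling or sprinkling. Your outline (local coupling to the two-stage Galton--Watson tree, second moment for $Z_n(K_n)$, then a merging argument) is a legitimate alternative strategy, and your first two steps are essentially sound under Assumption~\ref{assumption1} (finite second moment gives $d_{\max}=o(\sqrt{n})$, so the coupling holds while the explored set is $o(\sqrt{n})$).

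However, your uniqueness step has a genuine quantitative gap. If the two explorations each carry $m$ unpaired boundary half-edges out of $\ell_n=\Theta(n)$ total, the probability that none of one boundary is matched into the other is roughly $\exp(-m^2/\ell_n)$, so the ``direct calculation with the uniform matching'' only gives merging with probability tending to $1$ when $m\gg\sqrt{n}$. But the branching-process coupling you invoke is valid only while the explored neighborhoods have size $o(\sqrt{n})$ (beyond that, collisions between the explorations are no longer negligible), so surviving ``to depth $K_n$'' within the coupling regime yields boundaries of size at most $n^{1/2-\varepsilon}$, for which your matching estimate gives a connection probability tending to $0$, not $1$. The two regimes do not overlap, and bridging them is precisely the missing content: one must show (e.g.\ by a Chernoff/supermartingale argument on the number of active half-edges, tracking depletion) that a surviving exploration continues to grow geometrically past size $\sqrt{n}$, or use a genuine two-round exposure. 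You correctly flag that careful bookkeeping of free half-edges is the technical heart, but as written the argument does not close, and attributing the sprinkling route to Janson--\L uczak is inaccurate --- their method avoids this issue entirely.
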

\paragraph{Notation.} 
For any graph $G$, the $k$-section width and Max-Cut are denoted by $w_k(G)$ and $\Mcut(G)$, respectively.
We denote 
$$
\mu=\expt{D}
$$
to be the (asymptotic) expected degree of a typical vertex.
%We denote the number of edges in the graph $G$ by $\mathrm{E}(G)$
The degree of a vertex $v$ is denoted by $d_v$, and 
the number of vertices of degree $k$ by $n_k$, $k\geq 0$.
If two vertices $u$ and $v$ share an edge, then we write $u\leftrightsquigarrow v$.
For a nonempty subset $U\subseteq [n]$ of vertices, the neighborhood (or 1-neighborhood) is defined as 
$$\mathcal{N}[U,1]:= U\cup \{v\in[n]:u\leftrightsquigarrow v\mbox{ for some }u\in U\},$$
and the $r$-neighborhood is defined as $\mathcal{N}[U,r]:=\mathcal{N}[\mathcal{N}[U,r-1],1]$, $r>1$.
For any subset of vertices $A$, we denote the half-edges incident to the vertices in $A$ by $S(A)$, and the number of edges between $A$ and $A^c$ by $E(A,A^c)$.
For any integer $m\geq 1$, we denote $(2m)!! := (2m-1)(2m-3)\cdots 1$.
All the limiting statements should be understood as $n\to\infty$, unless specified otherwise. 
For a sequence of probability measures $(\PR_n)_{n\geq 1}$, the sequence of events $(\mathcal{E}_n)_{n\geq 1}$ is said to hold \emph{with high probability} if $\PR_n(\mathcal{E}_n)\to 1$.
We use the usual Bachmann-Landau notations $o(\cdot)$, $O(\cdot)$, and $\Theta(\cdot)$ to write asymptotic comparisons.
For two sequences of random variables $(X_n)_{n\geq 1}$, and $(Y_n)_{n\geq 1}$, we write $X_n = \oP(Y_n)$ to denote that $X_n/Y_n\xrightarrow{\sss \PR} 0$.

\section{Main results}
\label{sec:main_results}
In this section we state the main results of this paper, and discuss several heuristics.
\begin{theorem}[Phase transition of the $k$-section width]\label{thm:min-bis}
Consider $\CM$ satisfying {\rm Assumption~\ref{assumption1}}, and let $k\geq 2$ be an integer.
 Then $w_k(\CM)$ exhibits a phase transition around $\eta =1/k$. More precisely, 
 \begin{enumerate}[{\normalfont (i)}]
 \item \label{thm:min-bis-i} If $\eta <1/k$, then  with high probability $w_k(\CM)\leq k/2$;
 \item \label{thm:min-bis-ii} If $\eta >1/k$, then there exists $\zeta>0$, such that with high probability $w_k(\CM)>\zeta n$.
 \end{enumerate}
\end{theorem}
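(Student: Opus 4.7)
The plan splits naturally into the subcritical regime of part~\ref{thm:min-bis-i} ($\eta<1/k$), which is essentially a bin-packing exercise, and the supercritical regime of part~\ref{thm:min-bis-ii} ($\eta>1/k$), which relies on structural stability of the giant component. For the subcritical direction, Theorem~\ref{thm:JL09} gives $|\C|/n\pto \eta<1/k$ together with $|\mathscr{C}_{\sss (i)}|/n\pto 0$ for all $i\geq 2$, so with high probability $|\C|\leq \lfloor n/k\rfloor$ and every other component has sublinear size. I would first place $\C$ entirely inside one bin and then distribute the remaining small components among the $k$ bins using a greedy first-fit-decreasing rule. Assumption~\ref{assumption1}~(\ref{assumption1-3}) ensures a linear number of very small components---in particular isolated edges formed by two degree-one vertices paired together---that can serve as atomic adjustment units, so the bins can be made exactly $\lfloor n/k\rfloor$ or $\lceil n/k\rceil$ in size while splitting only a bounded number of components near their boundaries; restricting the splits to low-degree pieces yields the uniform bound $w_k(\CM)\leq k/2$.

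For part~\ref{thm:min-bis-ii}, fix any balanced $k$-partition $V_1\sqcup\cdots\sqcup V_k$ of $[n]$ and set $A_i:=\C\cap V_i$. Since $\eta>1/k$, Theorem~\ref{thm:JL09} gives $|\C|\geq (1/k+\delta)n$ with high probability for some $\delta>0$, while each $|V_i|\leq \lceil n/k\rceil$; hence $|\C\setminus A_i|\geq \delta n/2$ for every $i$. By pigeonhole, some index $i^*$ satisfies $|A_{i^*}|\geq |\C|/k=\Theta(n)$, so the partition induces a cut of $\C$ separating $A_{i^*}$ from $\C\setminus A_{i^*}$, with both sides of linear size. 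Applying Proposition~\ref{prop-no-e-d-cut}, which asserts that separating $\Theta(n)$ vertices from $\C$ requires $\Theta(n)$ cut edges, then gives a lower bound $w_k(\CM)\geq \zeta n$ for some $\zeta>0$ that is uniform in the partition.

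The main obstacle is the structural input Proposition~\ref{prop-no-e-d-cut} that is used as a black box above; this is where the analysis of $\CM$ really does work. I expect its proof to decompose the giant into an expander-like 2-core (Lemma~\ref{lem-epsilon-delta-2-core}) together with only thin hanging trees (Lemma~\ref{lem:small-hanging-trees}), so that any linear-vs-linear cut of $\C$ either slices through the 2-core---costing $\Theta(n)$ edges by expansion---or would require peeling off more vertices via the boundary trees than those trees actually hold. Transferring edge-expansion from the 2-core to all of $\C$ with explicit constants, uniformly over admissible choices of $A_{i^*}$, is the technically delicate step and the one that departs most sharply from the Erd\H{o}s--R\'enyi case, where independence and homogeneity make such expansion estimates routine.
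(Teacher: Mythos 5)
Your proposal is correct and takes essentially the same route as the paper: the subcritical case is the same greedy packing of large components followed by balancing with the $\Theta(n)$ components of size at most two (the paper's Lemma~\ref{lem:graph-sub} and Lemma~\ref{lem:num-pairs}, the latter requiring the short second-moment computation you implicitly assume), and the supercritical case reduces, exactly as in the paper, to the nonexistence of an $(\varepsilon,\delta)$-cut of the giant (Proposition~\ref{prop-no-e-d-cut}). You also correctly anticipate that the proposition is proved via the well-connected 2-core (Lemma~\ref{lem-epsilon-delta-2-core}) together with thin hanging trees (Lemma~\ref{lem:thin-tree}).
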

Theorem \ref{thm:min-bis} is proved in Section \ref{sec:proof_minbisection}. This result is comparable to \cite[Theorem 1]{LM01}, established in the context of Erd\H{o}s-R\'enyi random graphs. 
As mentioned earlier, the proof for the Erd\H{o}s-R\'enyi case makes crucial use of the fact that the edge occupancies are independent and identically distributed --- a feature that is absent in this case. 
The proof in this paper, on the other hand, is more robust, and depends on a clear understanding of the local neighborhood structure in these random graphs. 
Roughly speaking, when $\eta<1/k$, the strategy is to distribute all the components of size at least 3 among $k$ partitions as evenly as possible, and then to add the components of size at most 2 to balance the partitions.
Since  the size of the largest component is smaller than $n/k$ and the other components are very small ($o(n)$) in size, a $k$-partition  can be made using the components of size at least 3, with at most $n/k$ vertices in each part.
Because there are sufficiently many components of size at most~2 (Lemma~\ref{lem:num-pairs}),
these can be used to balance the partitions. 
The latter step results in at most $k/2$ cross edges between the partitions. 
The above proof outline for the subcritical case is formalized in Section~\ref{ssec:ksection-subcrit}. 
Alternatively, when $\eta>1/k$, the size of the largest connected component is more than $n/k$. 
Therefore, in order to split the graph into $k$ equal partitions,
the largest component must be split into at least two (possibly unequal) parts, each containing a positive proportion of vertices, and from the structural properties of the largest component, we show that with high probability this creates $\Theta(n)$ cross edges. 
The proof for the supercritical case is provided in Section~\ref{ssec:ksection-supcrit}.
\begin{remark}\label{rem:isolated-vert}\normalfont
\cite[Theorem 1]{LM01} establishes that the $k$-section width is exactly zero below a critical threshold given by $\eta = 1/k$. 
This holds for the Erd\H{o}s-R\'enyi case due to the natural presence of many isolated vertices. 
For a general configuration model, this is not necessarily true, and therefore, Theorem \ref{thm:min-bis}~(i) is indeed the best possible result that one can hope for in this case. 
In particular, if we assume the presence of a positive fraction of isolated vertices in the degree sequence, then using Lemma~\ref{lem:graph-sub} below, we recover the same result as in~\cite{LM01}. 
\end{remark}

We continue to describe our results for $\Mcut(\CM)$.
For this let us introduce a further notation. 
The difference between the total number of edges and the Max-Cut is often referred to as the \emph{distance from bipartiteness} of a graph $G$, and will be denoted by $\DB(G)$.
In other words, $\DB(G)$ counts the minimum number of edges in $G$ to be deleted in order to make it bipartite. 
Recall that $\mu = \expt{D}$.
\begin{theorem}[Phase transition of the Max-Cut]
\label{thm:max-cut}
 Consider $\CM$ satisfying {\rm Assumption~\ref{assumption1}}. Then $\Mcut(\CM)$ admits a phase transition around $\nu=1$. More precisely, 
  \begin{enumerate}[{\normalfont (i)}] 
  \item {\normalfont (Subcritical)} If $\nu<1$, then as $n\to\infty$,
  $$\DB(\CM)\overset{\mathrm{d}}{\longrightarrow} Z\sim\mathrm{Poisson}\left(\frac{1}{4}\ln\left(\frac{1+\nu}{1-\nu}\right)\right).$$
  \item {\normalfont(Supercritical)} If $\nu>1$, then there exists $\delta>0$, such that with high probability, 
  $$\DB(\CM)>\delta n.$$
  \item {\normalfont(High-density regime)} Furthermore, when $\mu>2$, then there exists $0<c^\star(\mu)<\sqrt{\mu} /4$, such that for any $c>c^{\star}(\mu)$, with high probability, 
  $$\Mcut(\CM) \leq n\left(\frac{\mu}{4} +  c \sqrt{\mu}\right),$$ 
 and $c^\star(\mu)\nearrow \sqrt{\ln (2)}/2$  as $\mu\nearrow\infty$.
   \end{enumerate}
\end{theorem}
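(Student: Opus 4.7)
My plan is to treat the three regimes with quite different tools.

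\textbf{Subcritical regime.} The strategy is to identify $\DB(\CM)$ with the number of odd cycles of the multigraph $\CM$ and then invoke the classical Poisson convergence of cycle counts. In the multigraph configuration model, the joint $k$-cycle counts $(C_k^{(n)})_{k\ge 1}$---where self-loops are $1$-cycles and pairs of parallel edges are $2$-cycles---converge in distribution to independent Poissons with parameters $\lambda_k=\nu^k/(2k)$. Summing over odd $k$ yields a Poisson limit with parameter
$\sum_{k\text{ odd}}\nu^k/(2k)=\tfrac14\ln\tfrac{1+\nu}{1-\nu}$, matching the statement. It remains to prove that $\DB(\CM)$ equals this odd-cycle count with high probability. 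Because $\nu<1$ forces the total surplus to be $\OP(1)$ and all component sizes $O(\log n)$, the $\OP(1)$ short cycles lie in distinct components asymptotically; each such component is a tree plus a single cycle, so deleting exactly one edge per odd cycle is both necessary and sufficient for bipartiteness.

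\textbf{Supercritical regime.} Here I would exploit the structural properties of $\C$ developed in Sections~\ref{ssec:ksection-supcrit}--\ref{sec:local_approx}, in particular that its $2$-core has $\Theta(n)$ vertices and excess $\Theta(n)$. My plan is to extract, via an ear-decomposition-type argument, $\Theta(n)$ essentially disjoint ``witnesses''---short cycles or theta-subgraphs---inside the $2$-core. Conditional on a spanning tree of the $2$-core, the parities of the cycles created by the remaining excess half-edges are, by the uniform random pairing together with the locally tree-like structure, asymptotically unbiased between odd and even; hence a positive fraction of the witnesses contain an odd cycle. Since each such odd witness forces at least one edge deletion to reach bipartiteness and the witnesses are (essentially) edge-disjoint, this yields $\DB(\CM)\ge \delta n$.

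\textbf{High-density regime.} This is a first-moment/union-bound argument over the $2^n$ bipartitions of $[n]$. For any fixed $(A,A^c)$, the number of crossing edges $X_A$ in $\CM$ is a bounded-differences functional of the uniform pairing and is sharply concentrated; its mean equals $S(A)S(A^c)/(\ell_n-1)$, which is maximized when $S(A)\approx \ell_n/2$, giving $\mu n/4+O(1)$. A Hoeffding/Azuma estimate yields
$\PR(X_A>\mu n/4+c\sqrt{\mu}\,n)\le \exp(-4c^2 n(1+o(1)))$ uniformly in $A$, so a union bound over the $2^n$ subsets succeeds whenever $4c^2>\ln 2$, identifying the asymptotic threshold $c^{\star}(\mu)\nearrow \sqrt{\ln 2}/2$ as $\mu\to\infty$. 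For every finite $\mu$ the trivial deterministic bound $\Mcut(\CM)\le|E(\CM)|\approx \mu n/2$ already gives $c^{\star}(\mu)<\sqrt{\mu}/4$.

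\textbf{Main obstacle.} The hardest step is clearly (ii): extracting $\Theta(n)$ essentially disjoint odd witnesses in the supercritical $2$-core while controlling the dependencies induced by the uniform pairing. A naive odd-cycle count fails because short odd cycles may overlap heavily, so the real work is to build the witnesses with genuine edge-disjointness---plausibly via a peeling argument on the $2$-core driven by its Galton--Watson local limit---or alternatively to argue by contradiction using the stability statement (Proposition~\ref{prop-no-e-d-cut}) to rule out the hypothetical scenario in which $o(n)$ edge deletions yield a bipartite graph.
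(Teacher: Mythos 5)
Your part (i) follows the paper's route (unicyclic components, Poisson cycle counts, tightness of cycle lengths) and is fine. The serious problem is part (ii). Your plan to extract $\Theta(n)$ essentially edge-disjoint short odd ``witnesses'' from the $2$-core cannot work: in a sparse configuration model the number of cycles of any fixed length $k$ converges to a Poisson variable with parameter $\nu^k/(2k)$, so the total number of cycles of bounded length is $\OP(1)$, not $\Theta(n)$. To get $\Theta(n)$ cycles one must go to lengths of order $\log n$, and $\Theta(n)$ edge-disjoint cycles of diverging length would require $\omega(n)$ edges, which the graph does not have. So there is no family of witnesses of the kind you describe, and the ``parity is unbiased'' step never gets off the ground. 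Your fallback via Proposition~\ref{prop-no-e-d-cut} also does not apply: that proposition rules out disconnecting $\varepsilon n$ vertices with $\delta n$ edge deletions, but a graph can be made bipartite without disconnecting anything. The paper instead proves that a supercritical $\CM$ is bipartite with probability at most $\e^{-C_0 n}$ (via a sequential construction in which degree-one vertices are coalesced, tracking the two partite classes), and then takes a union bound --- not over edge deletions, which destroy the configuration-model law, but over ``blow-ups'' of at most $\delta n$ vertices, an operation that preserves the conditional uniformity of the pairing. Some device of this kind is unavoidable, and it is absent from your proposal.

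Part (iii) has the right architecture (first moment over the $2^n$ bipartitions), but the quantitative claim is not justified as stated. The Azuma/bounded-differences inequality applied to the pairing martingale gives a tail bound of the form $\exp(-\alpha c^2 n)$ with a constant $\alpha$ strictly smaller than $4$ (a single swap of the matching changes the crossing count by up to $2$, which already costs a factor $4$ in the exponent), so the union bound would only beat $2^n$ for $c$ well above $\sqrt{\ln 2}/2$, and the claimed limit $c^\star(\mu)\nearrow\sqrt{\ln 2}/2$ would not follow. The sharp rate $\exp(-4c^2 n(1+o(1)))$ is the correct Gaussian heuristic, but to prove it you need the exact distribution of $E(A,A^c)$ for the uniform matching (a ratio of double factorials), which is precisely what the paper computes and optimizes. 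With that replacement your argument for (iii) goes through.
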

The proof of Theorem \ref{thm:max-cut} is included in Section \ref{sec:proof_maxcut}. 
Theorem \ref{thm:max-cut} establishes the phase transition for $\DB(\CM)$ for a wide class of degree sequences. 
The heuristic behind this phase transition is that when $\nu<1$, $\CM$ is \emph{roughly} a collection of trees and a \emph{finite} number of unicyclic components.
The trees do not contribute any edge to $\DB(\CM)$ at all, and the unicyclic components with an odd cycle (i.e., containing an odd number of edges) contributes at most one to the $\DB(\CM)$. 
On the other hand, when $\nu>1$, this is no longer true, and any partition must leave $\Theta(n)$ edges uncut.
Results analogous to Theorem~\ref{thm:max-cut}~(i)~and~(ii) were established for Erd\H{o}s-R\'enyi random graphs by Daud\'e at al.~\cite{DMRR12} and Coppersmith et al.~\cite{CGHS04}, respectively.  
\begin{remark}\label{rem:simple}
\normalfont
It was shown in \cite{J09c} that under Assumption~\ref{assumption1}, the probability of the graph being simple is bounded away from zero. 
Thus the phase transition results in Theorems~\ref{thm:min-bis}~and~\ref{thm:max-cut} also hold for the uniformly chosen simple graph with a prescribed degree sequence. 
Hence, all the results proved in the paper are true also for $\ER$, as well as the generalized random graphs under appropriate conditions \cite[Theorem 6.15]{RGCN1} on the weight sequence $\boldsymbol{w}$. 
In fact, the results are true for an even more general class of inhomogeneous random graph models (cf. \cite[Theorem 6.18]{RGCN1}).
\end{remark}
\begin{remark} 
\normalfont 
%As we will see in the proof of Theorem~\ref{thm:max-cut}, the value of $c^*(\mu)$ is given by
%$$c^\star(\mu)=\inf_{c>0}\bigg\{c: \Big(\frac{1}{4}-\frac{c}{\sqrt{\mu}}\Big)^{-\left(\frac{1}{4}-\frac{c}{\sqrt{\mu}}\right)}\Big(\frac{1}{4}+\frac{c}{\sqrt{\mu}}\Big)^{-\left(\frac{1}{4}+\frac{c}{\sqrt{\mu}}\right)}<2^{-\frac{1}{\mu}+1} \bigg\}.$$
\begin{figure}
  \centering
  \includegraphics[scale=.55]{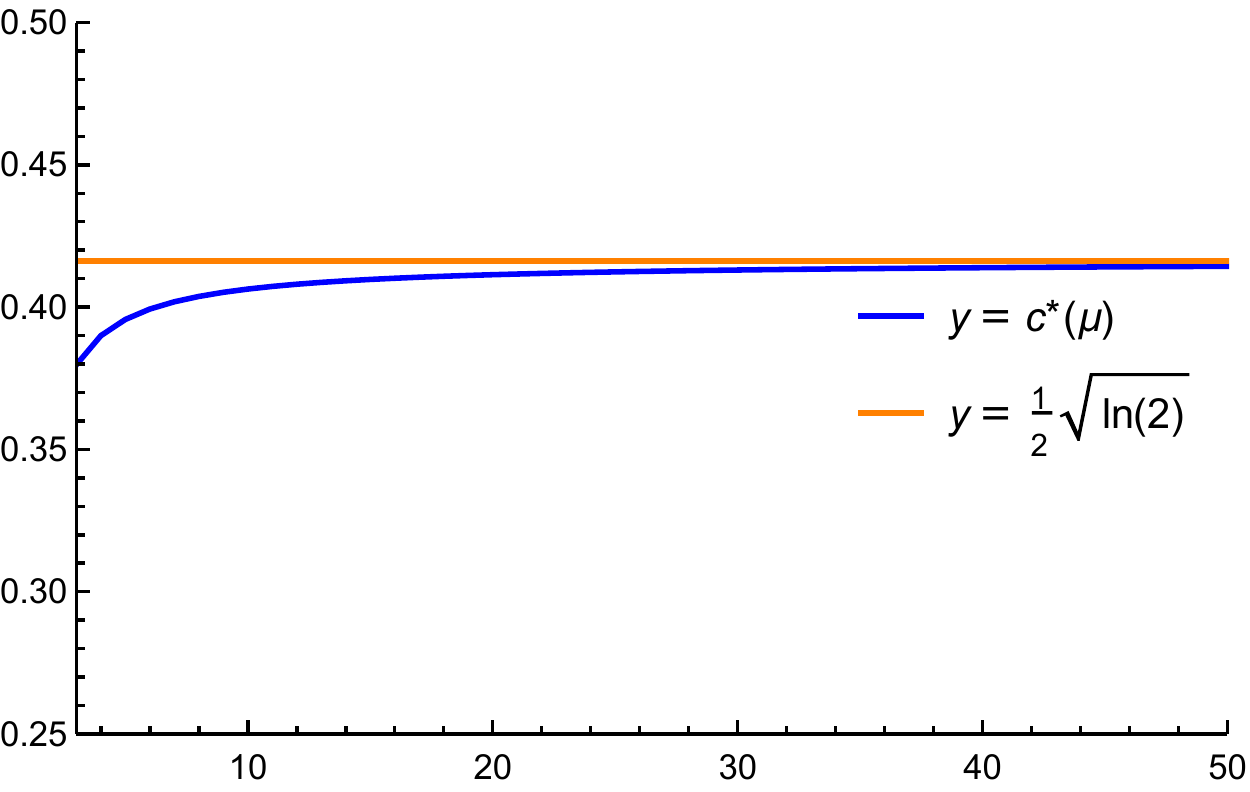}
\caption{Numerical values of $c^\star(\mu)$ for  $3\leq \mu \leq 50$.}
\label{fig:threshold-c}
\end{figure}
\noindent
%Numerically it can be seen that $c^\star(\mu)$ increases with $\mu$, and is approximately equal to $\sqrt{\ln(2)}/2$ for large enough $\mu$.
Figure~\ref{fig:threshold-c} shows the numerical values of $c^\star(\mu)$ for $3\leq \mu \leq 50$.
An exact expression of $c^\star(\mu)$ is given in~\eqref{eq:cstarmu}.
Notice that even for $\mu$-values as low as~30, $c^\star(\mu)$ is sufficiently close to $\sqrt{\ln(2)}/2$. 
This value agrees with the upper bound of Max-Cut for Erd\H{o}s-R\'enyi random graphs and random regular graphs in the high density regime as observed in~\cite[Theorem 20]{CGHS04} and~\cite[Theorem 2]{BCP97}, respectively.
Thus,  our result again establishes a universal behavior for a large class of inhomogeneous random graphs (see Remark~\ref{rem:simple}) as special cases.
\end{remark}

To further illustrate the usefulness of the above phase transition results, we consider graphs obtained by random deletion of edges from a given graph. Such results are crucial for studying the stability of networks to random link failures. 
Percolation refers to keeping the edges of a graph with a given probability $p_n$, independently among each other and independent of the underlying (random) graph. 
Using Theorems~\ref{thm:min-bis} and~\ref{thm:max-cut} we are able to characterize the threshold of the percolation probability for the configuration model, with respect to the $k$-section width and the Max-Cut.
Let $\mathrm{CM}_n(\bld{d},p_n)$ be the graph obtained by retaining the edges of $\CM$ with probability $p_n$. 
An important property of $\CM$ is that $\mathrm{CM}_n(\bld{d},p_n)$ is again distributed as a configuration model conditionally on its degree sequence \cite{F07,J09}. 
Therefore, one can deduce the phase transition results for the extremal cuts of $\mathrm{CM}_n(\bld{d},p_n)$ from Theorems~\ref{thm:min-bis}~and~\ref{thm:max-cut}. 
In fact, since the percolated graphs always have a positive proportion of isolated vertices in the sparse regime (Assumption~\ref{assumption1}), the minimum bisection below the threshold $\eta =1/k$  becomes exactly zero with high probability (see Remark~\ref{rem:isolated-vert}).

Let $k\geq 2$ be an integer.
Then the phase transition for $w_k(\mathrm{CM}_n(\bld{d},p_n))$ with $p_n\to p$, occurs at $p=p_{\min}(k,\bld{d})$, such that the asymptotic proportion of vertices in the largest connected component of $\mathrm{CM}_n(\bld{d},p_n)$ is precisely equal to $1/k$. 
For an arbitrary degree sequence, the explicit solution for $p_{\min}(k,\bld{d})$ is not immediate from \cite[Theorem 3.9]{J09}. 
However, in the particular case of percolation on the $d$-regular graph (i.e. $\bld{d} = d\bld{1} = (d,d,\dots,d)$) with $d\geq 3$, notice that by \cite[(3.13),(3.14)]{J09}, $p_{\min}(k,d)$ can be obtained as a solution for $p$ in the following system of equations:
\begin{equation}
 \sqrt{p}d(1-\sqrt{p}+\sqrt{p}\xi)^{d-1}+(1-\sqrt{p})d = d\xi, \quad 1-(1-\sqrt{p}+\sqrt{p}\xi)^{d}=\frac{1}{k},
\end{equation}and thus,
\begin{equation}
 p_{\min}(k,d) := \frac{1-\left(1-\frac{1}{k}\right)^{\frac{1}{d}}}{1-\left(1-\frac{1}{k}\right)^{\frac{d-1}{d}}}.
\end{equation}

It was shown in \cite[Theorem 3.9]{J09} that, when $p_n\to p$, the phase transition for the largest connected component occurs at $p=1/\nu$. 
This implies that the phase transition for $\Mcut(\mathrm{CM}_n(\bld{d},p_n))$ also occurs at $1/\nu$, which for $d$-regular random graphs equals
\begin{equation}
p_{\max}(d):=\frac{1}{d-1}.
\end{equation}

Therefore, given the phase transition results in Theorems~\ref{thm:min-bis} and~\ref{thm:max-cut}, we have proved the following theorem:
\begin{theorem}[{Extremal cuts for percolation on random $d$-regular graphs}]
Let $p_n\to p$ as $n\to\infty$.
Then for any $d\geq 3$, 
\begin{enumerate}[{\normalfont (i)}]
\item 
\begin{enumerate}[{\normalfont (a)}]
\item If $p<p_{\min}(k,d)$, then with high probability, $w_k(\mathrm{CM}_n(d\bld{1},p_n))= 0$.
\item Furthermore, if $p>p_{\min}(k,d)$, then there exists $\zeta>0$, such that with high probability, $w_k(\mathrm{CM}_n(d\bld{1},p_n))>\zeta n$.
\end{enumerate}
\item 
\begin{enumerate}[{\normalfont (a)}]
\item If $p<p_{\max}(d)$, then 
  $$\DB(\mathrm{CM}_n(d\bld{1},p_n))\overset{\mathrm{d}}{\longrightarrow} Z\sim\mathrm{Poisson}\left(\frac{1}{4}\ln\left(\frac{1+(d-1)p}{1-(d-1)p}\right)\right).$$
\item If $p>p_{\max}(d)$, then there exists $\delta>0$, such that with high probability,
 $$\DB(\mathrm{CM}_n(d\bld{1},p_n))>\delta n,$$
\item Further, if $p>2/d$, then for any $c> c^{\star}(d p)$, 
  $$\Mcut(\mathrm{CM}_n(d\bld{1},p_n)) \leq n\left(\frac{d p}{4} + c \sqrt{d p}\right),$$
 with high probability, where $c^{\star}(\cdot)$ is as given by Theorem~\ref{thm:max-cut}. 
  %where $c^\star(\mu)\approx \ln (2)$ for large $\mu$.
\end{enumerate}
\end{enumerate}
\end{theorem}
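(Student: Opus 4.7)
The plan is to reduce the theorem to Theorems \ref{thm:min-bis} and \ref{thm:max-cut} via the well-known fact, established in \cite{F07,J09}, that the percolated graph $\mathrm{CM}_n(d\bld{1},p_n)$ is itself distributed as a configuration model conditionally on its (random) degree sequence. Under this identification, the retained degree of each vertex has law $\mathrm{Bin}(d,p_n)$, so as $p_n\to p\in(0,1)$ the empirical degree distribution converges in probability to that of $D\sim\mathrm{Bin}(d,p)$ by the law of large numbers applied to bounded Bernoulli thinning. Since all degrees are bounded by $d$, the moment convergence required by Assumption \ref{assumption1} is automatic; and since $\PR(D=1)=dp(1-p)^{d-1}>0$ whenever $d\geq 3$ and $p\in(0,1)$, the positive proportion of degree-one vertices is also satisfied. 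Hence, with high probability, the percolated degree sequence satisfies Assumption \ref{assumption1}, and Theorems \ref{thm:min-bis} and \ref{thm:max-cut} become applicable.

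Next I would identify the thresholds. For the Max-Cut the critical parameter is
\[
\nu \;=\; \frac{\E[D(D-1)]}{\E[D]} \;=\; \frac{d(d-1)p^2}{dp} \;=\; (d-1)p,
\]
which equals $1$ precisely at $p=1/(d-1)=p_{\max}(d)$. Substituting $\nu=(d-1)p$ into the Poisson parameter of Theorem \ref{thm:max-cut}(i) gives part (ii)(a), and parts (ii)(b) and (ii)(c) follow by direct specialization of Theorem \ref{thm:max-cut}(ii) and (iii), using $\mu=\E[D]=dp$ to translate the condition $\mu>2$ into $p>2/d$ and $c^\star(\mu)=c^\star(dp)$. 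For the $k$-section width, the threshold $\eta=1/k$ of Theorem \ref{thm:min-bis} is characterized via \cite[Theorem 3.9]{J09} specialized to the $d$-regular case; eliminating $\xi$ from the stated system by using $1-\sqrt{p}+\sqrt{p}\xi=(1-1/k)^{1/d}$ from the second equation and substituting into the first gives the closed-form expression for $p_{\min}(k,d)$.

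Parts (i)(b) and all of part (ii) then follow by plugging these quantities into Theorems \ref{thm:min-bis} and \ref{thm:max-cut}. Part (i)(a) needs one additional observation: the percolated graph has a positive proportion of isolated vertices, since $\PR(D=0)=(1-p)^d>0$. This is exactly the extra ingredient invoked in Remark \ref{rem:isolated-vert}, which sharpens the conclusion of Theorem \ref{thm:min-bis}(i) from $w_k\leq k/2$ to $w_k=0$ with high probability. The only nontrivial input beyond the main theorems of this paper is the threshold formula from \cite[Theorem 3.9]{J09}, which is invoked as a black box; the rest of the proof is specialization and elementary computation, so no single step poses a genuine obstacle.
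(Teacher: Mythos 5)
Your proposal is correct and follows essentially the same route as the paper: both invoke the fact from \cite{F07,J09} that the percolated graph is a configuration model conditional on its degree sequence, verify Assumption~\ref{assumption1} for the limiting $\mathrm{Bin}(d,p)$ degree law (including $\PR(D=0)>0$ to upgrade part (i)(a) to $w_k=0$ via Remark~\ref{rem:isolated-vert}), compute $\nu=(d-1)p$ and $\mu=dp$, and read off the thresholds from \cite[Theorem 3.9]{J09}. Your elimination of $\xi$ to obtain the closed form of $p_{\min}(k,d)$ checks out against the system displayed in the paper.
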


\section{{Proof for the \texorpdfstring{$\bld{k}$}{k}-section width}}
\label{sec:proof_minbisection}
In this section we prove the phase transition of the $k$-section width stated in Theorem~\ref{thm:min-bis}. 
\subsection{Subcritical case} 
\label{ssec:ksection-subcrit}
In this subsection we present the proof of Theorem~\ref{thm:min-bis} (i).
In Lemma~\ref{lem:graph-sub} we first state a useful graph theoretic result, which ensures that if (i) the size of the largest component is smaller than $n/k$,
(ii) there are $\Theta(n)$ \emph{small} components (i.e., of size at most 2), and
 (iii) the size  of every component other than the $k$ largest components is smaller than the $k^{\mathrm{th}}$ fraction of the number of small components,    then the $k$-section width is at most $k/2$.
 This lemma is an extension of~{\cite[Lemma 9]{LM01}} to fit in the scenario when there are possibly no isolated vertices.
Then in Lemma~\ref{lem:num-pairs} we show that under Assumption~\ref{assumption1}, $\Theta(n)$ such \emph{small} components are created.
This will complete the proof of Theorem~\ref{thm:min-bis} (i).
\begin{lemma} 
\label{lem:graph-sub}
Consider a graph $G$ on $n$ vertices, with $m$ components of sizes $c_1\geq \dots\geq c_m$ such that 
(i) $c_1\leq n/k$, 
(ii) $\#\{i:c_i\leq 2\}\geq rn$ for some $r>0$, and
(iii) $c_i\leq rn/k$ for all $i>k$. 
Then, $w_k(G)\leq k/2$.
In addition, if $\#\{i:c_i=1\}\geq k-1$, then  $w_k(G)=0.$
\end{lemma}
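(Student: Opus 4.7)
The plan is to exhibit an explicit $k$-partition whose number of cross edges matches the claimed bound, by first distributing the components of size at least $3$ across $k$ bins, and then using the abundance of small components to balance the bins to the target sizes $T_1, \ldots, T_k \in \{\lfloor n/k \rfloor, \lceil n/k \rceil\}$ with $\sum_j T_j = n$.

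First I would place the components of size at least $3$ one at a time in decreasing order of size, each into the bin with the smallest current load. In this greedy scheme the first $k$ placements each land in an initially empty bin, so the resulting load is at most $c_1 \leq n/k$ by (i). For every subsequent placement of some $c_i$ with $i > k$, assumption (iii) gives $c_i \leq rn/k$, while the average-bound $b_j^{-} \leq N_L/k$ holds for the minimum load $b_j^{-}$ of bin $j$ right before the placement, where $N_L$ is the total size of the components of size at least $3$. Since $N_L \leq n - s$ with $s$ denoting the total number of vertices in small components and $s \geq \#\{i : c_i \leq 2\} \geq rn$ by (ii), I obtain $b_j \leq (1-r)n/k + rn/k = n/k$ after the placement. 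Being an integer, $b_j \leq \lfloor n/k \rfloor \leq T_j$ in every bin.

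Next I would fill the gaps $g_j := T_j - b_j \geq 0$ using the $p$ components of size $2$ (each a single edge) and the $q$ components of size $1$ (isolated vertices); these account for $2p + q = s$ vertices and $\sum_j g_j = s$. Let $o$ denote the number of bins with odd $g_j$; the identity $\sum_j g_j = 2p + q$ forces $o \equiv q \pmod 2$. When $q \geq o$, I place one singleton in each odd-gap bin, turning all residual gaps even, and then pack whole size-$2$ components, supplemented by pairs of leftover singletons, into the bins to finish; no size-$2$ component is split, so no cross edge is introduced. When $q < o$, I place singletons in $q$ of the odd-gap bins, pair up the remaining $o - q$ odd-gap bins (an even number by the parity above), and split one size-$2$ component across each such couple, producing $(o-q)/2 \leq o/2 \leq k/2$ cross edges; the residual even gaps are then filled by unsplit size-$2$ components. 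Feasibility $(o - q)/2 \leq p$ follows from $o \leq s = 2p + q$, since each odd-gap bin contributes at least $1$ to $s$. This proves $w_k(G) \leq k/2$.

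For the in-addition statement, suppose $q \geq k - 1$. If $q \geq k$, then $o \leq k \leq q$; and if $q = k - 1$, then $o$ cannot equal $k$ because $o$ and $q$ must have the same parity while $k$ and $k-1$ differ by $1$, so $o \leq k - 1 = q$. Either way the first sub-case above applies, no size-$2$ component is split, and $w_k(G) = 0$. The main obstacle I expect is the greedy bin-loading step, which requires using (i), (ii), and (iii) in combination: (i) alone only controls the single largest component, (iii) alone cannot handle the top $k$ placements, and (ii) supplies the slack $s \geq rn$ that ultimately brings the bound on $b_j$ all the way down to $n/k$.
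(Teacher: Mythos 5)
Your proof is correct and follows essentially the same strategy as the paper: greedily distribute the components of size at least $3$ using an averaging argument that combines (i)--(iii), then balance with the small components, incurring at most one cross edge per split size-$2$ component. Your parity bookkeeping in the balancing step is in fact more explicit than the paper's, which simply asserts that the remaining at most $k-1$ vertices create at most $k/2$ cross edges.
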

\begin{proof}
Suppose that  $G$ contains $m_{ 2}$ components of size more than 2, and
enumerate them as $C_1$, $C_2$, $\ldots, C_{m_{2}}$ with sizes $c_1\geq \dots\geq c_{m_2},$ respectively (ties can be broken arbitrarily).
We construct $k$ partitions $V_1, V_2,\ldots, V_k$ sequentially as follows.
Define $V_1(1)=C_1$, and $V_i(1)=\emptyset$ for $i=1,\ldots,k$. 
For $2\leq t\leq m_2$,
$$V_i(t) = 
\begin{cases}
V_i(t-1)\cup C_t\quad\text{if}\quad i = \min\{j:|V_j(t-1)\cup C_t|\leq n/k\},\\
V_i(t-1)\quad\qquad\text{otherwise},
\end{cases}$$
i.e., sequentially at each step add all the vertices in components of size more than 2, to the partitions in a way such that the size of each partition does not exceed $n/k$.
The claim below establishes that the above steps are feasible.
\begin{claim}
For all $2\leq t\leq m_2$, $\min\{j:|V_j(t-1)\cup C_t|\leq n/k\}\leq k$.
\end{claim}
\begin{claimproof}
Note that, due to condition~(i), $|V_i(1)|\leq n/k$.
Now, if possible assume that at step $t_0\leq m_2$, $|V_j(t_0-1)\cup C_{t_0}|= |V_j(t_0-1)|+| C_{t_0}|>n/k$ for all $j=1,\ldots,k$.
Summing over $j$, we have 
\begin{align*}
\sum_{j=1}^k|V_j(t_0-1)|+k| C_{t_0}|>n
\implies  | C_{t_0}|>\frac{n}{k}-\frac{1}{k}\sum_{j=1}^k|V_j(t_0-1)|\geq \frac{n}{k} - \frac{n-rn}{k},
\end{align*}
due to condition (ii). This in turn implies $|C_{t_0}|>\frac{rn}{k},$
which contradicts condition~(iii).
\end{claimproof}  
After step $t=m_2$, we first add the components of size~2 and finally components of size~1 (the isolated vertices), if any.
Observe that  components of size~1,~2 can be added to the partitions such that each partition is of size $\lfloor n/k\rfloor$ or $\lfloor n/k\rfloor+1$, there are no cross edges between the partitions, and the number of vertices remaining to be included in any partition is at most $k-1$.
Now, if  $\#\{i:c_i = 1\} \geq k-1$, then at the last step
the remaining vertices must be isolated ones, and these do not create any cross edge, and thus the $k$-section width is exactly zero. 
Otherwise, the remaining $k-1$ vertices can form at most $k/2$ cross edges (the worst case being there are no isolated vertices).
\end{proof}

We will now verify that $\CM$ with $\eta<1/k$ satisfies all the conditions of Lemma~\ref{lem:graph-sub}, with high probability.
Condition~(i) follows from Theorem~\ref{thm:JL09}~(i) and the fact that $\eta<1/k$. 
In Lemma~\ref{lem:num-pairs} below, we will show that the number of components of size 2 scaled by~$n$, converges in probability to a positive constant, which verifies Condition~(ii).
Finally, Condition~(iii) is a consequence of Theorem~\ref{thm:JL09}~(ii).
The proof of Theorem~\ref{thm:min-bis}~\eqref{thm:min-bis-i} is now complete by Lemma~\ref{lem:graph-sub}.
\qed\\

 Recall that $n_1$ denotes the number of vertices in $\CM$ with degree one, and $n_1/n\to \PR(D=1) = p_1>0$. Suppose that the degree one vertices are indexed as $1,2,\dots,n_1$.
 We say that a \emph{pair} is created if a degree one vertex is joined with another degree one vertex. 
 Thus, the pairs are the components of size 2 in $\CM$.
\begin{lemma}
\label{lem:num-pairs} 
Let $P_n:=\sum_{1\leq i<j \leq n_1}\ind{i\leftrightsquigarrow j}$ be the number of pairs in \CM. Then, as $n\to\infty$, 
$$\frac{P_n}{n}\pto \frac{p_1^2}{2\E[D]}.$$
\end{lemma}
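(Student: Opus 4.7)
The plan is a standard second-moment argument. Since $P_n=\sum_{1\leq i<j\leq n_1}\ind{i\leftrightsquigarrow j}$ is a sum of indicators, I will compute $\E[P_n]$ and $\var(P_n)$ using the uniform-pairing description of the configuration model, and conclude via Chebyshev's inequality.

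For the first moment, each of the $n_1$ degree-one vertices carries exactly one half-edge, so for $1\leq i<j\leq n_1$ the event $\{i\leftrightsquigarrow j\}$ is precisely the event that their two half-edges are matched under the uniform pairing of the $\ell_n$ half-edges. This has probability $1/(\ell_n-1)$, giving
$$\E[P_n]=\binom{n_1}{2}\frac{1}{\ell_n-1}=\frac{n_1(n_1-1)}{2(\ell_n-1)}.$$
By Assumption~\ref{assumption1}, $n_1/n\to p_1$ and $\ell_n/n\to\E[D]$, so $\E[P_n]/n\to p_1^2/(2\E[D])$, which is strictly positive by Assumption~\ref{assumption1}(\ref{assumption1-3}).

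For the second moment, expand $\E[P_n^2]=\sum_{i<j,\,k<l}\PR(i\leftrightsquigarrow j,\,k\leftrightsquigarrow l)$. Because a degree-one vertex lies in at most one edge, the terms with $|\{i,j\}\cap\{k,l\}|=1$ vanish; the diagonal terms $\{i,j\}=\{k,l\}$ contribute $\E[P_n]$; and when all four vertices are distinct, a counting of perfect matchings of the remaining $\ell_n-4$ half-edges shows that the joint probability that both prescribed pairs are matched equals $1/[(\ell_n-1)(\ell_n-3)]$. Hence
$$\var(P_n)=\E[P_n]+\binom{n_1}{2}\binom{n_1-2}{2}\frac{1}{(\ell_n-1)(\ell_n-3)}-\binom{n_1}{2}^2\frac{1}{(\ell_n-1)^2}.$$
The two large products on the right are each $\Theta(n^2)$, and their ratio equals $1+O(1/n)$ since $\binom{n_1-2}{2}/\binom{n_1}{2}=1-O(1/n)$ and $(\ell_n-1)/(\ell_n-3)=1+O(1/n)$. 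The difference therefore collapses to $O(n)$, yielding $\var(P_n)=O(n)$. Chebyshev's inequality then gives $P_n/n-\E[P_n]/n\pto 0$, which combined with the first-moment limit completes the proof. There is no substantive obstacle beyond the bookkeeping in the variance expansion; the argument rests purely on the uniform-pairing description of $\CM$ together with Assumption~\ref{assumption1}(\ref{assumption1-1})-(\ref{assumption1-2}).
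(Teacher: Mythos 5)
Your proposal is correct and follows essentially the same route as the paper: both compute $\E[P_n]$ and $\E[P_n^2]$ from the uniform-pairing probabilities $1/(\ell_n-1)$ and $1/[(\ell_n-1)(\ell_n-3)]$, split the second moment into the diagonal and the four-distinct-indices terms, and conclude by Chebyshev. Your additional observations (that the one-common-vertex cross terms vanish and that the variance is in fact $O(n)$) are a slightly sharper bookkeeping of the same argument, not a different method.
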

\begin{proof}
 Note that, by Assumption~\ref{assumption1}
 \begin{equation}
  \frac{1}{n}\expt{P_n}= \frac{1}{n}\sum_{1\leq i<j\leq n_1}\prob{i\leftrightsquigarrow j} = \frac{1}{n}\binom{n_1}{2}\frac{1}{\ell_n-1}\to \frac{p_1^2}{2\E[D]}.
 \end{equation} Further, if $I=\{(i_1,j_1,i_2,j_2): 1\leq i_1<j_1\leq n_1, 1\leq i_2<j_2\leq n_1, i_1,i_2,j_1,j_2 \text{ are distinct}\}$, then
 \begin{equation}
 \begin{split}
  &\frac{1}{n^2}\expt{P_n^2} = \frac{1}{n^2}\bigg(\sum_{i_1,j_1,i_2,j_2\in I}\prob{i_1\leftrightsquigarrow j_1, i_2\leftrightsquigarrow j_2} +\sum_{1\leq i<j\leq n_1}\prob{i\leftrightsquigarrow j} \bigg)\\
  &= \frac{1}{n^2}\bigg(\frac{1}{(\ell_n-1)(\ell_n-3)}\binom{n_1}{2}\binom{n_1-2}{2}+\binom{n_1}{2}\frac{1}{\ell_n-1}\bigg)\longrightarrow \bigg(\frac{p_1^2}{2\E[D]}\bigg)^2.
  \end{split}
 \end{equation}
 Therefore,
 \begin{equation}
  \frac{1}{n^2}\var{P_n}\to 0,
 \end{equation} 
 and an application of Chebyshev's inequality completes the proof.
\end{proof}

\subsection{Supercritical case}
\label{ssec:ksection-supcrit}
In this subsection we prove the supercritical case of the $k$-section width stated in Theorem~\ref{thm:min-bis}~(ii).
As mentioned earlier, since $\eta>1/k$, the fraction of vertices in the largest component is more than $1/k$, with high probability.
Therefore, in any balanced $k$-partition of the graph $G$, there must exist two distinct partitions each containing an asymptotically positive proportion of vertices from the largest component.
It is thus enough to show that if the largest component is partitioned into two sets $V_1$, $V_2$, each containing a positive proportion of vertices, then  with high probability, there exist $\Theta(n)$ cross-edges between $V_1$ and $V_2$.
 The following key definition formalizes this cut-property:
\begin{defn}[$\varepsilon$-$\delta$ cut]\normalfont Given $\varepsilon, \delta>0$, an $(\varepsilon, \delta)$-cut of a graph $G=(V,E)$ is a partition of $V$ in two sets $V_1$, and $V_2$ such that $|V_1|,|V_2|> \varepsilon|V|$, and the number of edges between $V_1$ and $V_2$ is at most $\delta|V|$.
\end{defn} 
Now observe that the following proposition is enough to conclude Theorem~\ref{thm:min-bis} (ii):
\begin{proposition}\label{prop-no-e-d-cut} 
Consider $\CM$ with $\nu >1$ and satisfying {\rm Assumption~\ref{assumption1}}. For any $\varepsilon > 0$, there exists $\delta= \delta(\varepsilon) >0$ such that with high probability the giant component $\mathscr{C}_{\sss (1)}$ does not have an $(\varepsilon,\delta)$-cut. 
\end{proposition}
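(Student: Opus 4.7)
The plan is to reduce the claim to the same isoperimetric statement for the $2$-core of $\mathscr{C}_{\sss(1)}$, and then establish the latter by a first-moment calculation in the pairing model combined with a union bound over balanced bipartitions. This parallels the structural decomposition of $\mathscr{C}_{\sss(1)}$ into (i) a well-connected $2$-core carrying a positive fraction of vertices (Lemma~\ref{lem-epsilon-delta-2-core}) and (ii) a collection of thin hanging trees attached to it (Lemma~\ref{lem:small-hanging-trees}). The heuristic is that cutting $\mathscr{C}_{\sss(1)}$ into two macroscopic pieces forces a macroscopic cut in the $2$-core, which has enough ``expansion'' that any such cut must cross $\Theta(n)$ edges.

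\medskip

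\emph{Step 1: Reduction to the $2$-core.} Denote by $\mathrm{Core}$ the $2$-core of $\mathscr{C}_{\sss(1)}$ and set $n_c:=|\mathrm{Core}|$. Under Assumption~\ref{assumption1} and $\nu>1$, the cited lemmas yield $n_c \geq c\,n$ with high probability for some $c=c(\nu)>0$, together with a uniform $o(n)$ bound on the size of every hanging tree. Given a supposed $(\varepsilon,\delta)$-cut $(V_1,V_2)$ of $\mathscr{C}_{\sss(1)}$, let $W_i:=V_i\cap\mathrm{Core}$. Every edge of $\mathscr{C}_{\sss(1)}$ is either inside a single hanging tree, connects a tree to its (unique) root in $\mathrm{Core}$, or lies inside $\mathrm{Core}$; hence the number of edges of $\mathrm{Core}$ crossing $(W_1,W_2)$ is bounded by $\delta|\mathscr{C}_{\sss(1)}|$. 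Moreover, since hanging trees are small compared to $n_c$, at most $o(n_c)$ vertices need to be re-assigned to force $|W_1|,|W_2|\geq (\varepsilon/2)\,n_c$, turning $(W_1,W_2)$ into an $(\varepsilon/2,\delta')$-cut of $\mathrm{Core}$ for some $\delta'=\delta'(\delta)\to 0$ as $\delta\to 0$.

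\medskip

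\emph{Step 2: No small cuts in $\mathrm{Core}$.} Conditional on its degree sequence $\boldsymbol{d}'=(d'_v)_{v\in\mathrm{Core}}$, the $2$-core is again distributed as a configuration model, with minimum degree $\geq 2$ and total half-edge count $\ell_c=\Theta(n)$; by Assumption~\ref{assumption1}(b) the sequence $\boldsymbol{d}'$ inherits a finite second moment. For a fixed $(W_1,W_2)$ of $\mathrm{Core}$ with half-edge totals $s_i:=\sum_{v\in W_i}d'_v$, the pairing-model probability that at most $k$ edges join $W_1$ to $W_2$ equals
\[
\sum_{j=0}^{k}\binom{s_1}{j}\binom{s_2}{j}\,j!\;\frac{(s_1-j-1)!!\,(s_2-j-1)!!}{(\ell_c-1)!!},
\]
and a Stirling estimate gives an upper bound of the form $\exp(-\gamma(\varepsilon,\delta)\,n_c)$ whenever $s_1,s_2\geq c'\varepsilon n_c$ and $k\leq \delta n_c$, with $\gamma(\varepsilon,\delta)\to \infty$ as $\delta\downarrow 0$. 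A union bound over the $\leq 2^{n_c}$ possible $W_1$'s then rules out the existence of any such cut, provided $\delta$ is chosen so that $\gamma(\varepsilon,\delta)>\ln 2$.

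\medskip

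\emph{Main obstacle.} The delicate point is uniform control of the per-partition bound in the presence of variable degrees: an adversarially chosen $W_1$ could concentrate high-degree vertices on one side and distort the ratio $s_i/|W_i|$, weakening the exponent $\gamma$. I would handle this by restricting to a high-probability good event on which the degree sequence of $\mathrm{Core}$ is regular in an appropriate sense (e.g.\ truncating the maximum degree at $n^{1/2-\iota}$ and using that $\sum_{v\in W}(d'_v)^2$ is close to $|W|\,\E[(D')^2]/n_c$ for all large enough $W$), then applying the union bound only within this event. The finite-second-moment hypothesis in Assumption~\ref{assumption1}(b) is exactly what powers these truncation and concentration steps, and without it one would expect the conclusion to genuinely fail.
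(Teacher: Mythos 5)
Your Step 2 is in the same spirit as the paper's Lemma~\ref{lem-epsilon-delta-2-core} (first moment over bad bipartitions of the $2$-core, using that the $2$-core is again a configuration model given its degrees), but your Step 1 contains a genuine gap, and your Step 2 misidentifies where the difficulty lies.

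On Step 1: the claim that any $(\varepsilon,\delta)$-cut of $\mathscr{C}_{\sss (1)}$ becomes an $(\varepsilon/2,\delta')$-cut of the $2$-core after reassigning $o(n_c)$ vertices is false. One side of the cut can lie entirely in the hanging forest: an adversary may cut $\delta n$ tree--root edges, detaching $\delta n$ whole hanging trees, and if those trees together carry $\Theta(n)$ vertices this is an $(\varepsilon,\delta)$-cut that induces \emph{no} macroscopic cut of the core (so $W_2=\emptyset$ and no $o(n_c)$ reassignment fixes it). Ruling this out is exactly Lemma~\ref{lem:thin-tree} (``hanging trees are not heavy''), which is roughly half of the paper's proof and requires the uniform degree-sum bound of Lemma~\ref{lem:small-nbd} together with Lemma~\ref{lem:small-hanging-trees} (almost all of $\mathscr{C}_{\sss (1)}$ lies within bounded distance of the $2$-core), the latter resting on the branching-process coupling and the local event approximation of Section~\ref{sec:local_approx}. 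Your proposal does not address this case at all; knowing that each individual tree is $o(n)$ is not enough, since $\delta n$ trees of size $n/\log n$ would still break the argument.

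On Step 2: the exponent $\gamma(\varepsilon,\delta)$ does \emph{not} tend to infinity as $\delta\downarrow 0$. Even with $k=0$ the probability that a fixed $W_1$ with $s_1$ half-edges pairs internally is $\binom{\ell_c/2}{s_1/2}^{-1}$, which is only $\exp(-\Theta(n))$ with a bounded constant, and for small $\varepsilon$ (hence small $s_1/\ell_c$) this constant is below $\ln 2$, so a crude union bound over all $2^{n_c}$ subsets does not close. The correct comparison is $\binom{m_2}{|S(A)|/2}$ against $\binom{n_2}{|A|}$, and it succeeds only because a positive proportion of $2$-core vertices have degree at least $3$ (Claim~\ref{claim:deg-dist-2core} in the paper), giving $|S(A)|/2\geq |A|+\varepsilon_1 n/4$ on one side and hence an $\e^{\Theta(n)}$ surplus that absorbs the entropy of choosing $A$. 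If the $2$-core were a union of cycles (all degrees equal to $2$) the conclusion would be false, so this input is essential; your stated ``main obstacle'' about high-degree vertices concentrating on one side is not the issue that decides whether the union bound closes.
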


We now briefly sketch the outline of the  proof of Proposition~\ref{prop-no-e-d-cut}.
The idea was first introduced by Bollob\'as et al.~\cite{BJR07} in the context of stability of the largest connected component of inhomogeneous random graphs.
We leverage their technique for the configuration model, and in conjunction with suitable structural properties of the giant component, prove Proposition~\ref{prop-no-e-d-cut}.
The application of this technique to the configuration model poses substantial challenge due to the dependence among edges, and the methods for inhomogeneous random graphs~\cite{BJR07} or Erd\H{o}s-R\'enyi random graphs~\cite{LM01} are not directly applicable.
In this paper, we therefore present some novel arguments 
that establish the necessary structural properties for this proof technique to work.
In particular, we introduce a sequential construction
of the configuration model in Subsection~\ref{sssec:notheavy}, 
that facilitates the comparison between $\CM$ and the graph with one deleted vertex.

For any graph $G$ with vertex set $V$, define the \emph{$k$-core} to be the maximal set of vertices $V^{k}\subseteq V$, such that in the subgraph induced by $V^{k}$, each vertex has degree at least~$k$. 
Note that the $k$-core of any graph is unique, although it can possibly consist of an empty graph only.
It is worthwhile to note that the 2-core of any connected graph is also connected.
Algorithmically, the $k$-core of a graph can be obtained by sequentially deleting the vertices of degree less than $k$
along with all their incident edges, until all the vertices in the remaining graph have degree more than $k$.
Observe that, $V^{k}\supseteq V^{k+1}$, and the subgraph induced by $V\setminus V^{2}$ is a forest.
See Figure~\ref{fig:sfiga} for an instance of the 2-core of a graph and the trees hanging from it. 
Figure~\ref{fig:sfigb} visualizes the 3-core as a subset of the 2-core.
\begin{figure}
\begin{subfigure}{.5\textwidth}
  \centering
  \includegraphics[scale=.45]{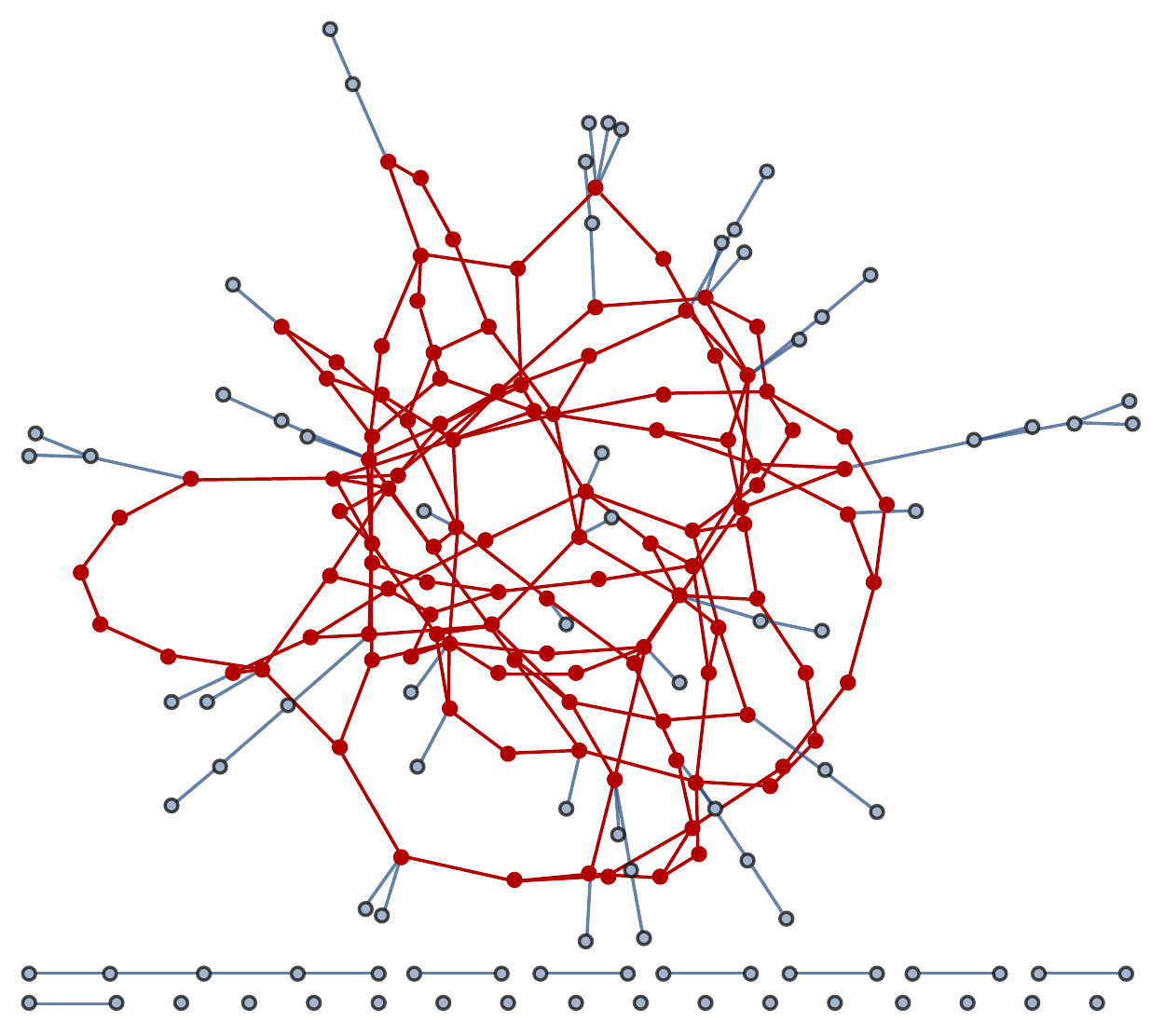}
  \caption{2-core}
  \label{fig:sfiga}
\end{subfigure}%
\begin{subfigure}{.5\textwidth}
  \centering
  \includegraphics[scale=.45]{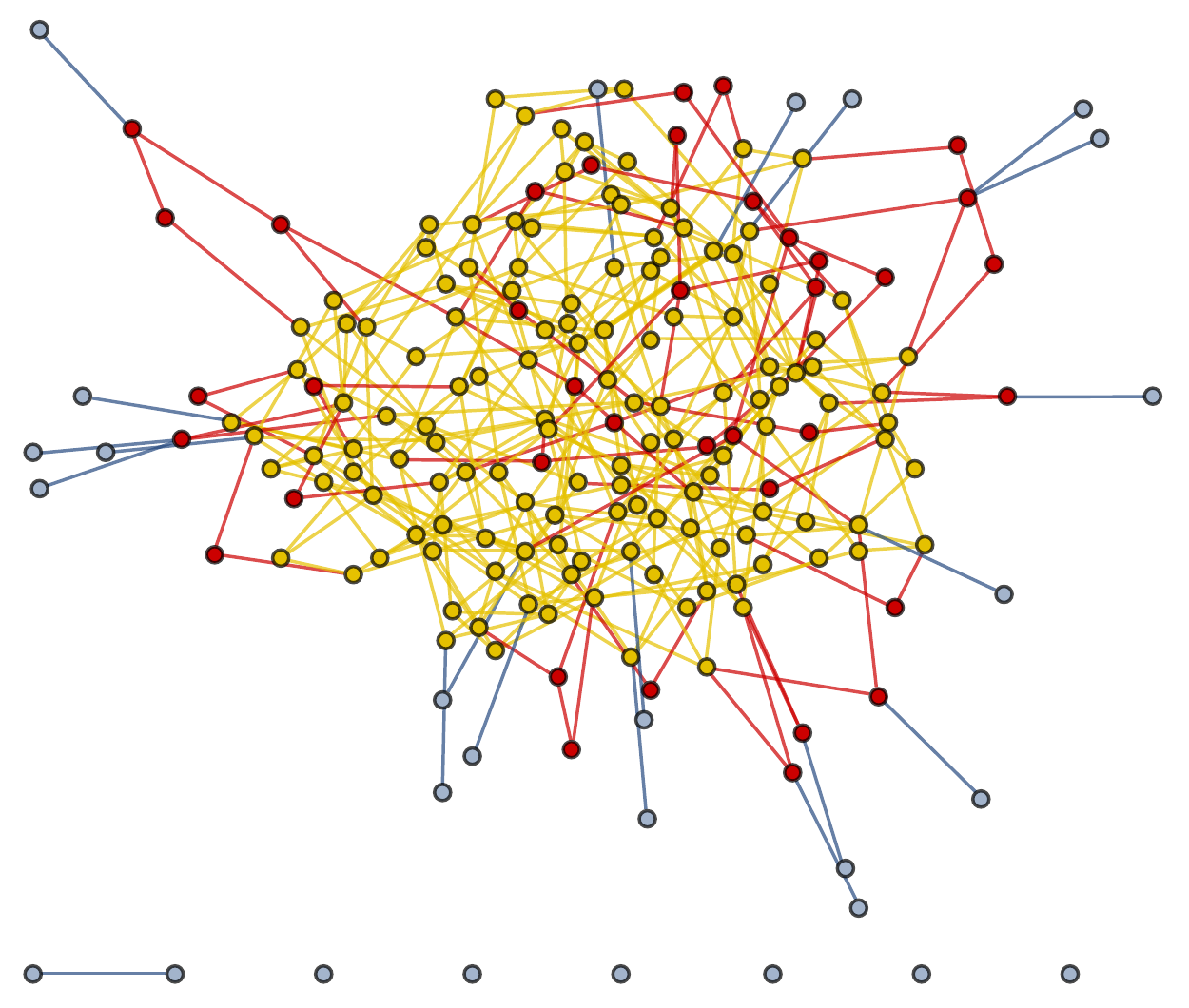}
  \caption{2-core and 3-core}
  \label{fig:sfigb}
\end{subfigure}
\caption{(a) The highlighted (red) 2-core and the trees hanging from it. (b) The yellow part highlights the 3-core, which is contained in the 2-core (union of red and yellow parts).}
\label{fig:2core}
\end{figure}

As explained above, the largest connected component $\mathscr{C}_{\sss (1)}$ of $\CM$ can be decomposed into two disjoint subsets of vertices: the 2-core $\mathscr{C}_{\sss (1)}^2$, and a forest of vertex-disjoint trees hanging from the 2-core. 
Informally speaking, the 2-core is the denser part of the graph.
Therefore, at a high level, splitting the 2-core into two parts, each containing a positive proportion of vertices, is in general \emph{costly}, and would lead to formation of a \emph{huge} number of cross edges.
Thus the optimal strategy might be to \emph{peel off} the hanging trees, since moving each hanging tree to some other partition would form precisely one cross-edge.
But in that case also, we show that the number of vertices in each of the hanging trees are 
small (essentially finite), and hence in order to move $\Theta(n)$ vertices to some other partition, $\Theta(n)$ trees must be cut, and thus, $\Theta(n)$ cross edges must be created.

To formalize the above heuristics, the proof of Proposition~\ref{prop-no-e-d-cut} breaks into two key steps, each being true with high probability:
 \begin{enumerate}[{\normalfont (i)}]
  \item The hanging trees are not heavy, in the sense that peeling off a small number of them cannot separate out a large number of vertices. This is formalized in Lemma~\ref{lem:thin-tree}.
  \item The 2-core does not have an $(\varepsilon,\delta)$-cut, which is stated in Lemma~\ref{lem-epsilon-delta-2-core}.
 \end{enumerate}
 %Most of the hanging trees are finite, i.e., by choosing $D$ large enough, the combined proportion of all the vertices lying beyond depth $D$ in any of the hanging trees can be made arbitrarily small. 
 Denote by $\mathcal{T}_h$, the set of all trees attached with the 2-core of $\mathscr{C}_{(1)}$, i.e., $T\in \mathcal{T}_h$ if and only if the subgraph in $\mathscr{C}_{(1)}$ induced by $T$ is a tree, $T\cap \mathscr{C}_{\sss (1)}^2 =\emptyset $, and there exists only one vertex $v_T\in \mathscr{C}_{\sss (1)}^2$ that shares an edge with some vertex in $T$. 
With a little abuse of notation we will write $T$ also to denote the set of vertices in $T$. 
 We always assume that each tree $T \in \mathcal{T}_h$ is rooted at the unique point $w_T$ such that $(v_T,w_T)$ is an edge and $v_T\in\mathscr{C}_{\sss (1)}^2$.
\begin{lemma}[Hanging trees are not heavy]
\label{lem:thin-tree}
For any $\varepsilon>0$, there exists $\delta=\delta_1(\varepsilon)>0$, such that with high probability, any collection $\mathcal{T}\subseteq\mathcal{T}_h$ of $\delta n$ trees contain at most $\varepsilon n$ vertices in total.
\end{lemma}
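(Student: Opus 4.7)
The plan is to reduce the claim to a tail bound on the sizes of individual hanging trees. Specifically, I would first establish: for every $\varepsilon>0$ there exists $K=K(\varepsilon)$ such that, with high probability, the total number of vertices contained in trees $T\in\mathcal{T}_h$ with $|T|\ge K$ is at most $\varepsilon n/2$. Given such a $K$, any $\mathcal{T}\subseteq \mathcal{T}_h$ with $|\mathcal{T}|\le \delta n$ splits as $\mathcal{T}_{\ge K}\sqcup \mathcal{T}_{<K}$; the first piece contributes at most $\varepsilon n/2$ vertices by the tail bound, while the second contributes at most $K\cdot \delta n$ vertices. Setting $\delta_1(\varepsilon):=\varepsilon/(2K(\varepsilon))$ then makes the total at most $\varepsilon n$, which is the claim.

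The heuristic for the tail bound is that each tree $T\in\mathcal{T}_h$ is approximately a subcritical Galton--Watson tree. With $\tilde{D}$ denoting the size-biased version of $D$ and $f(x)=\E[x^{\tilde{D}-1}]=g_D'(x)/\E[D]$, the value $\xi$ from \eqref{defn:eta} is the smallest fixed point of $f$ in $[0,1)$, and the offspring distribution $q_k=\PR(\tilde{D}-1=k)$ conditioned on total extinction has law $q_k\xi^{k-1}$ with mean $f'(\xi)$. Convexity of $f$ combined with $f'(1)=\nu>1$ forces $f'(\xi)<1$, so the conditioned branching process is subcritical; its total progeny $T^\star$ satisfies $\E[T^\star]<\infty$, and hence $\E[T^\star\ind{T^\star\ge K}]\to 0$ as $K\to\infty$. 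A hanging tree in $\mathcal{T}_h$ is precisely the subtree seen from a 2-core vertex along a half-edge whose exploration fails to return to the 2-core, which matches exactly the Galton--Watson tree conditioned on extinction described above.

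To turn this heuristic into a proof I would use the sequential construction of $\CM$ from Subsection~\ref{sssec:notheavy}, combined with the peeling algorithm that identifies the 2-core of $\CM$ by iteratively removing degree-one vertices together with their incident edges. Conditioned on the half-edges remaining at each step, the residual pairings are uniform, so exploring a single hanging tree can be stochastically dominated by the subcritical branching process described above. A first-moment computation then yields
$$
\E\Big[\sum_{T\in\mathcal{T}_h,\,|T|\ge K}|T|\Big]\;\le\; C\,n\,\E[T^\star\ind{T^\star\ge K}]
$$
for some constant $C>0$ depending on $\E[D]$ and $\xi$, and the right-hand side is $o(n)$ as $K\to\infty$. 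Markov's inequality then converts this estimate into the desired high-probability bound.

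The main obstacle will be controlling the coupling between hanging-tree explorations and the subcritical branching process despite the dependence induced by half-edge pairings. The relevant saving is that the half-edges used during a single tree exploration form a negligible fraction of the total, so the residual degree distribution stays close to the initial one throughout; quantifying this perturbation uniformly across the $\Theta(n)$ many exploration starts is exactly what the sequential construction of Subsection~\ref{sssec:notheavy} is designed to achieve. A secondary technical point is separating the hanging trees of $\sCL$ from small tree-components of $\CM$, which can be handled using Theorem~\ref{thm:JL09}(ii) and the fact that the 2-core of any connected component is itself connected.
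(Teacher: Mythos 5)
Your top-level reduction is valid and genuinely different from the paper's: you truncate by \emph{tree size} (trees of size $\ge K$ handled probabilistically, trees of size $<K$ handled by the trivial count $K\delta n$), whereas the paper truncates by \emph{depth below the 2-core}: vertices within distance $r$ of the $\delta n$ roots $w_T$ are controlled by the purely deterministic degree-sum bound of Lemma~\ref{lem:small-nbd}, and only the vertices at depth $>r$ require a probabilistic input (Lemma~\ref{lem:small-hanging-trees}). The paper's choice is deliberate: the event ``$V_n$ is within distance $r$ of $\mathscr{C}_{\sss (1)}^2$'' can be approximated by the genuinely local event $\mathrm{LTC}_r(V_n,L_n)$ for \emph{fixed} $r$, which is what makes the first- and second-moment computations in Lemma~\ref{lem:small-hanging-trees} go through. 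Your route instead requires controlling the \emph{size} of the whole hanging tree containing a vertex, which is not a depth-$r$ local event for any fixed $r$ (the tree can extend arbitrarily far from $V_n$ towards the 2-core), so you lose the fixed-radius localization that the paper's entire Section~\ref{sec:local_approx} machinery is built around.

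The genuine gap is in the tail bound itself, specifically in the step ``exploring a single hanging tree can be stochastically dominated by the subcritical branching process.'' Membership of a tree in $\mathcal{T}_h$ is determined by $\mathscr{C}_{\sss (1)}^2$, a global object: the exploration from a 2-core half-edge is a priori the \emph{supercritical} size-biased exploration, and it only becomes the dual (offspring law tilted by $\xi^{k}$, mean $f'(\xi)<1$) after conditioning on the non-local event that the exploration never reconnects to the 2-core. Conditioning on such an event is not monotone in any obvious sense, so stochastic domination by $T^\star$ is an assertion, not a consequence; proving that the finite-$n$ conditioned exploration converges to the dual law (uniformly over $\Theta(n)$ exploration starts, with the trees sharing the half-edge pool) is essentially the same difficulty that the paper's $\mathrm{TC}_r$ versus $\mathrm{LTC}_r$ analysis resolves, and your sketch relocates it rather than resolving it. Relatedly, Algorithm~\ref{algo-CM} is not the tool for this: it is designed to compare $\CM$ with the graph in which one vertex is yet to be formed (needed for the $\mathrm{LTC}\setminus\mathrm{TC}$ direction of Lemma~\ref{lem:loc-event-approx}), and it does not by itself identify the 2-core or decouple the hanging-tree explorations. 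To complete your argument you would need either (a) a rigorous local-limit statement for the pair (tree size, attachment to the 2-core), with a variance bound to upgrade the first moment to a high-probability statement, or (b) a reduction to the components of the full peeled forest on $[n]\setminus V^2$ (every $T\in\mathcal{T}_h$ is such a component, so this over-counts harmlessly) together with a direct configuration-model computation of the expected number of vertices in peeled-forest components of size $\ge K$; either way the missing estimate is comparable in weight to Lemma~\ref{lem:small-hanging-trees}.
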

\begin{lemma}[No $(\varepsilon,\delta)$-cut in the 2-core]\label{lem-epsilon-delta-2-core}
For any $\varepsilon>0$, there exists $\delta = \delta_2(\varepsilon) > 0$ such that with high probability,
$\mathscr{C}_{\sss (1)}^2 $ does not have any $(\varepsilon, \delta)$-cut.
\end{lemma}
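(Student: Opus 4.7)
The plan is to combine a structural description of the 2-core as a configuration model with a Bollob\'as-Janson-Riordan style first-moment/union-bound argument. By the 2-core asymptotics of Janson and Luczak \cite{JL07,JL09}, on a high-probability event $\mathscr{C}_{\sss(1)}^2$ has $n^*=\Theta(n)$ vertices and $\ell^*=\Theta(n)$ half-edges, its empirical degree distribution converges to an explicit limit $D^*$ on $\{2,3,\dots\}$ with $\nu^*:=\E[D^*(D^*-1)]/\E[D^*]>1$, and, conditional on the internal degree sequence $\boldsymbol{d}^*=(d_v^*)_{v\in V^*}$, the 2-core is itself distributed as $\mathrm{CM}_{n^*}(\boldsymbol{d}^*)$. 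Working on this event reduces Lemma~\ref{lem-epsilon-delta-2-core} to showing that a configuration model with minimum degree at least $2$ and supercritical branching ratio has no $(\varepsilon,\delta)$-cut with high probability.

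For this reduced problem, fix a candidate partition $V^*=V_1\sqcup V_2$ with $|V_i|\geq \varepsilon n^*$, and set $s_i:=\sum_{v\in V_i} d_v^*$; the minimum-degree-$2$ condition forces $s_i\geq 2\varepsilon n^*$, while the moment assumptions give $s_i\leq C n^*$. Realizing $\mathrm{CM}_{n^*}(\boldsymbol{d}^*)$ as a uniformly random perfect matching of the $\ell^*$ half-edges yields
$$
\PR\bigl(E(V_1,V_2)=j\bigr)=\frac{\binom{s_1}{j}\binom{s_2}{j}\, j!\,(s_1-j-1)!!\,(s_2-j-1)!!}{(\ell^*-1)!!}.
$$
A Stirling analysis of the right-hand side shows that $E(V_1,V_2)$ is concentrated around $s_1 s_2/\ell^*=\Theta(n^*)$ and that
$$
\sum_{j\leq \delta n^*}\PR\bigl(E(V_1,V_2)=j\bigr)\leq \exp\bigl(-c(\delta)\,n^*\bigr),
$$
where the rate $c(\delta)$ depends only on the ratios $s_i/\ell^*$ (which lie in a compact sub-interval of $(0,1)$) and satisfies $c(\delta)\to\infty$ as $\delta\to 0$.

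The last step is a union bound over candidate cuts. The number of partitions with $|V_1|=\alpha n^*$ is at most $\binom{n^*}{\alpha n^*}\leq \exp(n^* H(\alpha))$, and $H(\alpha)\leq \ln 2$ uniformly for $\alpha\in[\varepsilon, 1/2]$. Choosing $\delta_2=\delta_2(\varepsilon)$ small enough that $c(\delta_2)>\ln 2$, followed by a discretization of $\alpha$ and of the pair $(s_1,s_2)$ at scale $1/n$, yields the conclusion via Markov's inequality.

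The principal obstacle is the structural reduction in the first paragraph: the 2-core is obtained by a non-trivial peeling procedure, so $(n^*,\ell^*,\boldsymbol{d}^*)$ is itself random and strongly coupled to the entire configuration model. I would handle this by restricting to the high-probability event on which these parameters are within $o(n)$ of their deterministic limits, on which both the conditional "$\mathrm{CM}$ on the 2-core" representation and the uniformity of the Stirling rate $c(\delta)$ (over the allowed range of stub ratios) are valid. Once this conditioning is in place, the remaining probabilistic estimates are routine.
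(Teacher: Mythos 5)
Your overall strategy---condition on the internal degree sequence of $\mathscr{C}_{\sss (1)}^2$, use that the 2-core is then itself a configuration model, and run a first-moment/union-bound over partitions---is exactly the route the paper takes. However, there is a genuine gap in the way you close the union bound. The rate $c(\delta)$ does \emph{not} tend to infinity as $\delta\to 0$: the probability that a fixed partition with stub counts $(s_1,s_2)=(x\ell^*,(1-x)\ell^*)$ has \emph{zero} cross edges is of order $\binom{\ell^*/2}{s_1/2}^{-1}\approx \exp\bigl(-\tfrac{\ell^*}{2}H(x)\bigr)$, so $c(\delta)\leq c(0)=\tfrac{\ell^*}{2n^*}H(x)<\infty$. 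Meanwhile the number of partitions with $|V_1|=\alpha n^*$ is $\approx\exp(n^*H(\alpha))$. When the 2-core is dominated by degree-2 vertices (which happens as $\nu\downarrow 1$), one has $\ell^*\approx 2n^*$ and $x\approx\alpha$ for every partition, so the two exponentials cancel \emph{exactly} and no choice of $\delta$ makes $c(\delta)>H(\alpha)$. This is not a technicality: a configuration model in which \emph{all} degrees equal $2$ is a union of cycles whose largest component is a $\Theta(n)$-cycle, which admits an $(\varepsilon,2/n)$-cut---so any proof must use more than ``minimum degree $2$ and $\nu^*>1$ as stated.''

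The missing ingredient, which is the heart of the paper's argument, is a quantitative use of the degree-$\geq 3$ vertices. The paper first shows (Claim~\ref{claim:deg-dist-2core}, via the local approximation machinery of Section~\ref{sec:local_approx}) that a positive proportion $\varepsilon_1$ of the 2-core vertices have degree $3$; supercriticality together with Assumption~\ref{assumption1}.\ref{assumption1-3} guarantees $\PR(D\geq 3)>0$. Then for any partition $A,\bar A$ with $|A|,|\bar A|\geq\varepsilon n$, one of the two parts contains at least $\varepsilon_1 n/2$ degree-3 vertices, which forces either $|S(A)|/2\geq |A|+\varepsilon_1 n/4$ or $m_2-|S(A)|/2\geq|\bar A|+\varepsilon_1 n/4$, and hence $\binom{m_2}{|S|/2}\geq \e^{3an}\binom{n_2}{|A|}$ for some $a>0$. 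It is this strict exponential gap between the matching probability and the partition entropy---not a divergence of the rate as $\delta\to 0$---that absorbs both the $\binom{n_2}{|A|}$ choices of $A$ and the $\binom{2m_2}{\delta n}$ choices of the uncut half-edges. You would need to insert this step (and the supporting degree-distribution claim for the 2-core) for your argument to go through.
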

The proof of the above two lemmas are rather technical, and are provided at the end of the subsection. 
Now we prove Proposition~\ref{prop-no-e-d-cut} using Lemmas~\ref{lem:thin-tree} and~\ref{lem-epsilon-delta-2-core}.
In Figure~\ref{fig:structure} we provide a schematic diagram for the structure of the proof of Proposition~\ref{prop-no-e-d-cut} and the interdependence of different intermediate lemmas.

\begin{figure}
\begin{center}
\begin{tikzpicture}[line cap=round,line join=round,>=triangle 45,x=1.5cm,y=1.5cm, scale = .73]
 \node (A) at (0,-0.35) [draw,align = center, text width=4cm] {No $(\varepsilon,\delta)$-cut in $\mathscr{C}_{\sss (1)}$\\ Proposition~\ref{prop-no-e-d-cut}};
 \node (B) at (-2.5,-2.5) [draw,align = center, text width=3cm] {Hanging trees are not heavy \\ Lemma~\ref{lem:thin-tree}};
\node (C) at (2.5,-2.5) [draw,align = center, text width=3cm] {No $(\varepsilon,\delta)$-cut in 
the 2-core of $\mathscr{C}_{\sss (1)}$ \\ Lemma~\ref{lem-epsilon-delta-2-core}};
\node (D) at (-5,-5) [draw,align = center, text width=3cm] {Neighborhood of small no.~of vertices is small \\ Lemma~\ref{lem:small-nbd}};
\node (E) at (0,-5) [draw,align = center, text width=3cm] {The depth of the hanging trees are finite \\ Lemma~\ref{lem:small-hanging-trees}};
\node (F) at (-3.5,-7.75) [draw,align = center, text width=5cm] {Small no.~of vertices in intermediate components\\
Lemma~\ref{lem:small-no-giant}};
\node (G) at (5,-5) [draw,align = center, text width=3cm] {Local event approx.~of typical neighborhoods.\\ Lemma~\ref{lem:loc-event-approx}};
\node (H) at (3.5,-7.75) [draw,align = center, text width=5cm] {No cycles of short length in typical neighborhoods\\
Claim~\ref{claim:not-close-short-cycle}};

\draw[-] (B)--(C);
\draw[->] ($(B)!0.5!(C)$)--(A);

\draw[-] (D)-- (E);
\draw[->] ($(D)!0.5!(E)$)--(B);

\draw[->] (G)--(E);

\draw[-] (F)--(H);
\draw[->] (5,-6.5)--(G);
\draw[-] ($(F)!0.5!(H)$)--(0,-6.5);
%\draw[-] (5,-6.375)--(0,-6.375);
\draw[-] (5,-6.5)--(0,-6.5);

\draw[->] (5,-2.5)--(C);
\draw[-] (5,-2.5)--(G);
 \end{tikzpicture}
\end{center}
\caption{Proof structure and interdependence of different lemmas.}
\label{fig:structure}
\end{figure}
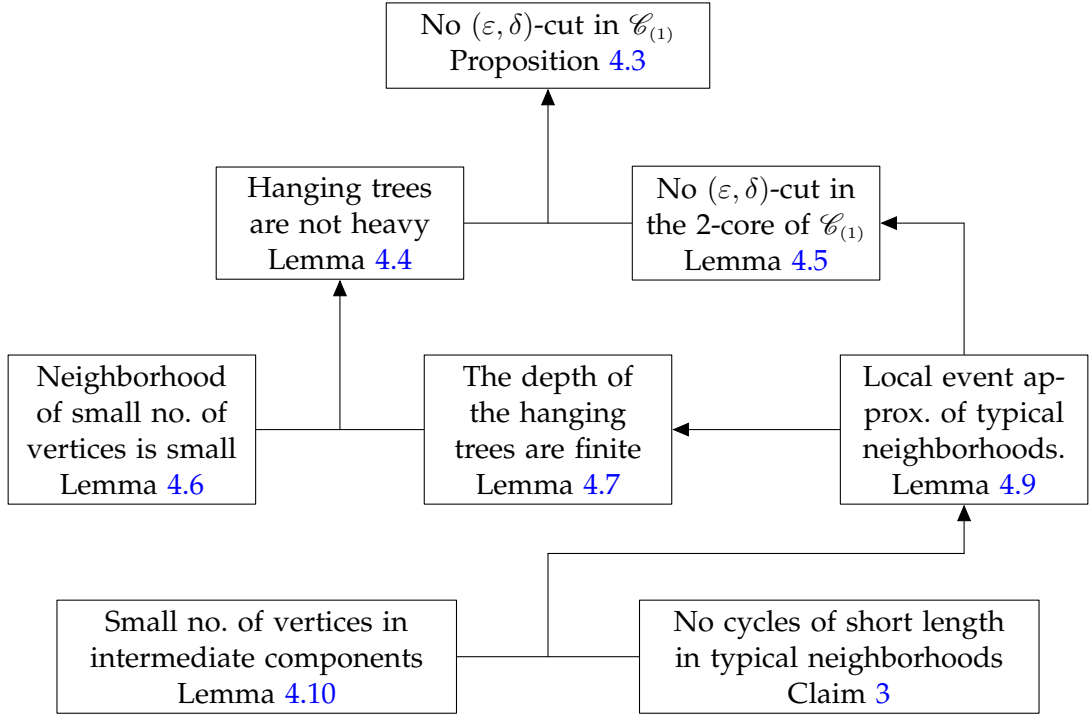

\begin{proof}[Proof of Proposition~\ref{prop-no-e-d-cut}]
Fix any $\varepsilon>0$.
Choose $\delta = \min\{\delta_1(\varepsilon/2), \delta_2(\varepsilon/2)\}$, where $\delta_1(\varepsilon)$ and $\delta_2(\varepsilon)$ are as in Lemmas~\ref{lem:thin-tree} and~\ref{lem-epsilon-delta-2-core}, respectively.

We now claim that for this choice of $\delta$, there is no $(\varepsilon,\delta)$-cut in $\mathscr{C}_{\sss (1)}$.
Indeed, existence of an $(\varepsilon,\delta)$-cut in $\mathscr{C}_{\sss (1)}$ implies that there exists $\delta n$ edges, whose removal splits $\mathscr{C}_{\sss (1)}$ into two parts, both containing at least $\varepsilon n$ vertices.
Observe that due to the choice of $\delta$, removal of any set of $\delta n$
edges can separate out at most $\varepsilon n/2$ vertices belonging to $\cup_{T\in\mathcal{T}_h}\{T\}$, and at most $\varepsilon n/2$ vertices belonging to $\mathscr{C}_{\sss (1)}^2 $  with high probability, and the proof is complete.
\end{proof}

\subsubsection{Hanging trees are not heavy}\label{sssec:notheavy} 
\begin{proof}[Proof of Lemma~\ref{lem:thin-tree}]
The proof consists of two main steps. 
The first step establishes a property of the underlying degree sequence, which states that the sum of the degrees of  `small' number of vertices is `small'.
\begin{lemma}
\label{lem:small-nbd} 
Under Assumption~\ref{assumption1}.\ref{assumption1-2}, given any $\varepsilon, r >0$, there exists $\delta = \delta(\varepsilon, r) >0$, such that for all sufficiently large $n$, the sum of degrees of the $r$-neighborhood of \emph{any} $\delta n$ vertices is at most~$\varepsilon n$, i.e. $\sum_{u\in\mathcal{N}[U,r]}d_i<\varepsilon n$ uniformly over all subsets $U\subseteq [n]$ such that $|U|< \delta n$.
\end{lemma}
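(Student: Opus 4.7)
The plan is to observe that this statement is a purely deterministic property of the degree sequence (no use of the random pairing is required), and to prove it by induction on $r$, with the second-moment convergence in Assumption~\ref{assumption1} providing the single key ingredient: uniform integrability of the degrees. Since $\sup_n \E[D_n^2] < \infty$, the elementary bound
$$\sup_n \frac{1}{n}\sum_{i \,:\, d_i > M} d_i \;\leq\; \sup_n \frac{1}{nM}\sum_{i\,:\,d_i>M} d_i^2 \;\leq\; \frac{\sup_n \E[D_n^2]}{M} \;\longrightarrow\; 0 \quad \text{as } M \to \infty$$
immediately yields that the degree mass carried by very high-degree vertices is uniformly negligible.

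For the base case $r=0$, given $\varepsilon > 0$ I would choose $M = M(\varepsilon)$ large enough that $\frac{1}{n}\sum_{i\,:\, d_i > M} d_i < \varepsilon/2$ for all sufficiently large $n$, and split any $U$ with $|U| < \delta n$ according to whether $d_v \leq M$:
$$\sum_{v \in U} d_v \;\leq\; M|U| + \sum_{i \,:\, d_i > M} d_i \;<\; M\delta n + \frac{\varepsilon n}{2},$$
so the choice $\delta = \varepsilon/(2M)$ does the job. For the inductive step I would exploit the identity $\mathcal{N}[U,r] = \mathcal{N}[\mathcal{N}[U,1],r-1]$ together with the trivial counting inequality
$$|\mathcal{N}[U,1]| \;\leq\; |U| + \sum_{v \in U} d_v,$$
which holds because every vertex adjacent to $U$ is reached via at least one half-edge incident to $U$. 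The inductive hypothesis delivers a $\delta_{r-1}$ such that $|U'| < \delta_{r-1} n$ forces $\sum_{u \in \mathcal{N}[U',r-1]} d_u < \varepsilon n$; applying the base case with tolerance $\delta_{r-1}/2$ then produces a $\delta > 0$ so that $|U| < \delta n$ simultaneously forces $|U| \leq \delta_{r-1} n / 2$ and $\sum_{v \in U} d_v \leq \delta_{r-1} n/2$. Consequently $|\mathcal{N}[U,1]| < \delta_{r-1} n$, and invoking the inductive hypothesis on $\mathcal{N}[U,1]$ closes the argument.

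I do not anticipate any serious obstacle: the entire argument reduces to iterated use of the uniform integrability furnished by the moment assumption. The only mild care needed is to track how the constants shrink across the $r$ induction steps, which is automatic via the nested application of the base case. The deterministic character of the lemma --- it depends only on the degree sequence and not at all on the random matching --- is precisely what will make it a convenient building block for the subsequent structural analysis of hanging trees in Lemma~\ref{lem:thin-tree}, where it will be applied repeatedly to the set of roots and their neighborhoods.
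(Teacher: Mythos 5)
Your proposal is correct and follows essentially the same route as the paper: a deterministic induction on $r$ whose only input is the uniform integrability of the degrees supplied by the convergence of $\E[D_n^2]$, with the base case obtained by truncating at a large degree threshold and the inductive step passing from a small degree-sum to a small neighborhood size. If anything, your explicit use of $|\mathcal{N}[U,1]|\leq |U|+\sum_{v\in U}d_v$ makes the step from ``small degree sum'' to ``small neighborhood'' slightly more transparent than the paper's terse version of the same argument.
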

In the second step we show that $r$ can be chosen large enough, so that with high probability, the total number of vertices at depth more than $r$ in all hanging trees combined, is arbitrarily `small'. This is formalized in Lemma~\ref{lem:small-hanging-trees}. 
\begin{lemma}
\label{lem:small-hanging-trees} 
For any $\varepsilon>0$, there exists $r=r(\varepsilon)>0$  such that 
with high probability 
$$\left|\mathscr{C}_{\sss (1)}\setminus\mathcal{N}[\mathscr{C}_{\sss (1)}^2,r]\right|<\varepsilon n.$$
\end{lemma}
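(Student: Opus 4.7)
The plan is to combine the local weak convergence of $\CM$ to a Galton--Watson tree (Lemma~\ref{lem:loc-event-approx}) with a subcritical branching-process computation, establishing that the probability that a typical vertex lies at distance more than $r$ from the 2-core decays geometrically in $r$.

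First, I would reduce the global event $\{v\in \mathscr{C}_{\sss (1)}\setminus \mathcal{N}[\mathscr{C}_{\sss (1)}^2,r]\}$ to a local event $E_r(v)$ depending only on the $r$-neighborhood of $v$ in $\CM$. Indeed, if $v$ is at distance more than $r$ from the 2-core then no 2-core vertex lies in $\mathcal N[\{v\},r]$, and since 2-core vertices lie on cycles this forces $\mathcal N[\{v\},r]$ to be acyclic. By Claim~\ref{claim:not-close-short-cycle}, this tree-like regime is the typical one except on an exceptional set of size $\oP(n)$, whose neighborhoods carry only $\oP(n)$ vertices by Lemma~\ref{lem:small-nbd}. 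On the good set, the BFS exploration of depth $r$ from $v$ already encodes the event, so $\1[v\in\mathscr{C}_{\sss (1)}\setminus\mathcal{N}[\mathscr{C}_{\sss (1)}^2,r]]$ is dominated by $\1[E_r(v)]$, where $E_r(v)$ records the shape of that BFS tree (a tree with exactly one ``live'' branch extending to depth~$r$).

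Next, I would apply Lemma~\ref{lem:loc-event-approx} to transfer the empirical frequency of $E_r$ to the corresponding Galton--Watson probability $\pi_r$. In the local limit---a tree with root offspring distributed as $D$ and dual offspring generating function $h(x):=g_D'(x)/\mu$ at each subsequent level---$\pi_r$ equals the probability that exactly one of the root's $D$ child-subtrees is infinite and that, along the resulting infinite ray, each of the next $r$ vertices has, conditionally on its subtree being infinite, exactly one further infinite descendant. A routine computation using $h(\xi)=\xi$ identifies the latter conditional probability as $q:=h'(\xi)=g_D''(\xi)/\mu$. Strict convexity of $h$ together with $h(\xi)=\xi$, $h(1)=1$, and $\xi<1$ then gives $h'(\xi)<1$---the dual branching process conditioned on extinction is subcritical---so $\pi_r\leq C q^r\to 0$ as $r\to\infty$.

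Combining the two steps, I obtain $n^{-1}\bigl|\mathscr{C}_{\sss (1)}\setminus \mathcal N[\mathscr{C}_{\sss (1)}^2,r]\bigr|\leq \pi_r+\oP(1)$; choosing $r=r(\varepsilon)$ with $C q^r<\varepsilon/2$ then yields the lemma with high probability. The hard part will be the localisation step: the event ``$v$ is at distance more than $r$ from the 2-core'' is intrinsically global, and extracting from it a genuinely local event on the $r$-neighborhood requires ruling out long cycles that could covertly link $v$ to distant parts of the 2-core. This is exactly where the preparatory Lemma~\ref{lem:small-nbd} and Claim~\ref{claim:not-close-short-cycle} enter, and where the bulk of the technical work lies.
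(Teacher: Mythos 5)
Your overall strategy --- localize, couple with the Galton--Watson limit, and show that the probability of ``surviving with a bare backbone for $r$ generations'' decays geometrically in $r$ --- is the same as the paper's, and your branching-process computation is correct and in fact more explicit than the paper's: the paper merely asserts a rate $c<1$, while your identification of the rate as $h'(\xi)=g_D''(\xi)/\mu<1$ via convexity of $h$ and $h(\xi)=\xi$, $h(1)=1$, $\xi<1$ is right. However, there are two concrete gaps.

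First, the claimed domination $\1[v\in\mathscr{C}_{\sss (1)}\setminus\mathcal{N}[\mathscr{C}_{\sss (1)}^2,r]]\le\1[E_r(v)]$, with $E_r(v)$ a function of the depth-$r$ BFS tree, is false. A vertex $v$ at distance $r+1$ from the 2-core inside a hanging tree may have, besides the branch leading towards the 2-core, a \emph{finite} descendant subtree of depth at least $r$; then several branches of its depth-$r$ BFS tree reach depth $r$, yet $v\in\mathscr{C}_{\sss (1)}\setminus\mathcal{N}[\mathscr{C}_{\sss (1)}^2,r]$. If instead ``live'' is meant as ``continues beyond depth $r$'', the event is no longer measurable with respect to the $r$-neighborhood, which defeats the purpose of the reduction. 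This is exactly why the paper does not work with a depth-$r$ event: it uses $\mathrm{LTC}_r(v,L_n)$ with $L_n\to\infty$, i.e.\ two disjoint paths of \emph{diverging} length emanating from a vertex within distance $r$ of $v$, and Lemma~\ref{lem:loc-event-approx} (whose proof in Section~\ref{sec:local_approx} needs not only Claim~\ref{claim:not-close-short-cycle} for one inclusion but also the intermediate-components bound Lemma~\ref{lem:small-no-giant} and the sequential construction Algorithm~\ref{algo-CM} for the reverse one) is precisely the statement that this $(r+L_n)$-local event captures $\mathrm{TC}_r$. You may cite Lemma~\ref{lem:loc-event-approx} as a black box, as the paper does here, but then your local event must be the complement-within-survival of $\mathrm{LTC}_r(v,L_n)$, not a depth-$r$ event.

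Second, you pass from the typical-vertex probability $\pi_r$ to $n^{-1}\left|\mathscr{C}_{\sss (1)}\setminus\mathcal{N}[\mathscr{C}_{\sss (1)}^2,r]\right|\le\pi_r+\oP(1)$ without justification. The local limit only gives convergence of the \emph{expected} fraction; Markov's inequality then bounds $\PR(\cdot>\varepsilon n)$ by a quantity that is small but not $o(1)$ for fixed $r$, so the choice of $r$ would have to depend on the target error probability as well as on $\varepsilon$. The paper closes this by a second-moment argument: it takes two independent typical vertices $V_n,W_n$, couples their neighborhoods with two independent copies of $\mathcal{X}$ via Proposition~\ref{prop-coupling}, deduces $\var{|\mathcal{N}[\mathscr{C}_{\sss (1)}^2,r]|}=o(n^2)$, and then combines Chebyshev's inequality with Theorem~\ref{thm:JL09} to get the in-probability statement with $r$ depending only on $\varepsilon$. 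Your write-up needs this step (or an equivalent concentration argument) to be complete.
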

\noindent
Given Lemmas~\ref{lem:small-nbd} and~\ref{lem:small-hanging-trees}, the proof of Lemma~\ref{lem:thin-tree} can now be completed. 
Consider the following equivalent re-statement of  Lemma~\ref{lem:thin-tree}:
\begin{quotation}
\noindent
For any $\varepsilon, \beta>0$, there exists $\delta = \delta(\varepsilon)>0$ and $n_0= n_0(\varepsilon, \beta)$, for which,
 the probability that there exists a subset $\mathcal{T}\subseteq\mathcal{T}_h$ with $|\mathcal{T}|<\delta n$ and $|\bigcup_{T\in\mathcal{T}}\{T\}|\geq \varepsilon n$, is at most $\beta$ for all $n\geq n_0$.
\end{quotation}
To show the above statement, fix any $\varepsilon, \beta>0$. 
Using Lemma~\ref{lem:small-hanging-trees}, choose $r=r(\varepsilon/2)$ and $n_1 = n_1(\varepsilon/2,\beta)$, such that for all $n\geq n_1$, 
$$\prob{\left|\mathscr{C}_{\sss (1)}\setminus\mathcal{N}[\mathscr{C}_{\sss (1)}^2,r]\right|\geq\frac{\varepsilon n}{2}}<\beta.$$
Also, appealing to Lemma~\ref{lem:small-nbd}, we choose $\delta = \delta(\varepsilon/2, r)$ and $n_2 = n_2(\varepsilon,r)$, 
such that for all $n\geq n_2$, $\nexists\ U\subseteq V$ with $|U|< \delta n$ and
$\sum_{i\in\mathcal{N}[U,r]}d_i\geq \varepsilon n/2$. 
Now observe that if there exists a subset $\mathcal{T}\subseteq\mathcal{T}_h$ with $|\mathcal{T}|<\delta n$ and $|\bigcup_{T\in\mathcal{T}}\{T\}|\geq \varepsilon n$, then 
$$\text{either}\quad\bigg|\bigcup_{T\in\mathcal{T}}\mathcal{N}[w_T,r]\bigg|>\frac{\varepsilon n}{2} \quad \text{or}\quad 
\bigg|\bigcup_{T\in\mathcal{T}}\{T\}\setminus \bigcup_{T\in\mathcal{T}}\mathcal{N}[w_T,r]\bigg|>\frac{\varepsilon n}{2},$$
where $w_T$ is the unique vertex in $T$ that has a neighboring vertex in $\mathscr{C}_{\sss (1)}^2$.
Choose $n_0 = \max\{n_1,n_2\}$ so that, for all $n\geq n_0$, the probability of the first event is 0, and that of the latter event is at most $\beta$, which concludes the proof.
\end{proof}

It remains to prove Lemmas~\ref{lem:small-nbd} and~\ref{lem:small-hanging-trees}. We start with Lemma \ref{lem:small-nbd}. 
\begin{proof}[Proof of Lemma~\ref{lem:small-nbd}]
Fix any $\varepsilon>0$.
We first verify the case when $r=1$ and prove this lemma by induction.
Due to Assumption \ref{assumption1}\ref{assumption1-2}, $K=K(\varepsilon)>0$ can be chosen such that for all sufficiently large $n$,
 \begin{equation}
 \frac{1}{n}\sum_{i\in [n]}d_i\ind{d_i>K} < \frac{\varepsilon}{2}.
 \end{equation} 
 Take $\delta = \varepsilon/ (2K)$, and fix any $V\subseteq [n]$ with $|V|< \delta n$. Then,
 \begin{equation}
  \frac{1}{n}\sum_{i\in V}d_i \leq \frac{1}{n} \sum_{i\in V}d_i\ind{d_i\leq K}+ \frac{1}{n}\sum_{i\in [n]}d_i\ind{d_i>K} < \varepsilon.
 \end{equation}
Now suppose that Lemma~\ref{lem:small-nbd} holds for some $r>0$.
Choose $\delta_1=\delta(\varepsilon,1)$, and $\delta = \delta(\delta_1,r)$. 
Notice that for any $U\subseteq [n]$ with $|U|<\delta n$,
 $|\mathcal{N}[U,r]|<\delta_1 n$, and thus, 
 $\sum_{i\in \mathcal{N}[U,r+1]}d_i=\sum_{i\in \mathcal{N}[\mathcal{N}[U,r],1]}d_i <\varepsilon n $.
\end{proof}
To prove Lemma \ref{lem:small-hanging-trees} we require a detailed understanding of the local neighborhood structure of $\CM$. 
For the ease of readability, we start with a heuristic road-map of the arguments. 
Observe that for any fixed $r>0$, and given any random observation $G$ of $\CM$, $\left|\mathscr{C}_{\sss (1)}\setminus\mathcal{N}[\mathscr{C}_{\sss (1)}^2,r]\right| = n\prob{V_n\in \mathscr{C}_{\sss (1)}\setminus\mathcal{N}[\mathscr{C}_{\sss (1)}^2,r]\given \CM=G}$, where we recall that $V_n$ denotes a typical vertex.
Therefore, it is enough to show that for any $\varepsilon>0$,
$r= r(\varepsilon)$ can be chosen large enough, such that 
$$\prob{V_n\in \mathscr{C}_{\sss (1)}\setminus\mathcal{N}[\mathscr{C}_{\sss (1)}^2,r]\given \CM= G}\pto \varepsilon'<\varepsilon, \quad \text{as}\quad n\to\infty.$$
However, it is challenging to obtain the latter probability.
For this reason, we will use the local event approximation technique, a key element in the study of sparse random graphs \cite{RGCN1,RGCN2,JLR00,BJM07,J08,BJR07}. 
In particular, our results for the configuration model mirror the ones proved in~\cite{BJR07} in the context of inhomogeneous random graphs.
Roughly speaking, the crucial idea is based upon two observations:
\begin{enumerate}[{\normalfont (i)}]
\item  The local neighborhood of a typical vertex resembles a branching process,
i.e., with high probability, the breadth-first-search (BFS) exploration starting from $V_n$ up to suitable depth can be coupled with a branching process. This is formally stated in Proposition~\ref{prop-coupling}.
\item Looking at the local neighborhood of $V_n$ up to suitable distance, it can be determined whether $V_n$ is near the 2-core.
More specifically,  the event that $V_n$ is within the $r$ neighborhood of the 2-core, is asymptotically  `equivalent' to the event that for some $L_n \to \infty$,  there exists two vertex disjoint paths of length $L_n$ from a vertex within the $r$ neighborhood of $V_n$.
This fact is later formalized in Lemma~\ref{lem:loc-event-approx}. 
\end{enumerate}
The proof follows once we have these ingredients in place. 
First we start by introducing some notations. 
Denote by $\mathcal{X}$ the branching process with initial distribution $D$ and progeny distribution $D^*-1$,
where $D$ is the limiting random variable as in Assumption~\ref{assumption1}, and $D^*$ follows the size-biased distribution of $D$, i.e., 
$$\prob{D^*=j}= \frac{j\ \prob{D=j}}{\E[D]}, \quad j\geq 1.$$
Note that the survival probability of $\mathcal{X}$ is given by $\eta$, as in~\eqref{defn:eta} (cf.~\cite{JL09}).
The number of offspring of $\mathcal{X}$ in generation $l$
is denoted by $\mathcal{Z}_l$, and  the number of vertices at distance $l$ in the breadth-first neighborhood exploration tree (i.e.~the BFS tree) starting from vertex $v$ is denoted by $Z_l(v)$.
Furthermore, define the following events:
\begin{enumerate}[{\normalfont (a)}]
\item $\mathrm{TC}_r(v)$: the vertex $v$ is within distance $r$ of the 2-core of $\mathscr{C}_{\sss (1)}$,
\item $\mathrm{LTC}_r(v,L)$: there exists a vertex $v'$ at distance $t$ of $v$, $t\leq r$, with two vertex disjoint paths of length $L$ starting at $v'$  which join $v'$ to the vertices at distance $t+L$ from $v$.
\item $\mathrm{DS}_r$:  the branching process $\mathcal{X}$ has a progeny within the first $r$ generations that has two children, both of which survive till infinity. 
\item $\mathrm{LDS}_r(L)$: the branching process $\mathcal{X}$ has a progeny within the first $r$ generations that has two children surviving further $L$ generations.
\end{enumerate}

As explained in the proof sketch above, the following proposition couples the local neighborhood of a typical vertex with the branching process $\mathcal{X}$.
\begin{proposition}[{\cite[Proposition~5.4]{RGCN2}}]
\label{prop-coupling} 
Let $\{\mathcal{Z}^1_l\}_{l\geq 1}, \{\mathcal{Z}^2_l\}_{l\geq 1}$ be two independent copies  of $\{\mathcal{Z}_l\}_{l\geq 1}$, and
 $V_n$, $W_n$ be two independent typical vertices of $\CM$. 
 There exists $(L_n)_{n\geq 1}$ such that $L_n\to \infty$, and a coupling $\big((\hat{Z}^1_l, \hat{Z}^2_l), (\hat{\mathcal{Z}}^1_l,\hat{\mathcal{Z}}^2_l)\big)_{l=1}^{L_n}$ of $\big((Z_l(V_n), Z_l(W_n)), (\mathcal{Z}^1_l,\mathcal{Z}^2_l)\big)_{l=1}^{L_n}$ such that 
 \begin{equation}\label{eq:coupling}
 \lim_{n\to\infty}\prob{\exists\ l\leq L_n: (\hat{Z}^1_l, \hat{Z}^2_l)\neq (\hat{\mathcal{Z}}^1_l,\hat{\mathcal{Z}}^2_l)}=0.
 \end{equation}
\end{proposition}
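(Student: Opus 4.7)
The plan is to construct $\CM$ via sequential BFS half-edge pairing from the two roots $V_n$ and $W_n$, and couple it with two independent branching processes on a common probability space, with the coupling failing only on an event of vanishing probability provided the number of half-edges revealed up to depth $L_n$ remains $\oP(\sqrt{\ell_n})$.

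First I would set up the joint construction. Starting with the $d_{V_n} + d_{W_n}$ root half-edges in the BFS queue, at each step $t$ select the oldest unpaired half-edge and pair it with a uniformly chosen element of the unpaired pool (of current size $\ell_n - 2t - 1$); the partner's owner becomes a child in the BFS tree and its remaining half-edges are queued for the next generation. On the same space, I would generate independent branching processes $(\hat{\mathcal{Z}}^1_l, \hat{\mathcal{Z}}^2_l)$ with root offspring drawn from the empirical law $\PR(D_n = k) = n_k/n$ and subsequent offspring distribution $D_n^* - 1$ with $\PR(D_n^* = k) = k n_k / \ell_n$; by Assumption~\ref{assumption1}, these converge to the laws of $D$ and $D^*$.

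Second, the key one-step coupling works as follows: conditional on the state of the exploration at time $t$, the partner half-edge lands on an undiscovered vertex of degree $k$ with probability $k n_k^{(t)} / (\ell_n - 2t - 1)$, where $n_k^{(t)}$ is the number of undiscovered degree-$k$ vertices. Provided the total explored half-edge count $T_t := \sum_{v \in V_t} d_v$ satisfies $T_t / \ell_n \to 0$, this conditional law is uniformly close in total variation to the empirical size-biased law $D_n^*$, allowing the step to be coupled with an independent fresh draw of $D_n^* - 1$ offspring. The coupling at step $t$ fails only under a \emph{collision}, i.e., when the uniformly chosen partner belongs to a previously discovered vertex, which has conditional probability at most $T_t / (\ell_n - 2t - 1)$. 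Summing, the union-bound coupling error up to depth $L_n$ is of order $T_{L_n}^2 / \ell_n$. By the moment assumption $\E[D_n^2] \to \E[D^2]$, one has $\E[\mathcal{Z}_l] = O(\nu^l)$, so $T_{L_n} = \OP\!\big(\mu \sum_{l < L_n} \nu^l\big)$; any $L_n \to \infty$ slow enough that this sum is $o(n^{1/2})$ (e.g.\ $L_n = o(\log n)$ when $\nu \leq 1$, or $L_n$ with $\nu^{L_n} = o(n^{1/2 - \varepsilon})$ when $\nu > 1$) makes the error vanish. Independence of the two trees rooted at $V_n$ and $W_n$ is automatic in this construction: both are part of a single exploration, and the same collision bound controls the probability that one tree meets the other.

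The main obstacle is controlling the size-biased coupling error under the mere second-moment assumption, since $D^*$ has finite mean but potentially no higher moments, so the tails of the offspring distribution cannot be uniformly dominated. A standard workaround is to truncate vertex degrees at a threshold $K_n \to \infty$ slowly; the tail $\E[D_n \ind{D_n > K_n}] \to 0$ by uniform integrability from Assumption~\ref{assumption1}\ref{assumption1-2}, so the contribution of large-degree vertices to the exploration is negligible, while the truncated empirical size-biased law converges uniformly to that of $D^* \wedge K_n$. Combining the collision bound, the truncation error, and the pointwise total-variation discrepancy between the empirical and limiting laws yields three vanishing contributions whose sum establishes~\eqref{eq:coupling} for a suitable joint choice of $L_n$ and $K_n$.
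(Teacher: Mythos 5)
Your proposal is correct and follows essentially the same route as the proof of the cited result \cite[Proposition~5.4]{RGCN2}: a joint BFS half-edge exploration from the two roots, coupled step-by-step with independent branching processes, with the error controlled by the collision probability $O(T_{L_n}^2/\ell_n)$, the depletion of the half-edge pool, and the total-variation convergence of the empirical size-biased law (handled via truncation under the second-moment assumption), all made negligible by taking $L_n\to\infty$ sufficiently slowly. Note that the paper itself does not prove this proposition but imports it verbatim from \cite[Proposition~5.4]{RGCN2}, so your argument is a reconstruction of the reference's proof rather than an alternative to anything in the paper.
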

The next lemma shows that for any $(L_n)_{n\geq 1}$ that increases to infinity at a rate slower than $\log(n)$, the two events $\TC_r(V_n)$ and $\LTC_r(V_n, L_n)$ are equivalent.

\begin{lemma}\label{lem:loc-event-approx} 
Let $(L_n)_{n\geq 1}$ be such that $L_n\to\infty$ and $L_n/\log(n)\to 0$.
Then, for any fixed $r\geq 1$, 
$$\lim_{n\to\infty}\prob{\mathrm{TC}_r(V_n)\ \Delta\ \mathrm{LTC}_r(V_n,L_n)}= 0.$$
\end{lemma}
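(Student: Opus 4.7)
The plan is to couple the local neighborhood of $V_n$ with the branching tree $\mathcal{X}$ via Proposition~\ref{prop-coupling}, then establish directly that the two inclusions $\LTC_r(V_n, L_n) \setminus \TC_r(V_n)$ and $\TC_r(V_n) \setminus \LTC_r(V_n, L_n)$ have vanishing probability. The sequence $L_n$ can be replaced by $\min\{L_n, \sqrt{\log n}\}$ so that $L_n \to \infty$ and $L_n / \log n \to 0$ both still hold, while Proposition~\ref{prop-coupling} provides a coupling of the $(r+L_n)$-depth BFS tree from $V_n$ with $\mathcal{X}$, with success probability $1-o(1)$. Combined with Claim~\ref{claim:not-close-short-cycle} (absence of short cycles through the local neighborhood of a typical vertex), this renders the $(r+L_n)$-neighborhood of $V_n$ a literal tree on a good event $\mathcal{G}_n$ with $\PR(\mathcal{G}_n)\to 1$.

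For the inclusion $\PR(\LTC_r(V_n,L_n)\cap \mathcal{G}_n \setminus \TC_r(V_n))\to 0$: on $\LTC_r$, some $v'$ at distance $\leq r$ from $V_n$ has, in the coupled branching tree, two children with subtrees of depth $\geq L_n$. Since $\PR(\mathcal{Z}_{L_n}>0)\downarrow \eta$ monotonically, conditional on a subtree reaching generation $L_n$, it survives forever with probability $1-o(1)$; hence with probability $1-o(1)$, both branches of $v'$ survive forever. In the graph, each surviving branch corresponds to a subtree of $\Theta(n)$ vertices lying in $\sCL$ by Lemma~\ref{lem:small-no-giant}, and a standard branching-process argument (survival implies positive density of branching individuals) shows such a subtree contains vertices with $\geq 2$ further surviving children, which under the coupling translate to vertices of $\sCL^2$. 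Consequently $v'$ admits two vertex-disjoint paths in $G$ to $\sCL^2$; but any vertex in a hanging tree reaches $\sCL^2$ through a unique upward path, forcing $v'\in \sCL^2$, and hence $\TC_r(V_n)$.

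For the reverse inclusion $\PR(\TC_r(V_n)\cap \mathcal{G}_n \setminus \LTC_r(V_n,L_n))\to 0$: let $v^*$ be the 2-core vertex closest to $V_n$, at distance $d\leq r$. Since $v^*$ has degree $\geq 2$ in $\sCL^2$, it has two 2-core neighbors outside the hanging subtree that contains $V_n$; on $\mathcal{G}_n$ both are BFS children of $v^*$. Starting from each, greedily extend a path staying inside $\sCL^2$: at every step the current endpoint has 2-core degree $\geq 2$, so it admits a valid forward continuation, while the tree-like local structure on $\mathcal{G}_n$ guarantees the two paths remain vertex-disjoint and advance outward from $V_n$ for $L_n$ steps. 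This produces two vertex-disjoint outgoing paths of length $L_n$ from $v':=v^*$, certifying $\LTC_r(V_n,L_n)$.

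The main obstacle will be the first inclusion, specifically establishing that a subtree surviving the branching process actually contains a vertex of $\sCL^2$ in the graph, not merely of $\sCL$. This requires a self-contained branching-process argument---without appealing to Lemma~\ref{lem:small-hanging-trees}, which is downstream of the present lemma---showing that conditional on survival the Galton--Watson tree has vertices with $\geq 2$ surviving offspring at every scale, ruling out the pathological scenario in which a $\Theta(n)$-sized subtree sits entirely inside the forest of hanging trees of $\sCL$.
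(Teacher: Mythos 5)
Your treatment of the inclusion $\TC_r(V_n)\setminus\LTC_r(V_n,L_n)$ is fine and is essentially the paper's argument in constructive form: both reduce to the absence of short cycles near $V_n$ (Claim~\ref{claim:not-close-short-cycle}), after which two disjoint outgoing paths inside the $2$-core can be grown from the nearest $2$-core vertex. The genuine gap is in the other inclusion, $\LTC_r(V_n,L_n)\setminus\TC_r(V_n)$, and you have correctly identified where it is but not how to close it. The coupling of Proposition~\ref{prop-coupling} only matches the BFS exploration with the branching process up to depth $L_n$; beyond that depth it says nothing about the graph. Consequently, "conditional on a branch reaching generation $L_n$ it survives forever with probability $1-o(1)$" is a statement about $\mathcal{X}$ alone, and no amount of branching-process analysis can convert "survival" into "$v'\in\sCL^2$": the branching-process tree is always a tree and has no $2$-core, so the pathological scenarios (the two long disjoint paths both running into a single huge hanging tree of $\sCL$, or into an intermediate component of size $>L_n$ that is not the giant) are invisible to it. The step "such a subtree contains vertices with $\geq 2$ further surviving children, which under the coupling translate to vertices of $\sCL^2$" is circular --- whether a graph vertex with two deep disjoint branches lies in the $2$-core is precisely the implication being proved.

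The paper closes this with a graph-structural, not probabilistic-tree, argument: the sequential construction of Algorithm~\ref{algo-CM} builds $\CM$ so that the vertex $v$ is created last by coalescing $d_v$ degree-one vertices of a smaller configuration model $\mathcal{G}^v(\hat n-1)$. If $\LTC_0(v,L_n)$ holds, each long path from $v$ enters a component of $\mathcal{G}^v(\hat n-1)$ of size $>L_n$; if two such paths enter the (unique) largest component, a cycle through $v$ is formed and $v\in\sCL^2$, so the only way to violate $\TC_0(v)$ is that $v$ attaches to an \emph{intermediate} component (event $E(v)$), whose total vertex mass is $o(n)$ by Lemma~\ref{lem:small-no-giant}, uniformly in $v$ (Claim~\ref{claim:uniform-v}); the case $r\geq 1$ then follows from Lemma~\ref{lem:small-nbd}. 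Some device of this kind --- comparing $\CM$ with the graph in which $v$ is deleted or formed last --- is what your proposal is missing, and the "self-contained branching-process argument" you defer to cannot substitute for it.
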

We defer the proof of Lemma~\ref{lem:loc-event-approx} until Section~\ref{sec:local_approx}, and complete the proof of  Lemma~\ref{lem:small-hanging-trees} using Lemma~\ref{lem:loc-event-approx}.
\begin{proof}[Proof of Lemma~\ref{lem:small-hanging-trees}]
Fix any $r>0$. 
Observe that for any $L_n$ such that $L_n\to\infty$,
\begin{equation*}
\lim_{n\to\infty}\prob{\LDS_r(L_n)}=\prob{\DS_r}.
\end{equation*}
Furthermore, choose $L_n^{(1)}$ according to Proposition~\ref{prop-coupling}, and $L_n^{(2)}$ such that Lemma~\ref{lem:loc-event-approx} holds.
Therefore, for $L_n = \min\{L_n^{(1)},L_n^{(2)}\}$, 
\begin{align}\label{eq:temp}
\lim_{n\to\infty} \prob{\TC_r(V_n)}
=\lim_{n\to\infty} \prob{\LTC_r(V_n,L_n)} 
=\lim_{n\to\infty} \prob{\LDS_r(L_n)}
=\prob{\DS_r}.
\end{align}
Also, 
\begin{align*}
&\frac{\left|\mathcal{N}[\mathscr{C}_{\sss (1)}^2,r]\right|}{n} = \prob{\TC_r(V_n)\given \CM}
\implies \frac{1}{n}\expt{|\mathcal{N}[\mathscr{C}_{\sss (1)}^2,r]|} = \prob{\TC_r(V_n)},
\end{align*}
and hence using~\eqref{eq:temp}, we get
\begin{equation}\label{eq:mean-conv}
\lim_{n\to\infty}\frac{1}{n}\expt{|\mathcal{N}[\mathscr{C}_{\sss (1)}^2,r]|} = \lim_{n\to\infty}\prob{\TC_r(V_n)}=\prob{\DS_r}.
\end{equation}
To find $\var{|\mathcal{N}[\C^2,r]|}$, consider two vertices $V_n$, $W_n$ chosen uniformly at random independently of the graph and independently of each other. 
Again, note that
$$\frac{|\mathcal{N}[\C^2,r]|^2}{n^2} = \prob{V_n\in\mathcal{N}[\C^2,r], W_n\in\mathcal{N}[\C^2,r]\given \CM}.$$
Thus,
\begin{equation}\label{var-negihbor-2core-d}
 \begin{split}
  \frac{1}{n^2}\expt{|\mathcal{N}[\C^2,r]|^2} &=  \prob{V_n\in\mathcal{N}[\C^2,r], W_n\in\mathcal{N}[\C^2,r]} \\
  &= \prob{\TC_r(V_n)\cap \TC_r(W_n)}. 
  \end{split}
\end{equation}
Recall from Proposition~\ref{prop-coupling} that with high probability, the $L_n$ neighborhoods of $V_n$, $W_n$ can be coupled with two independent copies of $\mathcal{X}$. 
Hence, under the given coupling
\begin{equation*}
\begin{split}
\prob{\TC_r(V_n)\cap \TC_r(W_n)}&=\prob{\LTC_r(V_n,L_n)\cap \LTC_r(W_n,L_n)}+o(1)\\
 & = \prob{\LTC_r(V_n,L_n)}\prob{\LTC_r(W_n,L_n)}+o(1)\\
 &=\prob{V_n\in\mathcal{N}[\C^2,r]}^2+o(1),
\end{split}
\end{equation*}
and it follows that
\begin{equation*}
\begin{split}
\frac{1}{n^2} \expt{|\mathcal{N}[\C^2,r]|^2} =\prob{V_n\in\mathcal{N}[\C^2,r]}^2+o(1)= \prob{\mathrm{DS}_r}^2+o(1).
  \end{split}
\end{equation*}
Therefore,
\begin{equation}\label{eq:var}
 \frac{1}{n^2}\var{|\mathcal{N}[\C^2,r]|}\to 0.
\end{equation}
Using Chebyshev's inequality, \eqref{eq:mean-conv} and \eqref{eq:var} yields for any fixed $r\geq 1$,
\begin{equation}\label{eq:r-nbd-prob}
 \frac{|\mathcal{N}[\C^2,r)]|}{n}\pto  \prob{\mathrm{DS}_r}.
\end{equation}
Now for any supercritical branching process conditioned on survival, the probability that the root has atleast two children surviving to infinity is bounded away from zero.
Therefore, conditioned on survival, the probability that any progeny in an infinite line of descendants has another child that survives till infinity is bounded away from zero. 
Thus, $\PR(\mathcal{X}\text{ survives}\setminus \mathrm{DS}_r)\leq c^r,$ for some $c<1$. 
Further, since $\mathrm{DS}_r$ is an increasing event in $r$, $\PR(\mathcal{X}\text{ survives}\setminus \cup_{r\geq 0}\mathrm{DS}_r)\leq\lim_{r\to\infty}c^r=0,$ and hence
\begin{equation}\label{eq:r-limit}
\lim_{r\to\infty}\prob{\mathrm{DS}_r} =\prob{\mathcal{X}\text{ survives}}=\eta.
\end{equation}
Using Theorem~\ref{thm:JL09}, \eqref{eq:r-nbd-prob} yields
\begin{equation}\label{eq:outside-r-nbd}
\begin{split}
\frac{\left|\mathscr{C}_{\sss (1)}\setminus\mathcal{N}[\mathscr{C}_{\sss (1)}^2,r]\right|}{n}\pto \eta - \prob{\mathrm{DS}_r}>0.
\end{split}
\end{equation}
Now, $\PR(\mathrm{DS}_r) \nearrow \eta$ as $r\to\infty$. Thus,
for any $\varepsilon>0$, we can choose $r_0 = r_0(\varepsilon)$ such that $\eta- \prob{\mathrm{DS}_r}<\varepsilon$ for all $r\geq r_0$. 
Hence, with high probability
$\left|\mathscr{C}_{\sss (1)}\setminus\mathcal{N}[\mathscr{C}_{\sss (1)}^2,r]\right|<\varepsilon n$, for all $r\geq r_0$.
\end{proof}

\subsubsection{2-core is well-connected}
\begin{proof}[Proof of Lemma~\ref{lem-epsilon-delta-2-core}]
In this proof we leverage the first moment method argument as used in~\cite{BJR07}. 
Condition on the degree sequence 
$\Mtilde{\bld{d}} = (\tilde{d}_1, \cdots, \tilde{d}_n)$ of $\mathscr{C}_{\sss (1)}^2$.  
Let $n_2 = |\mathscr{C}_{\sss (1)}^2|$ and let $m_2$  be the number of edges in the $2$-core. 

Recall that $\mathscr{C}_{\sss(1)}^2$ can be obtained from $\mathscr{C}_{\sss (1)}$ by sequentially deleting the vertices of degree one until all the vertices in the deleted subgraph have degree at least two. 
Thus, two \emph{paired} half-edges are deleted at each step, and conditional on the deleted half-edges the perfect matching on the rest of the half-edges remains a uniform perfect matching. 
In particular, $\mathscr{C}_{\sss (1)}^2$ is distributed as a configuration model conditioned on the degree sequence~$\Mtilde{\bld{d}}$ (cf. \cite[Section~3]{JL07}).
Furthermore, we will need the following estimate for the number of degree three vertices in the 2-core:
\begin{claim}\label{claim:deg-dist-2core}
Let $N_j$ denote the number vertices in the 2-core having degree $j$. Denote by $\rho_j$ the probability that the root of the branching process  $\mathcal{X}$ has exactly $j$ neighbors that survive. Then, as $n\to\infty$, $N_j/n\pto \rho_j$.
\end{claim}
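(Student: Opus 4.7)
The plan is to apply the second-moment method combined with the branching-process coupling of Proposition~\ref{prop-coupling}, in the same spirit as in the proof of Lemma~\ref{lem:small-hanging-trees}. Writing $N_j = \sum_{v\in[n]} \ind{v \in \mathscr{C}_{\sss (1)}^2,\, \tilde{d}_v = j}$ and averaging over a typical vertex $V_n$, one has $\E[N_j]/n = \PR(V_n \in \mathscr{C}_{\sss (1)}^2,\, \tilde{d}_{V_n} = j)$, so the task reduces to approximating this probability by $\rho_j$ and showing that $\var{N_j} = o(n^2)$.

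For the first-moment calculation, I would introduce a local analog of the 2-core event, in the spirit of the event $\LTC_r(v,L)$ used in Lemma~\ref{lem:loc-event-approx}. Let $\mathcal{E}_j(V_n, L_n)$ denote the event that exactly $j$ of the $d_{V_n}$ half-edges at $V_n$ lie on internally vertex-disjoint paths of length $L_n$ starting from $V_n$. Arguments analogous to those used in the proof of Lemma~\ref{lem:loc-event-approx} should yield
\[
\lim_{n\to\infty} \PR\bigl(\{V_n \in \mathscr{C}_{\sss (1)}^2,\, \tilde{d}_{V_n} = j\}\ \Delta\ \mathcal{E}_j(V_n, L_n)\bigr) = 0
\]
provided $L_n \to \infty$ slower than $\log n$. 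Coupling the first $L_n$ generations of the BFS exploration at $V_n$ with the branching process $\mathcal{X}$ via Proposition~\ref{prop-coupling}, the probability of $\mathcal{E}_j(V_n, L_n)$ converges to the probability that exactly $j$ subtrees descending from the root of $\mathcal{X}$ survive up to generation $L_n$, which tends to $\rho_j$. Hence $\E[N_j]/n \to \rho_j$.

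For the variance, I would take two independent typical vertices $V_n, W_n$ and apply the joint coupling of Proposition~\ref{prop-coupling}, so that their $L_n$-neighborhoods are asymptotically independent copies of $\mathcal{X}$. Repeating the local-event approximation for both vertices yields
\[
\frac{1}{n^2}\expt{N_j^2} = \PR\bigl(V_n, W_n \in \mathscr{C}_{\sss (1)}^2,\, \tilde{d}_{V_n} = \tilde{d}_{W_n} = j\bigr) \longrightarrow \rho_j^2.
\]
Hence $\var{N_j}/n^2 \to 0$, and Chebyshev's inequality delivers $N_j/n \pto \rho_j$.

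The main obstacle is establishing the local-event approximation in the first step. One direction is immediate: a vertex in the 2-core with 2-core-degree $j$ must have $j$ long vertex-disjoint paths inside $\mathscr{C}_{\sss (1)}^2$. The harder converse requires showing that if $V_n$ has $j$ such long paths of length $L_n$, then these paths are not ``shaved off'' by the 2-core pruning procedure. This mirrors the argument of Lemma~\ref{lem:loc-event-approx}: Lemma~\ref{lem:small-no-giant} rules out intermediate components that could absorb the long paths, so each such path must reach $\mathscr{C}_{\sss (1)}^2$, and absence of short cycles in a typical neighborhood ensures that the $j$ paths correspond to $j$ distinct surviving neighbors of $V_n$ in $\mathscr{C}_{\sss (1)}^2$.
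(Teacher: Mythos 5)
Your proposal is correct and follows essentially the same route as the paper: the paper likewise reduces to $\E[N_j]/n=\PR(V_n\in\mathscr{C}_{\sss(1)}^2,\ \text{degree }j)$, introduces the localized event that $j$ disjoint non-self-intersecting paths of length $L_n$ emanate from $V_n$, proves the symmetric-difference estimate by the arguments of Lemma~\ref{lem:loc-event-approx}, and controls the variance via the two-vertex coupling of Proposition~\ref{prop-coupling} exactly as in \eqref{var-negihbor-2core-d}. No substantive differences to report.
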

\begin{claimproof}
The proof follows using similar arguments as in the proof of Lemma~\ref{lem:small-hanging-trees}. Note that $\E[N_j]/n = \PR(V_n\in \C^2, \text{ and }D_n = j)$, where $V_n$ is a typical vertex, and $D_n$ is the degree of $V_n$. 
Let $\mathrm{TS}_j(V_n)$ denote the event that $\{V_n\in \C^2, \text{ and }D_n = j\}$ and $\mathrm{LTS}_j(V_n)$ denote the (localized) event that there are $j$ disjoint non self-intersecting  paths starting from $V_n$ of length~$L_n$, where $L_n \to\infty$ such that Proposition~\ref{prop-coupling} holds. 
The essentially  same arguments as in the proof of Lemma~\ref{lem:loc-event-approx} (see Section~\ref{sec:local_approx}) can be followed to show that, for $L_n\to\infty$ and  $L_n = o(\log(n))$ ,
\begin{equation}
\PR(\mathrm{TS}_j(V_n)\ \Delta\ \mathrm{LTS}_j(V_n,L_n))\to 0.
\end{equation}
Moreover, an application of Proposition~\ref{prop-coupling} and an argument identical
to \eqref{var-negihbor-2core-d} again yields $\var{N_j}=o(n^2)$ and the proof follows.
\end{claimproof}
\noindent
Having proved the local event approximation in Section~\ref{sec:local_approx}, the rest of the proof is similar to~\cite{BJR07}, and will be sketched briefly for completeness. 

%Let us condition on the degree sequence $\Mtilde{\bld{d}}$ of $\mathscr{C}_{\sss (1)}^2$.
For any subset $A \subset \mathscr{C}_{\sss (1)}^2$, we define $\bar{A} = \mathscr{C}_{\sss (1)}^2 \backslash A$. 
Further, recall that for $A \subset \mathscr{C}_{\sss (1)}^2$, we denote the half-edges incident to the vertices in $A$ by $S(A)$. 
For a set of half-edges $S$, denote by $p(S;\Mtilde{\bld{d}})$ the probability that the half-edges of $S$ are paired among each other in $\mathscr{C}_{\sss (1)}^2$, conditional on~$\Mtilde{\bld{d}}$. Using the fact that the half-edges of $\mathscr{C}_{\sss (1)}^2$ form a uniform perfect matching conditional on the degrees,  we obtain
\begin{equation}\label{expr-p-S}
p(S;\Mtilde{\bld{d}}) = \frac{(|S| -1) !! (2m_2 - |S| - 1) !!}{(2m_2-1)!!} \leq \frac{1}{ \binom{m_2}{|S|/2 }}.
\end{equation}
A partition $A, \bar{A}$ of $\mathscr{C}_{\sss (1)}^2$ is called $(\varepsilon,\delta)$-\emph{bad} if $|A|, | \bar{A}| \geq \varepsilon n$, and  there is a subset $S\subset S(A)$ with $|S(A) \setminus S| \leq \delta n$ such that all the half-edges in $S$ are paired with each other during the random matching of the half-edges. 
Let $\Gamma_n$ denote the number of bad partitions of $\mathscr{C}_{\sss (1)}^2$.
 Thus, 
\begin{align}
\E_{\Mtilde{\bld{d}}}[\Gamma_n] \leq \sum_{ A \subset \mathscr{C}_{\sss (1)}^2 : |A|, |\bar{A}| \geq \varepsilon n } \sum_{S \subset S(A): |S(A)\backslash S| \leq \delta n} p(S;\Mtilde{\bld{d}}), \label{eq:firstmoment} 
\end{align}
where $\E_{\bld{\tilde{d}}}[\cdot]$ denotes the conditional expectation given the degree sequence of $\mathscr{C}_{\sss (1)}^2$ to be $\Mtilde{\bld{d}}$.
We need to show that for all $\varepsilon>0$, there exists $\delta = \delta(\varepsilon)>0$, such that  $\E_{\Mtilde{\bld{d}}}[\Gamma_n]\to 0$.

We first derive a lower bound on $\binom{m_2}{|S|/2}$. 
Observe that each vertex in $\mathscr{C}_{\sss (1)}^2$ has degree at least $2$, and thus, $|S(A)|/2 \geq |A|$ and $m_2 - |S(A)|/2 \geq |\bar{A}|$, where for the second inequality we have used the fact that $2m_2 -|S(A)| = S(\bar{A})$.
 Note that for a supercritical $\CM$, $\E[D(D-2)]>0$, and Assumption~\ref{assumption1}.\ref{assumption1-3} thus implies $\PR(D\geq 3) >0$.
  Therefore, by Claim~\ref{claim:deg-dist-2core}, there exists $\varepsilon_1>0$ such that the proportion of degree 3 vertices in $\mathscr{C}_{\sss (1)}^2$ is at least $\varepsilon_1 n$ with high probability. 
  Fix such an $\varepsilon_1>0$, and 
 let $\mathcal{A}_n$ denote the event that the proportion of degree 3 vertices in $\mathscr{C}_{\sss (1)}^2$ is at least $\varepsilon_1 n$.  
Note that on $\mathcal{A}_n$, one of the parts among $A$ and $\bar{A}$ contains at least $\varepsilon_1n/2$ degree three vertices. Consequently, either $|S(A) |/2 \geq |A| + \varepsilon_1 n / 4$ or $ m_2 - |S(A)|/2 \geq |\bar{A}| + \varepsilon_1 n/4$ on $\mathcal{A}_n$. 
Using these bounds, and the fact that $|A|, |\bar{A}| \geq \varepsilon n$, it follows that 
\begin{align}
\binom{m_2}{|S(A)|/2} \geq \exp(4an) \binom{n_2}{|A|} , \nonumber 
\end{align}
for some $a>0$ chosen as a function of $\varepsilon, \varepsilon_1$. Moreover, for any partitions $A,\bar{A}$ we have $|S(A) \backslash S| \leq \delta n$, and $\delta$ can be chosen small enough such that
\begin{align}
\binom{m_2}{|S|/2 } \geq \exp(3an) \binom{n_2}{|A|}, \nonumber 
\end{align}which gives the requisite lower bound.

To derive an upper bound on the number of possible choices for $A$ and $S$ in~\eqref{eq:firstmoment}, we note that given $|A|=a_0$, there are $\binom{n_2}{a_0}$ ways of choosing $A$. 
Also, given $A$, there are at most $\binom{2 m_2 }{\delta n}$ choices for $S(A)\setminus S$ such that $|S(A)\backslash S| \leq \delta n$.
Plugging these estimates back into \eqref{eq:firstmoment} yields
\begin{align}
\E_{\bld{\tilde{d}}}[\Gamma_n] \leq \sum_{ \varepsilon n \leq a_0 \leq n_2 - \varepsilon n} \binom{n_2}{a_0}  \binom{2m_2}{\delta n} \binom{n_2}{a_0}^{-1} \exp(-3an) \quad \text{on } \mathcal{A}_n. \nonumber 
\end{align}
Thus, for a small enough choice of $\delta>0$, it follows that
\begin{equation*}
 \E_{\bld{\tilde{d}}}[\Gamma_n]\to 0\quad \text{on } \mathcal{A}_n,  \quad\text{and}\quad \PR(\mathcal{A}_n^c) \to 0.
\end{equation*}
This completes the proof of Lemma~\ref{lem-epsilon-delta-2-core}. 
\end{proof} 
\subsection{Approximation of typical local neighborhoods}
\label{sec:local_approx}
%In this section, we derive detailed properties about the structures of typical neighborhoods in \CM. 
%
%
%
We prove Lemma~\ref{lem:loc-event-approx} in this section. 
A component $\mathscr{C}_{\sss (i)}$ for $i\geq 2$ (i.e., except the largest component) will be called an  \emph{intermediate component} if $|\mathscr{C}_{\sss (i)}|>L_n$ for some $L_n\to\infty$.
We need to study some structural properties of the intermediate components that will play a key role in establishing Lemma~\ref{lem:loc-event-approx}.
For any $L>0$, define
\begin{equation}
 Q_n(L):=\sum_{i\geq 2}|\mathscr{C}_{\sss (i)}|\ind{|\mathscr{C}_{\sss (i)}| \geq L}.
\end{equation}
Denote by $\mathscr{C}(v)$, the component in $\CM$ containing the vertex $v$.
%Thus, $\mathscr{C}(V_n)$ is the component in $\CM$ containing the typical vertex $V_n$
The next lemma roughly states that the proportion of vertices that belong to some
intermediate component is negligible with high probability.
\begin{lemma}[Small number of vertices in intermediate components] \label{lem:small-no-giant} For any $\varepsilon >0$, there exists $K=K(\varepsilon)$, such that
\begin{equation}\label{CVn-small}
 \limsup_{n\to\infty}\prob{V_n\notin \C, |\mathscr{C}(V_n)|>K}<\varepsilon.
\end{equation}
Consequently, for any $L_n$ such that $L_n\to\infty$, as $n\to\infty$, 
\begin{equation}\label{eq:no-giant-small}
 \frac{\expt{Q_n(L_n)}}{n}\to 0.
\end{equation}
\end{lemma}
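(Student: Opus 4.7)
The plan is to establish \eqref{CVn-small} first via a branching-process coupling, and then to derive \eqref{eq:no-giant-small} from \eqref{CVn-small} by a straightforward counting argument. The guiding intuition is that on the event $\{V_n \notin \C\}$, the component $\mathscr{C}(V_n)$ corresponds to the \emph{dying-out} regime of the approximating branching process $\mathcal{X}$, whose total progeny is a.s.~finite, and hence $\PR(K < |\mathcal{X}| < \infty)$ can be made arbitrarily small by taking $K$ large.

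For \eqref{CVn-small}, I would decompose
\[
\PR(V_n \notin \C,\, |\mathscr{C}(V_n)| > K) = \PR(|\mathscr{C}(V_n)| > K) - \PR(V_n \in \C,\, |\mathscr{C}(V_n)| > K).
\]
Since $K$ is fixed and $|\C|/n \pto \eta > 0$ by Theorem~\ref{thm:JL09}~(i), we have $|\C| > K$ with high probability, whence $\PR(V_n \in \C,\, |\mathscr{C}(V_n)| > K) = \PR(V_n \in \C) + o(1) \to \eta$. For the first term, I would invoke Proposition~\ref{prop-coupling} with a sequence $L_n \to \infty$, under which $Z_l(V_n) = \mathcal{Z}_l$ for all $l \leq L_n$ on an event of probability $1 - o(1)$. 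The key observation is that the event $\{|\mathscr{C}(V_n)| > K\}$ is equivalent to $\{\sum_{l=0}^{K} Z_l(V_n) > K\}$ (any component of size at most $K$ has diameter at most $K - 1$, and any component with a vertex at distance $> K$ from $V_n$ necessarily satisfies $Z_l(V_n) \geq 1$ for all $l \leq K$, yielding $\sum_{l \leq K} Z_l(V_n) \geq K + 1$), and the analogous statement holds for $\mathcal{X}$. Hence for all $n$ large enough so that $L_n \geq K$, the events $\{|\mathscr{C}(V_n)| > K\}$ and $\{|\mathcal{X}| > K\}$ coincide on the coupling event, giving $\PR(|\mathscr{C}(V_n)| > K) \to \PR(|\mathcal{X}| > K)$. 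Combining the two limits,
\[
\limsup_{n\to\infty} \PR(V_n \notin \C,\, |\mathscr{C}(V_n)| > K) = \PR(|\mathcal{X}| > K) - \eta = \PR(K < |\mathcal{X}| < \infty),
\]
using $\PR(|\mathcal{X}| = \infty) = \eta$. The right-hand side tends to $0$ as $K \to \infty$, so \eqref{CVn-small} follows by choosing $K = K(\varepsilon)$ appropriately.

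For \eqref{eq:no-giant-small}, I would rewrite $Q_n(L) = \sum_{v \in [n]} \1\{v \notin \C,\, |\mathscr{C}(v)| \geq L\}$, since this indeed counts the vertices outside $\C$ belonging to components of size at least $L$. Because $V_n$ is uniform on $[n]$ and independent of $\CM$,
\[
\frac{\E[Q_n(L_n)]}{n} = \PR(V_n \notin \C,\, |\mathscr{C}(V_n)| \geq L_n).
\]
Given any $\varepsilon > 0$, let $K = K(\varepsilon)$ be as in \eqref{CVn-small}; since $L_n \to \infty$, for all $n$ sufficiently large $L_n > K$, so the right-hand side is bounded above by $\PR(V_n \notin \C,\, |\mathscr{C}(V_n)| > K) < \varepsilon$. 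As $\varepsilon$ was arbitrary, \eqref{eq:no-giant-small} follows. The main technical point is the passage from $\{|\mathscr{C}(V_n)| > K\}$ to $\{|\mathcal{X}| > K\}$; once one notes that this event is determined by only finitely many generations of the BFS exploration, Proposition~\ref{prop-coupling} makes the comparison rigorous and the remaining steps are routine.
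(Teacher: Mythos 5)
Your proposal is correct and follows essentially the same route as the paper: both rely on $\PR(V_n\in\C)\to\PR(|\mathcal{X}|=\infty)=\eta$ from Theorem~\ref{thm:JL09} together with the observation that $\{|\mathscr{C}(V_n)|>K\}$ is determined by the first $K$ generations of the BFS exploration, so Proposition~\ref{prop-coupling} gives $\PR(|\mathscr{C}(V_n)|>K)\to\PR(|\mathcal{X}|>K)$, and both derive \eqref{eq:no-giant-small} by rewriting $\E[Q_n(L_n)]/n$ as $\PR(V_n\notin\C,|\mathscr{C}(V_n)|>L_n)$. Your write-up is in fact slightly more careful than the paper's at the step identifying $\{|\mathscr{C}(V_n)|>K\}$ with a finite-depth event and in keeping the restriction $v\notin\C$ when rewriting $Q_n(L_n)$ as a sum of indicators.
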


\begin{proof}
Fix any $K\geq 1$. Recall from Theorem~\ref{thm:JL09} that 
\begin{equation}\label{giant-prob}
 \prob{V_n\in \C} =\prob{\mathscr{C}(V_n)=\C}\to \prob{|\mathcal{X}|=\infty}.
\end{equation} 
Now, based on the information about the $K$-neighborhood of $V_n$, it can be exactly determined whether the event $\{|\mathscr{C}(V_n)|>K\}$ has occurred or not.
Therefore, using Proposition~\ref{prop-coupling}, we have
\begin{equation}\label{giant-large}
 \prob{|\mathscr{C}(V_n)|\leq K} = \prob{|\mathcal{X}|\leq K}+o(1).
\end{equation} 
Combining \eqref{giant-prob} and \eqref{giant-large} yields
\begin{equation}
\begin{split}
 \prob{V_n\notin \C, |\mathscr{C}(V_n)|>K}
 =\prob{|\mathcal{X}|\in (K,\infty)}+o(1),
 \end{split}
\end{equation}
and hence \eqref{CVn-small} follows. To see \eqref{eq:no-giant-small}, notice that by \eqref{CVn-small},
\begin{equation*}
 \begin{split}
 \frac{1}{n} \expt{Q_n(L_n)}&=\E\bigg[\frac{1}{n}\sum_{i\geq 2}|\mathscr{C}_{\sss (i)}|\ind{|\mathscr{C}_{\sss (i)} |> L_n}\bigg]\\
 &=\frac{1}{n}\E\bigg[\sum_{v\in [n] }\ind{|\mathscr{C}(v) |> L_n}\bigg]=\prob{V_n\notin\C,|\mathscr{C}(V_n)|>L_n}\to 0.
 \end{split}
\end{equation*}
\end{proof}
Let us now introduce the following novel construction of the configuration model, that will allow us to relate it to the graph after deletion of one vertex. 
This will be crucial for completing the proof of Lemma~\ref{lem:loc-event-approx}. 
\begin{algo}\label{algo-CM}\normalfont 
Consider a given degree sequence $\bld{d}$ on vertex set $[n]$. Recall that $\ell_n = \sum_i d_i$ is the sum of the degrees. 
First $n_0$ isolated vertices are assigned their vertex labels. 
The algorithm below generates the random topology induced by the vertices of degree one or larger.
 \begin{enumerate}[(S1)]
  \item Initially there are $\ell_n$ degree one vertices  labeled $v(1),\dots,v(\ell_n)$, each with an attached half-edge. Call these the set of \emph{red} vertices. 
  Construct a uniform matching of these $\ell_n$ half-edges. 
  Denote the corresponding graph by $\mathcal{G}(0)$, and set $V(0)=\emptyset$.
  Also, take any permutation of the index set $\{i\in [n]:d_i>1\}$ of all vertices of degree more than one, and denote it by $\{\sigma_1,\sigma_2,\ldots,\sigma_{\hat{n}}\}$, where $\hat{n}=n-n_0-n_1$.
  \item At step $t+1$, $0\leq t\leq \hat{n}-1$, choose $d_{\sigma_t}$ degree one vertices from the graph $\mathcal{G}(t)$ uniformly at random independently of the perfect matching, and coalesce them into a single \emph{black} vertex with index $\sigma_t$. 
  Let $\mathcal{G}(t+1)$ be the new modified graph, and set $V(t+1)=V(t)\cup \{\sigma_t\}$. See Figure~\ref{fig:explosion} for an illustration of this step.
  \item After $\hat{n}^{\mathrm{th}}$ step, when all indices $i$ with $d_i>1$ are exhausted, label all the degree one vertices at random, independently of (S1) and (S2). 
 \end{enumerate}  
\end{algo}
Note that the vertex index assignment process is independent of the initial perfect matching, and therefore, at any time step $t$,  $\mathcal{G}(t)$ is a configuration model given its degree sequence.
The algorithm, thus indeed produces a configuration model with degree sequence $\bld{d}$ in the end.
This is formally stated in Lemma~\ref{lem:distn-alt-cons}.
Also,  notice that at any time step $t$,  the subgraph in $\mathcal{G}(t)$ induced by the set of black vertices remains fixed till the formation of $\CM$.
\begin{lemma}\label{lem:distn-alt-cons} 
For all $t\geq 0$,
$\mathcal{G}(t)$ is a configuration model given its degree sequence. In particular, the final graph is distributed as \CM.
\end{lemma}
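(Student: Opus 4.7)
The plan is to prove the statement by induction on $t \geq 0$, leveraging the exchangeability of a uniform random perfect matching under permutation of half-edges. The base case $t = 0$ is essentially by definition: the graph $\mathcal{G}(0)$ consists of $n_0$ isolated vertices together with $\ell_n$ degree-one red vertices whose attached half-edges are joined by a uniform random perfect matching, which is exactly the configuration model on the degree sequence having $n_0$ zeros and $\ell_n$ ones.

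For the inductive step, the strategy relies on the following alternative description of $\mathrm{CM}$ on any degree sequence $\bld{d}'$ with $\sum_i d_i' = \ell_n$: (a) take $\ell_n$ labeled half-edges and perform a uniform random perfect matching; independently, (b) partition the half-edges uniformly at random into ordered blocks of the prescribed sizes $(d_1', d_2', \ldots)$, declaring the $i$-th block to be the set of half-edges of vertex $i$. Since steps (a) and (b) are independent, they may be performed in any order or even interleaved without altering the joint distribution. Step (S1) of Algorithm~\ref{algo-CM} realizes (a), while (S2) and (S3) realize (b) sequentially, one block at a time. Assuming $\mathcal{G}(t)$ is $\mathrm{CM}$-distributed on the intermediate degree sequence $\bld{d}^{(t)}$ recording the degrees of the already-coalesced black vertices and degree one for each remaining red vertex, the coalescing step $t+1$ extends the partial partition by one new block of size $d_{\sigma_t}$ drawn uniformly from the unassigned red half-edges, yielding the $\mathrm{CM}$ distribution on the updated sequence $\bld{d}^{(t+1)}$.

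The step requiring the most care is to verify that choosing $d_{\sigma_t}$ red vertices uniformly at random (as in (S2)) is probabilistically equivalent to choosing $d_{\sigma_t}$ red half-edges uniformly at random, and that this choice is independent of the pairing fixed in (S1). Both facts follow from the exchangeability of the $\ell_n$ half-edges under the uniform perfect matching: each red vertex is indistinguishable from any other since it carries exactly one half-edge, and the uniform distribution on perfect matchings is invariant under any relabeling of half-edges. After $\hat{n}$ coalescing steps, all indices $i$ with $d_i > 1$ are exhausted, and step (S3) assigns labels to the remaining $n_1$ degree-one vertices uniformly at random, which by the same symmetry completes the uniform partition in (b). Hence the final graph is distributed as $\CM$.
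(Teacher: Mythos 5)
Your proof is correct and follows essentially the same route as the paper, which justifies the lemma with the single observation that the vertex-index assignment (coalescing) process is performed independently of the initial uniform perfect matching. Your induction and the decomposition into an independent uniform matching plus a uniform block partition is a valid, more detailed formalization of exactly that independence/exchangeability argument.
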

\begin{remark}{\normalfont
In Algorithm~\ref{algo-CM}, the indices corresponding to the vertices with degrees at most one are assigned at the final step (S3).
It is worthwhile to note that this is not strictly necessary in order for the algorithm to work. 
In particular, since the uniform matching is created independent of the index assignments, any assignment ordering produces $\CM$ in the end.
In the proof of Lemma~\ref{lem:loc-event-approx} below, however, we will require the stated order of indexing the vertices.
}
\end{remark}

Fix any vertex $v$ of degree at least 2, and any permutation $\{\sigma_1,\sigma_2,\ldots,\sigma_{\hat{n}}\}$ of the set $\{i\in [n]:d_i>1\}$ such that $\sigma_{\hat{n}}=v$.
Denote the sequence of graphs constructed in Algorithm~\ref{algo-CM} by $\{\mathcal{G}^v(t)\}_{t\geq 0}$, i.e.,  $\mathcal{G}^v(t)$ denotes the graph at the $t^{\supth}$ step.
The $(\hat{n}-1)^{\supth}$ and $\hat{n}^{\supth}$ steps of the algorithm are schematically presented in Figure~\ref{fig:explosion}.
\begin{figure}
  \centering
  \includegraphics[scale=.5]{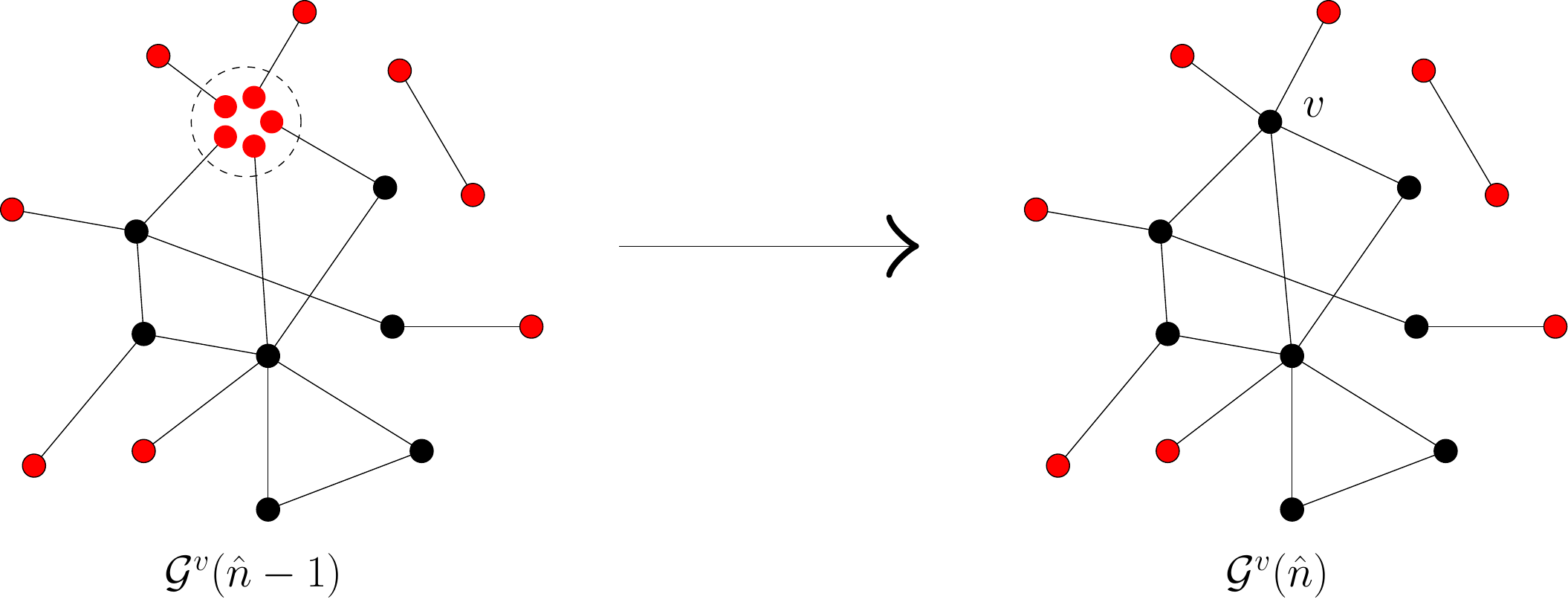}
\caption{The red vertices are the ones that have not yet been assigned any index. At $\hat{n}^{\supth}$ step, five of the unlabeled degree one vertices are selected, and the vertex $v$ is formed.}
\label{fig:explosion}
\end{figure}
Now, we complete the proof of Lemma~\ref{lem:loc-event-approx}. 
We will use the following fact:
\begin{lemma}\label{lem:maxdeg}
For any degree sequence satisfying Assumptions~\ref{assumption1}.\ref{assumption1-1},~and~\ref{assumption1}.\ref{assumption1-2}, the maximum degree $d_{\max}$ is $o(\sqrt{n})$. 
\end{lemma}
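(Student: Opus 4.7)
The plan is to derive this from uniform integrability of $\{D_n^2\}_{n \geq 1}$, which follows automatically from the two hypotheses of Assumption~\ref{assumption1}. Recall the standard fact that if a sequence of non-negative random variables $X_n$ converges weakly to $X$, and $\E[X_n] \to \E[X] < \infty$, then $\{X_n\}$ is uniformly integrable. Applying this with $X_n = D_n^2$ and $X = D^2$ (weak convergence of $D_n^2$ follows from the continuous mapping theorem applied to Assumption~\ref{assumption1}.\ref{assumption1-1}, and convergence of means is exactly Assumption~\ref{assumption1}.\ref{assumption1-2}), we get
\[
\lim_{M\to\infty}\ \sup_{n\geq 1}\ \E\bigl[D_n^2\,\ind{D_n>M}\bigr]\ =\ 0.
\]

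Now I argue by contradiction. Suppose that $d_{\max}$ is not $o(\sqrt{n})$. Then there exist $\varepsilon > 0$ and a subsequence $(n_k)_{k\geq 1}$ along which $d_{\max}^{(n_k)} \geq \varepsilon\sqrt{n_k}$. Fix any $M > 0$ and pick $k$ large enough that $\varepsilon\sqrt{n_k} > M$. Computing the truncated second moment by enumerating vertices and dropping every contribution except the one from the maximum-degree vertex gives
\[
\E\bigl[D_{n_k}^2\,\ind{D_{n_k}>M}\bigr]\ =\ \frac{1}{n_k}\sum_{i\in[n_k]} d_i^2\,\ind{d_i>M}\ \geq\ \frac{(d_{\max}^{(n_k)})^2}{n_k}\ \geq\ \varepsilon^2.
\]
Consequently $\sup_n \E[D_n^2\,\ind{D_n>M}] \geq \varepsilon^2$ for every $M$, contradicting the uniform integrability established above.

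The two steps are both essentially routine; the only content is the observation that uniform integrability of the empirical second moment forbids a single vertex from carrying a non-vanishing fraction of $\E[D_n^2]$, and that having degree of order $\sqrt{n}$ is exactly the borderline at which a single vertex \emph{does} carry a non-vanishing contribution to the empirical second moment. I do not anticipate any real obstacle; the only thing to be slightly careful about is invoking the correct direction of the weak-convergence-plus-convergence-of-moments implication (namely that it yields uniform integrability of $D_n^2$, not merely of $D_n$).
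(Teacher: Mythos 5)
Your proof is correct and rests on exactly the same mechanism as the paper's: asymptotic uniform integrability of $D_n^2$ (which the paper derives directly from Assumption~\ref{assumption1}.\ref{assumption1-2} via truncation at a fixed level $K$, rather than quoting the weak-convergence-plus-moments theorem) combined with the observation that the maximum-degree vertex alone contributes $d_{\max}^2/n$ to the truncated empirical second moment. The only difference is presentational — you argue by contradiction along a subsequence while the paper runs a direct $\limsup$ computation — so there is nothing substantive to distinguish the two.
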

\begin{proof}
For each fixed $K\geq 1$, 
$\E[D_n^2\ind{D_n\leq K}] \to \E[D^2\ind{D\leq K}],$ 
and consequently, 
$\E[D_n^2\ind{D_n>K}] \to \E[D^2\ind{D>K}].$
Thus, 
$$\lim_{K\to\infty}\lim_{n\to\infty}\E[D_n^2\ind{D_n> K}]=
\lim_{K\to\infty}\E[D^2\ind{D>K}] =0.$$ Moreover, observe that 
\begin{equation*}
 \limsup_{n\to\infty}\frac{d_{\max}^2}{n}\leq \limsup_{n\to\infty}\Big[ \frac{1}{n}\sum_{i:d_i>K}d_i^2 + \frac{K^2}{n} \Big] = \limsup_{n\to\infty}\expt{D_n^2\ind{D_n>K}}.
\end{equation*}
Since the left side of the above inequality does not depend on $K$, it follows that
\begin{equation*}
\limsup_{n\to\infty}\frac{d_{\max}^2}{n}\leq \limsup_{K\to\infty}\limsup_{n\to\infty}\expt{D_n^2\ind{D_n>K}}=0.
\end{equation*}
\end{proof}
\begin{proof}[{Proof of Lemma~\ref{lem:loc-event-approx}}]
Fix $r>0$, and $L_n$ such that $L_n\to\infty$ and $L_n/\log(n)\to 0$ as $n\to\infty.$
The proof is split into two steps: we show that
(i) $\prob{\TC_r(V_n)\setminus \LTC_r(V_n,L_n)} \to 0$, and
(ii)~$\prob{\LTC_r(V_n,L_n)\setminus \TC_r(V_n)} \to 0$.
\paragraph*{Case-(i):} Define the event $\mathrm{C}(v,r,L)$ that the vertex $v$ is within $r$ distance from a cycle of length at most $L$.
Then note that 
$$\mathrm{TC}_r(v)\setminus \mathrm{LTC}_r(v,L)\subseteq \mathrm{C}(v,r+L,2L).$$
Indeed, suppose that $\mathrm{TC}_r(v)\setminus \mathrm{LTC}_r(v,L)$ holds. 
Let $v_1\in \mathscr{C}_{\sss (1)}^2$ be such that there exists a path $\mathcal{P}_1$ of length at most $r$ from $v$ to $v_1$ (take $v_1 = v$ if $v\in \mathscr{C}_{\sss (1)}^2$). 
Now, since $v_1$ is in the two-core, there exists at least two vertex-disjoint paths (disjoint from $\mathcal{P}_1$) starting from $v_1$, and because $\mathrm{LTC}_r(v,L)$ does not happen, any two such paths must either  meet each other, or one of them intersects itself within distance $L$ from $v'$. 
In either cases a cycle of length at most $2L$ is created that is joined to $v$ via a path of length at most $r+L$, and therefore $\mathrm{C}(v,r+L,2L)$ must hold.
\begin{claim}\label{claim:not-close-short-cycle} 
Suppose that $L_n/\log(n) \to 0$. As $n\to\infty$,  
$\prob{\mathrm{C}(V_n,L_n,L_n)}\to 0$.
\end{claim}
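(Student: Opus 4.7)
The plan is to reduce the event $\mathrm{C}(V_n,L_n,L_n)$ to the presence of a cycle inside the $2L_n$-neighborhood of $V_n$ in $\CM$, and then bound that probability by a first-moment estimate on cycle formation during breadth-first exploration. If a cycle of length at most $L_n$ passes through a vertex at graph distance at most $L_n$ from $V_n$, then every vertex of that cycle is at distance at most $3L_n/2\le 2L_n$ from $V_n$, so the cycle lies entirely inside $\mathcal{N}[V_n,2L_n]$. Hence $\prob{\mathrm{C}(V_n,L_n,L_n)}\le\prob{\mathcal{N}[V_n,2L_n]\text{ contains a cycle}}$.

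The first ingredient is a bound on the size of the explored region. Let $S$ denote the sum of degrees of the vertices in $\mathcal{N}[V_n,2L_n]$. A standard step-by-step coupling of BFS in $\CM$ with the branching process $\mathcal{X}$ (root degree $D$, progeny $D^*-1$) gives that $S$ is stochastically dominated by the total degree in the first $2L_n$ generations of $\mathcal{X}$, since every BFS pairing either uncovers a new vertex mirroring $\mathcal{X}$'s offspring, or collides with the explored region without adding a new vertex, while $\mathcal{X}$ always produces the corresponding individual. Using $\E[\mathcal{Z}_l]=\mu\nu^{l-1}$ and $\E[D^*]=\E[D^2]/\mu<\infty$ by Assumption~\ref{assumption1}.\ref{assumption1-2}, this yields
\begin{equation*}
\E[S]\le \mu+\E[D^2]\sum_{l=1}^{2L_n-1}\nu^{l-1}=O(\nu^{2L_n})=n^{o(1)},
\end{equation*}
because $L_n=o(\log n)$. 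Setting $K_n=n^{1/4}$, Markov's inequality gives $\prob{S>K_n}=o(1)$.

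For the second ingredient, observe that the number of BFS pairings needed to explore $\mathcal{N}[V_n,2L_n]$ is at most $S/2\le K_n/2$ on the event $\{S\le K_n\}$, since each pairing consumes two half-edges, all of which belong to this neighborhood. At each such pairing the probability that the partner half-edge lies in the already-explored region (the only way a cycle arises) is at most $K_n/(\ell_n-2K_n)$, so a union bound gives
\begin{equation*}
\prob{\text{cycle in BFS},\, S\le K_n}\le \frac{K_n}{2}\cdot\frac{K_n}{\ell_n-2K_n}=O(n^{-1/2}).
\end{equation*}
Combining with the bound on $\prob{S>K_n}$ yields $\prob{\mathrm{C}(V_n,L_n,L_n)}\to 0$, as claimed.

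The main technical subtlety is that $S$ is revealed only through the BFS, so the bound on cycle formation and the stochastic domination must be applied to a single coupled process. I would handle this cleanly by running the BFS with a stopping rule that halts as soon as either the full $2L_n$-neighborhood is explored or the accumulated sum of degrees of discovered vertices exceeds $K_n$; the cycle bound above then holds unconditionally up to this stopping time, while $\{S>K_n\}$ is precisely the event that the stopping rule fires before completion. No third-moment assumption is required because the sum-of-degrees growth at each level is controlled by $\nu$, and the initial degree $d_{V_n}$ is $\OP(1)$ by Assumption~\ref{assumption1}.\ref{assumption1-1}, which is negligible compared to $K_n$.
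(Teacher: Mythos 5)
Your argument is correct in substance but takes a genuinely different route from the paper's. The paper bounds $\prob{\mathrm{C}(V_n,L_n,L_n)}$ by a direct first-moment count of the relevant ``lollipop'' structures (a path of length at most $L_n$ attached to a cycle of length at most $L_n$), using the exact pairing probabilities of the configuration model together with a uniform bound $\frac{1}{\ell_n-4L_n+1}\sum_i d_i(d_i-1)\le\kappa$ and the estimate $d_{\max}=o(\sqrt n)$ (Lemma~\ref{lem:maxdeg}), arriving at a bound of order $\kappa^{2L_n}/\sqrt n$. You instead prove the stronger statement that the entire $2L_n$-neighborhood of $V_n$ is a tree with high probability, by splitting into a size bound on the explored region and a collision union bound over BFS pairings; your stopping-rule device is exactly what is needed to make the two ingredients compatible, and your reduction of $\mathrm{C}(V_n,L_n,L_n)$ to a cycle inside $\mathcal{N}[V_n,2L_n]$ is valid. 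The one step you should tighten is the claimed stochastic domination of the exploration by the branching process $\mathcal{X}$ with the \emph{limiting} offspring law $D^*-1$: this is not valid in general, since the offspring in the exploration are governed by the empirical size-biased law $D_n^*$, which need not be stochastically dominated by $D^*$ for finite $n$. However, you only use this domination through $\E[S]$, and the same conclusion $\E[S]=O(\kappa^{2L_n})=n^{o(1)}$ follows directly from $\nu_n=\sum_i d_i(d_i-1)/\ell_n\to\nu$ (so $\nu_n\le\kappa$ for all $n$, by Assumption~\ref{assumption1}.\ref{assumption1-2}) together with the observation that, as long as at most $K_n=n^{1/4}$ half-edges have been consumed, each pairing reaches a vertex of degree $d$ with probability at most $(1+o(1))\,d n_d/\ell_n$. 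With that substitution your proof is complete; compared with the paper's it avoids the explicit structure count and the use of Lemma~\ref{lem:maxdeg}, at the price of the extra truncation argument, and it yields the stronger locally tree-like statement as a by-product.
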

\begin{claimproof}
In the proof we will make use of the path counting techniques as in~\cite{J09b, BDHS17}.
 Define $\ell_n':=\ell_n-4L_n+1$. 
 Note that due to Assumption~\ref{assumption1}.\ref{assumption1-2}, a constant $\kappa >1$ can be chosen such that 
\begin{equation}\label{eq:choice-k}
\frac{1}{\ell_n'}\sum_{i\in [n]}d_i(d_i-1)\leq \kappa \quad\mbox{for all}\quad n\geq 1.
\end{equation}
 The event $\mathrm{C}(V_n,L_n,L_n)$ implies that there is a path $(V_n, x_1,x_2,\ldots, x_{l})$ of length $l \leq L_n$, and $x_{l}$ belongs to a cycle $(x_{l},x_{l+1},\ldots,x_{l+m-1})$ of length $m\leq L_n$, where the $x_i$'s are distinct.
 Fix some $V_n=v$.
Then the number of structures with a path $(v, x_1,x_2,\ldots, x_l)$ and a cycle $(x_l,x_{l+1},\ldots,x_{l+m-1})$ is given by
$$\bigg[d_v\bigg(\prod_{i=1}^{l-1}d_{x_i}(d_{x_i}-1)\bigg) d_{x_l}\bigg]\times \bigg[(d_{x_l}-1) \bigg(\prod_{i=l+1}^{l+m-1}d_{x_i}(d_{x_i}-1)\bigg)(d_{x_l}-2) \bigg],$$
where the first term in the product is due to the number of ways the path can be formed, and the second is due to the cycle.
Furthermore, each of these specific configurations has probability $[(\ell_n-1)(\ell_n-3)\dots(\ell_n-2l-2m+1)]^{-1}$.
Therefore,
 \begin{equation}
 \begin{split}
  &\prob{\mathrm{C}(V_n,L_n,L_n)\given V_n=v}\\
  & \hspace{1cm}\leq \sum_{l,m\leq L_n}\sum_{x_1,\dots,x_{l+m-1}}\dfrac{(d_{x_1}-2)d_v\prod_{i=1}^{l+m-1}d_{x_i}(d_{x_i}-1)}{(\ell_n-1)(\ell_n-3)\dots(\ell_n-2l-2m+1)}\\
  & \hspace{1cm}\leq \sum_{l,m\leq L_n}\frac{1}{(\ell_n')^{m+l}} \bigg(\sum_{i\in [n]}d_i(d_i-1)\bigg)^{l+m-2}d_v\sum_{i\in [n]}d_i(d_i-1)(d_i-2) \\
   & \hspace{1cm}\leq \sum_{l,m\leq L_n} \bigg(\frac{1}{\ell'_n}\sum_{i\in [n]}d_i(d_i-1)\bigg)^{l+m-2}d_v\frac{d_{\max}}{\ell_n'}\frac{1}{\ell_n'}\sum_{i\in [n]}d_i(d_i-1)\\
   & \hspace{1cm}\leq \sum_{l,m\leq L_n} \bigg(\frac{1}{\ell'_n}\sum_{i\in [n]}d_i(d_i-1)\bigg)^{l+m-1} d_v\frac{d_{\max}}{\ell_n'} %\\
  %& \hspace{1cm} 
\quad  \leq\  K \frac{d_v\kappa ^{2L_n}}{\sqrt{n}} 
  \end{split}
 \end{equation} 
 for some constant $K>0$ where
 in the final step we have used \eqref{eq:choice-k} and Lemma~\ref{lem:maxdeg}. 
 Therefore,
 \begin{equation}
 \prob{\mathrm{C}(V_n,L_n,L_n)}\leq \frac{K\kappa ^{2L_n}}{\sqrt{n}}\expt{D_n}= K\expt{D_n}\exp\left(2L_n\log(\kappa ) - \frac{1}{2}\log(n)\right)\to 0,
 \end{equation} 
 by Assumption~\ref{assumption1}.\ref{assumption1-2}, and the fact that $L_n=o(\log(n))$. 
\end{claimproof}
Therefore, for any fixed $r\geq 1$, $\prob{\TC_r(V_n)\setminus \LTC_r(V_n,L_n)}\leq \prob{\mathrm{C}(V_n,r+L_n,2L_n)}\leq \prob{\mathrm{C}(V_n,2L_n,2L_n)}\to 0$, and the proof of part (i) is complete.
\paragraph*{Case-(ii)}
We prove this part for $r=0$. 
The proof of the general case is included at the end.
Fix any vertex $v\in [n]$, and condition on $V_n=v$. 
If $d_v\leq 1$ or $v\notin\mathscr{C}_{\sss (1)}$, then $\prob{\LTC_0(v,L_n)} = \prob{\TC_0(v)} = 0$.
So, without loss of generality assume that $d_v>1$ and $v\in \mathscr{C}_{\sss (1)}$. 
Recall the construction in Algorithm~\ref{algo-CM} and the definition of the graph $\mathcal{G}^v(t)$.
Note that, if $\LTC_0(v,L_n)\setminus \TC_0(v)$ happens, then there are two vertex-disjoint paths in $\CM$ starting from $v$, which have length at least $L_n$, but they do not meet each other.
 Furthermore, the event $\mathrm{LTC}_0(v,L_n)\setminus \mathrm{TC}_0(v)$ is determined by the graph $\mathcal{G}^v(\hat{n})$.
 Define the event $E(v)$ that, while creating the vertex with index $v$ at time $\hat{n}$, one of the degree one vertices in one of the intermediate components of $\mathcal{G}^v(\hat{n}-1)$ was chosen.
 Observe that 
 $$\mathrm{LTC}_0(v,L_n)\setminus \mathrm{TC}_0(v)\subseteq E(v).$$
Let $Q_n^v(L_n)$ denote the total number of vertices in the intermediate components of size more than $L_n$, in the graph $\mathcal{G}^v(\hat{n}-1)$. 
Using Lemmas~\ref{lem:distn-alt-cons} and~\ref{lem:maxdeg}, it follows that $\mathcal{G}^v(\hat{n}-1)$ is a configuration model given its degree sequence that satisfy Assumption~\ref{assumption1}.
\begin{claim}\label{claim:uniform-v} $\frac{1}{n}\max_{v\in [n]}\expt{Q_n^v(L_n)}\to 0$, as $n\to\infty$.
\end{claim}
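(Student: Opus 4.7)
The plan is to identify $\mathcal{G}^v(\hat n-1)$ as a configuration model with a slightly perturbed degree sequence, verify that the perturbation preserves Assumption~\ref{assumption1} uniformly in $v$, and then deduce the uniform bound from Lemma~\ref{lem:small-no-giant} via a subsequential contradiction argument.

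By Lemma~\ref{lem:distn-alt-cons}, conditionally on its degree sequence $\bld d^v$, the graph $\mathcal{G}^v(\hat n-1)$ is a configuration model. The sequence $\bld d^v$ is obtained from $\bld d$ by removing vertex $v$ (of degree $d_v$) and inserting $d_v$ fresh vertices of degree one; hence it lives on $n^v=n-1+d_v$ vertices, retains total degree $\ell_n$, and satisfies $n_k^v=n_k$ for $k\ge 2$, $k\ne d_v$, together with $n_{d_v}^v=n_{d_v}-1$ and $n_1^v=n_1+d_v$. Using Lemma~\ref{lem:maxdeg} to control $d_v\le d_{\max}=o(\sqrt n)$, I would verify that $\bld d^v$ satisfies Assumption~\ref{assumption1} with the same limit $D$, uniformly in $v$: for every fixed $k$, $|n_k^v/n^v-n_k/n|=o(1)$ uniformly; the first moment $\E[D_n^v]=\ell_n/n^v\to \E[D]$; and the second moment $\E[(D_n^v)^2]=(n\E[D_n^2]-d_v^2+d_v)/n^v\to\E[D^2]$, all uniformly in $v$ since $d_v^2/n=o(1)$. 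The positive proportion of degree-one vertices is only strengthened.

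The claim then follows by contradiction. If $\tfrac{1}{n}\max_v \E[Q_n^v(L_n)]\not\to 0$, there exist $\delta>0$, a subsequence $n_j\to\infty$, and $v_j\in[n_j]$ such that $\E[Q_{n_j}^{v_j}(L_{n_j})]\ge \delta n_j$. Along this subsequence, $\mathcal{G}^{v_j}(\hat n_j-1)$ is a configuration model whose degree sequence $\bld d^{v_j}$ satisfies Assumption~\ref{assumption1} with the same limit $D$, so Theorem~\ref{thm:JL09} and Proposition~\ref{prop-coupling} hold with the same branching-process limit $\mathcal X$. Reproducing the proof of Lemma~\ref{lem:small-no-giant} verbatim for the sequence $(\mathcal{G}^{v_j}(\hat n_j-1))_j$ then yields $\E[Q_{n_j}^{v_j}(L_{n_j})]/n_j\to 0$, the desired contradiction.

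The main obstacle is justifying that the ingredients behind Lemma~\ref{lem:small-no-giant}, in particular the coupling in Proposition~\ref{prop-coupling}, transfer without loss to the varying degree sequences $\bld d^{v_j}$. This is where the $o(\sqrt n)$ bound on the maximum degree is essential: the change in at most $d_{\max}$ entries of the degree sequence is too small to affect either the limit distribution $D$ or its first two moments uniformly, so the BFS-branching coupling at depth $L_n$ remains valid along the subsequence.
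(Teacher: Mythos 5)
Your proposal is correct, and it reaches the conclusion by a genuinely different mechanism than the paper. Both arguments begin the same way: identify $\mathcal{G}^v(\hat n-1)$ as a configuration model (Lemma~\ref{lem:distn-alt-cons}) whose degree sequence is a small perturbation of $\bld d$, controlled via $d_{\max}=o(\sqrt n)$ (Lemma~\ref{lem:maxdeg}). The paper then obtains uniformity in $v$ by going \emph{inside} the proof of the coupling: it argues that the quantitative error bounds behind Proposition~\ref{prop-coupling} (the estimates of \cite[Lemma 5.6]{RGCN2}) depend on the degree sequence only through $\ell_n$ and $\nu_n^v$, and that $\max_v|\nu_n^v-\nu_n|\to 0$, so the conclusion of Lemma~\ref{lem:small-no-giant} holds with errors uniform over $v$. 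You instead treat Proposition~\ref{prop-coupling} and Lemma~\ref{lem:small-no-giant} as black boxes and extract uniformity by a diagonal subsequence argument: negate the claim, pick $(n_j,v_j)$ witnessing the failure, verify that the diagonal sequence of degree sequences $(\bld d^{v_j})_j$ itself satisfies Assumption~\ref{assumption1} with the same limit $D$ (your bookkeeping of $n^v$, $n_k^v$ and the first two moments is right, and the first-moment/second-moment computations do hold uniformly in $v$ because $d_v^2\le d_{\max}^2=o(n)$), and then apply the pointwise result to that sequence to reach a contradiction. This works because Assumption~\ref{assumption1} is the \emph{only} hypothesis of Theorem~\ref{thm:JL09}, Proposition~\ref{prop-coupling} and Lemma~\ref{lem:small-no-giant}, so they apply verbatim to any sequence of degree sequences satisfying it, including your diagonal one (the harmless discrepancy $\tilde n_j=n_j-1+d_{v_j}=n_j(1+o(1))$ in the normalization is absorbed). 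What your route buys is that you never need to reopen the proof of the coupling or track how its error terms depend on the degree sequence; what it costs is that it is purely qualitative, whereas the paper's route would, in principle, yield explicit rates. Either way, the claim is established.
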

\begin{claimproof}
 Note that an application of Lemma~\ref{lem:small-no-giant} directly implies that  $n^{-1}\expt{Q_n^v(L_n)}\to 0$, for any fixed $v$. Let $(d_i^v)$ denote the degree sequence of $\mathcal{G}^v(\hat{n}-1)$ and let $\nu_n^{v}:= \sum_i d_i^v(d_i^v-1)/\sum_{i} d_i^v$. Observe that 
 (i) $\sum_id_i^v = \ell_n$, (ii) $\sum_id_i^v(d_i^v-1) = \sum_{i\in [n]}d_i(d_i-1)+O(d_{\max}^2)$. Therefore, we get (iii) $\max_{v\in [n]} |\nu_n^v-\nu_n|\to 0$ as $n\to\infty$. 
Now, while approximating the breadth-first exploration of $\mathcal{G}^v(\hat{n}-1)$ by a suitable branching process in \eqref{eq:coupling}, one can in fact obtain error estimates that are uniform over $v$. 
This is a consequence of the precise bounds stated in \cite[Lemma 5.6]{RGCN2}, that are used as the main ingredient for the proof of~\cite[Proposition~5.4]{RGCN2}.
 Therefore, while proving \eqref{giant-large} for the graph $\mathcal{G}^v(\hat{n}-1)$, one can  use (i) and (iii) above to get error estimates that are uniform in $v$. Thus, the claim follows.
\end{claimproof}
\noindent
Finally, we bound the probability of the event $E(V_n)$. 
%To this end, note that for any fixed vertex $v$, the event $E(v)$ occurs if one of the degree one vertices clumped to form $v$ is chosen from an intermediate component in $\mathcal{G}^v(\hat{n}-1)$. 
Note that, 
in $\mathcal{G}^v(\hat{n}-1)$ there are $n_1+d_v-1$ degree one vertices.
Therefore, conditional on $\mathcal{G}^v(\hat{n}-1)$, the vertex $v$ is created at step $\hat{n}$ by choosing $d_v$ vertices from a set of $n_1+d_v-1$ vertices, and $E(v)$ occurs if at least one of those degree one vertices is from an intermediate component (for which there are at most $Q_n^v(L_n)$ choices). Thus,
$$\prob{E(v)} \leq \frac{d_v}{n_1+d_v-1}\expt{Q_n^v(L_n)}$$
%
%\textcolor{red}{
%This probability is thus dominated by $\PR(W>0)$, where $W$ is a Hypergeometric random variable with population size $n_1 + d_v -1$ containing $Q_n^v(L_n)$ successes, and sample size $d_v$. }
Again,  by Assumption~\ref{assumption1}, there exists a constant $K>0$, such that $n_1+d_v-1\geq \ell_n/K$ for all large $n$. Hence, 
\begin{equation}
 \begin{split}
  \prob{E(V_n)}&=\frac{1}{n}\sum_{v\in [n]} \prob{E(v)}\leq \sum_{v\in [n]}\frac{d_v}{n_1+d_v-1}\frac{1}{n}\expt{Q_n^v(L_n)}\\
  &\leq  \frac{K}{n}\Big(\max_{v\in [n]}\expt{Q_n^v(L_n)}\Big)\sum_{v\in [n]}\frac{d_v}{\ell_n}
  \leq \frac{K}{n}\max_{v\in [n]}\expt{Q_n^v(L_n)}\to 0,
 \end{split}
\end{equation}
where the last step  follows from Claim~\ref{claim:uniform-v}.
Thus it follows that
\begin{equation}\label{local-event-eq}
\PR(\mathrm{LTC}_0(V_n,L_n)\setminus \mathrm{TC}_0(V_n))=o(1).
\end{equation}
To see the general case for $d\geq 1$, note that \eqref{local-event-eq} implies $\E[\#\{v\in [n]: \mathrm{LTC}_0(v,L)\setminus \mathrm{TC}_0(v) \text{ occurs} \}]/n\to 0$.  Using Lemma~\ref{lem:small-nbd}, it now follows that the fraction of vertices which are within the $d$ neighborhood of a vertex $v'$ for which $\mathrm{LTC}_0(v',L)\setminus \mathrm{TC}_0(v') $ occurs converges to zero in $L^1$. Therefore, $\PR(\mathrm{LTC}_d(V_n,L)\setminus \mathrm{TC}_d(V_n))=o(1)$, and the proof is complete.
\end{proof}

\section{Proof for the Max-Cut}
\label{sec:proof_maxcut}
We prove Theorem~\ref{thm:max-cut} in this section.
The proof for the sub/supercritical cases in Theorem~\ref{thm:max-cut} (i) and (ii) are provided in Sections~\ref{ssec:maxcut-sub} and~\ref{ssec:maxcut-sup}, respectively.
The case for large mean degree stated in Theorem~\ref{thm:max-cut} (iii) is proved in Section~\ref{ssec:large-mean}  

\subsection{Subcritical case}\label{ssec:maxcut-sub}
The idea in the subcritical regime is to count the number of cycles.
This idea has also been adopted in the proof of~\cite[Theorem 19]{CGHS04} for Erd\H{o}s-R\'enyi random graphs.
Observe that the bipartite components (components with no cycles or only cycles of even length) contribute all of their edges to the Max-Cut.
To analyze the non-bipartite components we first observe in Lemma~\ref{lem:all-unicyclic-comp} that all the components of a subcritical $\CM$ are unicyclic (contains only one cycle) with high probability. 
%
%
%
%Also, if there is an odd cycle, then the max-cut must leave one edge uncut.
%It is thus of prime importance to count the number of cycles. 
%But one challenge might be caused by overlapping cycles, since then it becomes difficult to count the number of uncut edges.
%This challenge is overcome by a known result, stated in Lemma~\ref{lem:all-unicyclic-comp} below. 
%It states that in the sub-critical regime, with high probability there are no such overlapping cycles in $\CM$.
%
%
\begin{lemma}[{\cite[Theorem~1.2~(b)]{HM2012}}] \label{lem:all-unicyclic-comp}
For subcritical $\CM$ satisfying {\rm Assumption~\ref{assumption1}}, the probability that there exists a component with more than one cycle tends to zero as $n\to\infty$.
\end{lemma}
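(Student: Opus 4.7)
\medskip

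\noindent\textbf{Proof proposal for Lemma~\ref{lem:all-unicyclic-comp}.}
The plan is to run a first-moment argument on \emph{bicyclic skeletons}. A component of $\CM$ has more than one cycle, equivalently cycle rank at least $2$ (number of edges minus number of vertices plus one), if and only if it contains a connected subgraph with cycle rank exactly $2$. Such a minimal subgraph must be of one of three topological types: a theta graph (two vertices joined by three internally vertex-disjoint paths of lengths $\ell_1,\ell_2,\ell_3$), a figure-eight (two cycles of lengths $\ell_1,\ell_2$ sharing exactly one vertex), or a dumbbell (two vertex-disjoint cycles of lengths $\ell_1,\ell_2$ joined by a path of length $\ell_3\geq 1$). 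Let $X_n$ denote the total number of subgraphs of $\CM$ of these three types. It suffices to show that $\expt{X_n}\to 0$, since then Markov's inequality gives $\prob{X_n\geq 1}\to 0$.

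I would compute $\expt{X_n}$ using the standard factorial formula for the probability that a prescribed set of half-edges is paired up in $\CM$: if a subgraph $H$ uses $e$ edges, a specific pairing realizing $H$ occurs with probability $\big[\prod_{i=1}^{e}(\ell_n-2i+1)\big]^{-1}\leq C\ell_n^{-e}$. For each bicyclic skeleton shape with $v$ vertices and $e=v+1$ edges and prescribed skeleton-degrees $\delta_1,\ldots,\delta_v\in\{2,3\}$, summing over embeddings $[n]^{(v)}$ and using Assumption~\ref{assumption1}.\ref{assumption1-2} (finite first and second moments of $D$), the expected number of copies of that specific shape is bounded by
\begin{equation*}
\frac{1}{\ell_n^{e}}\prod_{j=1}^{v}\sum_{i\in[n]}(d_i)_{\delta_j}
\;\leq\; \frac{C}{n}\prod_{j=1}^{v}\Big(\frac{1}{\ell_n}\sum_{i\in[n]}d_i(d_i-1)\Big)^{\mathbbm{1}\{\delta_j=3\}}\Big(\frac{1}{\ell_n}\sum_{i\in[n]}d_i(d_i-1)\Big)^{\mathbbm{1}\{\delta_j=2\}},
\end{equation*}
where the extra factor $1/n$ arises from the fact that $e=v+1$. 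This factorizes into $(1/n)$ times a product of $v$ terms each of which is essentially $\nu$ (plus error), so each shape contributes order $\nu^{\ell_1+\ell_2+\cdots}/n$ to $\expt{X_n}$, up to a uniform constant.

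To aggregate, I would sum over the parameters $(\ell_1,\ell_2,\ell_3)$, observing that the counting produces a bound of the form
\begin{equation*}
\expt{X_n}\;\leq\;\frac{C}{n}\sum_{\ell_1,\ell_2,\ell_3\geq 1}\nu^{\ell_1+\ell_2+\ell_3}\;=\;\frac{C}{n}\Big(\frac{\nu}{1-\nu}\Big)^{3},
\end{equation*}
which is $O(1/n)$ because $\nu<1$ makes the geometric series converge. Combining the three topological types yields $\expt{X_n}\to 0$ as $n\to\infty$, proving the lemma.

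The main obstacle I anticipate is maintaining the $\nu^{\ell}$ factor cleanly when summing over embeddings: one must replace the sums $\sum_i d_i(d_i-1)$ along the interior vertices of each path by the asymptotic value $\nu \ell_n$, and control the resulting error uniformly in the path lengths. A second subtlety is that the three bicyclic shapes share boundary cases (e.g.\ degenerate theta with $\ell_1=0$ becomes a figure-eight), which I would handle by fixing disjoint canonical parameterizations so as not to double-count.
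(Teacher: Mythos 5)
The paper does not actually prove this lemma: it is quoted directly from the literature as \cite[Theorem~1.2~(b)]{HM2012}, so your first-moment argument over bicyclic skeletons is a genuinely self-contained alternative, and it is the standard route to such a statement. Your classification of minimal excess-two subgraphs into theta graphs, figure-eights and dumbbells is correct (keeping in mind that $\CM$ is a multigraph, so lengths $1$ and $2$ must be allowed), and the overall strategy is sound and very close in spirit to the path-counting the paper does carry out in Claim~\ref{claim:not-close-short-cycle} and Lemma~\ref{lem:l1-tight-cycle}.

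There are, however, two concrete gaps. First, your per-shape bound with a uniform constant $C$ silently requires control of $\sum_{i}d_i(d_i-1)(d_i-2)$ at the two skeleton vertices of degree $3$ (or the one vertex of degree $4$ in a figure-eight). Under Assumption~\ref{assumption1} only the second moment converges, so this sum is \emph{not} $O(\ell_n)$; the correct handling is to extract one factor $d_{\max}$ per branch vertex and invoke $d_{\max}=o(\sqrt{n})$ (Lemma~\ref{lem:maxdeg}), exactly as in Claim~\ref{claim:not-close-short-cycle}. With two branch vertices this yields a global factor $d_{\max}^2/\ell_n=o(1)$ rather than your claimed $O(1/n)$ --- still enough to conclude, but the bookkeeping in your display is wrong as written (and the exponent of $\ell_n$ must be recounted accordingly). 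Second, summing $\nu^{\ell_1+\ell_2+\ell_3}$ over \emph{all} lengths with a uniform constant is not justified: the exact pairing probability is $\prod_{i=1}^{e}(\ell_n-2i+1)^{-1}$, and the ratio of this to $\ell_n^{-e}$ is $\exp\big(\Theta(e^2/\ell_n)\big)$, which is bounded only for $e=O(\sqrt{n})$. For total length beyond $\sqrt{n}$ the geometric decay can be overwhelmed, so you must truncate the sum at $e\le\sqrt{n}$ and dispose of longer configurations separately, e.g.\ via $|\mathscr{C}_{\max}|=\OP(d_{\max})=o_{\sss\PR}(\sqrt{n})$ in the subcritical regime, which is precisely how the paper handles the analogous issue in the proof of Lemma~\ref{lem:l1-tight-cycle}. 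With these two repairs your argument goes through.
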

Observe that  the Max-Cut leaves precisely one edge uncut in each of these unicyclic, non-bipartite components.
Therefore, the number of uncut edges in the Max-Cut is with high probability equal to the number of cycles of odd length that the graph contains.
Now, the asymptotic number of cycles of length $k$ in $\CM$, for any fixed $k\geq 1$, is derived in~\cite[Theorem 2.18]{B2012}, and is stated in the following lemma.
Let $C_k^n$ denote the number of cycles of length $k$ in $\CM$ (a cycle of length one denotes a loop and of length two denotes a multiple edge).
\begin{lemma}[{\cite[Theorem 2.18]{B2012}}] \label{lem:poi-cycle} 
Consider $\CM$ satisfying {\rm Assumption~\ref{assumption1}}. Then, for any $K\geq 1$, as $n\to\infty$,
\begin{equation}
 (C_k^n)_{k\in [K]} \dto (X_k)_{k\in [K]},
\end{equation}where $X_k \sim \mathrm{Poisson} (\nu^k/2k)$, independently for $k\in [K]$.
\end{lemma}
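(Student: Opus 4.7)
The plan is to establish joint Poisson convergence via the method of factorial moments (Brun's sieve). A vector of independent Poissons $(X_k)_{k \in [K]}$ with means $\lambda_k = \nu^k / (2k)$ is uniquely characterized by its joint factorial moments
\[
\E\Bigl[\prod_{k=1}^K (X_k)_{(r_k)}\Bigr] = \prod_{k=1}^K \lambda_k^{r_k}, \qquad (x)_{(r)} := x(x-1)\cdots(x-r+1),
\]
so it suffices to show that for each fixed vector $(r_1, \dots, r_K)$ of non-negative integers,
\[
\E\Bigl[\prod_{k=1}^K (C_k^n)_{(r_k)}\Bigr] \longrightarrow \prod_{k=1}^K \Bigl(\frac{\nu^k}{2k}\Bigr)^{r_k}.
\]

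The first step handles a single cycle count. The quantity $2k \cdot C_k^n$ equals the number of cyclically ordered sequences of $k$ distinct vertices $(v_1, \dots, v_k)$ together with a choice of two distinguished half-edges at each $v_i$ that happen to be paired into a $k$-cycle in $\CM$. The number of such ordered choices is $\sum_{v_1, \dots, v_k \text{ distinct}} \prod_i d_{v_i}(d_{v_i} - 1)$, and conditional on the half-edge selection the probability that the uniform matching creates the cycle is exactly $1 / [(\ell_n - 1)(\ell_n - 3) \cdots (\ell_n - 2k + 1)]$. Dropping the distinctness constraint changes the sum by a factor $1 + O(1/n)$ (since restricting two indices to agree leaves $O(n^{k-1})$ terms, each uniformly bounded thanks to the second moment assumption), and the denominator equals $\ell_n^k(1 + O(k^2/\ell_n))$. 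Hence
\[
\E[C_k^n] = \frac{1}{2k} \Bigl(\frac{\sum_v d_v(d_v-1)}{\ell_n}\Bigr)^k (1 + o(1)) \longrightarrow \frac{\nu^k}{2k},
\]
using Assumption~\ref{assumption1} to pass to the limit.

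The second step generalizes this enumeration to the joint factorial moment. Here $\prod_k (C_k^n)_{(r_k)}$ counts ordered tuples of pairwise distinct cycles with $r_k$ cycles of length $k$ for each $k$, involving a total of $m := \sum_k k r_k$ edges and $2m$ paired half-edges. The same calculation applies: enumerate ordered cyclic vertex sequences for each cycle, multiply by $\prod_i d_{v_i}(d_{v_i}-1)$ factors, and use the conditional pairing probability $1/[(\ell_n-1)(\ell_n-3) \cdots (\ell_n - 2m + 1)]$. The dominant contribution comes from configurations in which all vertices across all cycles are distinct, which factorizes cleanly over cycles. The per-cycle symmetry factor of $2k$ and the $r_k!$ from symmetrizing among the $r_k$ copies of $k$-cycles combine to give the target $\prod_k (\nu^k / (2k))^{r_k}$.

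The main obstacle is bookkeeping the error terms and verifying that the contribution from configurations with coincident vertices or with cycles sharing half-edges is $o(1)$. This reduces to two estimates that follow directly from Assumption~\ref{assumption1}.\ref{assumption1-2}: (a) $\sum_v d_v(d_v - 1) = \Theta(n)$ with bounded ratio to $\ell_n$, so the free sums behave like $\nu^m \ell_n^m$; and (b) fixing any two of the $m$ half-edge slots to land at the same vertex or forcing a vertex repetition costs a factor of $O(d_{\max}^2 / n) = o(1)$ by Lemma~\ref{lem:maxdeg}. Once these estimates are in place, the factorization of joint factorial moments is automatic, and Brun's sieve delivers the joint Poisson limit.
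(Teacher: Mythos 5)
The paper does not prove this lemma at all: it is imported verbatim as \cite[Theorem 2.18]{B2012}, so there is no in-paper argument to compare against. Your factorial-moment (Brun's sieve) proof is the standard route to this classical fact, and it is essentially sound; indeed the first-moment computation you carry out is the same path-counting the paper itself performs inside the proof of Lemma~\ref{lem:l1-tight-cycle}, where $\E[C_k^n]$ is bounded by $\frac{1}{2k}\big(\sum_v d_v(d_v-1)/\ell_n\big)^k(1+o(1))$ via exactly your enumeration of decorated cyclic vertex sequences and the pairing probability $[(\ell_n-1)(\ell_n-3)\cdots(\ell_n-2k+1)]^{-1}$. What your write-up adds over the citation is the joint factorial-moment step and the error control for overlapping configurations, which you correctly identify as the only real work. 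One small imprecision: in your first step you justify dropping the distinctness constraint by saying the coincidence terms are ``$O(n^{k-1})$ terms, each uniformly bounded thanks to the second moment assumption''; the second moment assumption does \emph{not} bound individual terms (degrees may be unbounded), and the correct estimate is the one you give later in item (b), namely $\sum_v [d_v(d_v-1)]^2 \leq d_{\max}^2\sum_v d_v(d_v-1) = o(n^2)$ by Lemma~\ref{lem:maxdeg}, giving a relative error $o(1)$ rather than $O(1/n)$. The same $d_{\max}^2/n = o(1)$ bound is what controls cycles sharing a vertex in the joint moment, and configurations sharing half-edges are handled by the standard observation that they reduce the count of vertex choices faster than they increase the pairing probability. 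With that correction your argument is complete in outline and supplies a self-contained proof of what the paper only cites.
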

The next lemma proves that with high probability, there are no cycles of growing length.
This will be used to show that asymptotically, the total number of odd-length cycles is equal to the sum of
the number of all cycles of finite and odd length.

\begin{lemma} \label{lem:l1-tight-cycle} Consider a subcritical $\CM$ satisfying {\rm Assumption~\ref{assumption1}}. Then, 
\begin{equation}
 \lim_{K\to\infty}\lim_{n\to\infty} \prob{\exists\ k>K: C_k^n\geq 1 } = 0.
\end{equation}
\end{lemma}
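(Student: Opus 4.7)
My plan is a first-moment argument that splits the range of cycle lengths into a moderate band $K<k\leq \varepsilon n$, controlled by Markov's inequality, and a tail $k>\varepsilon n$, controlled by the sublinearity of the largest component in the subcritical regime.

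The starting point is the standard path-counting identity for the configuration model, already invoked in the proof of Claim~\ref{claim:not-close-short-cycle}:
\begin{equation*}
\E[C_k^n]\ =\ \frac{1}{2k}\sum_{(v_1,\ldots,v_k)\text{ distinct}}\frac{\prod_{i=1}^k d_{v_i}(d_{v_i}-1)}{(\ell_n-1)(\ell_n-3)\cdots(\ell_n-2k+1)}.
\end{equation*}
Writing $\nu_n:=\sum_{v\in[n]} d_v(d_v-1)/\ell_n$, which converges to $\nu<1$ by Assumption~\ref{assumption1}.\ref{assumption1-2}, I would bound the numerator sum by $(\nu_n\ell_n)^k$ and apply the inequality $1-x\geq e^{-2x}$ on $[0,1/2]$ termwise in the denominator to obtain
\begin{equation*}
\E[C_k^n]\ \leq\ \frac{\nu_n^k}{2k}\exp\!\Big(\frac{2k^2}{\ell_n}\Big),\qquad k\leq \ell_n/4.
\end{equation*}

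The next step is to choose $\varepsilon=\varepsilon(\mu,\nu)>0$ so that $2k^2/\ell_n\leq \tfrac{1}{2}|\log\nu|\,k$ whenever $k\leq \varepsilon n$ and $n$ is large; since $\ell_n\sim \mu n$, this amounts to taking $\varepsilon\leq \mu|\log\nu|/8$. With this choice the previous bound becomes $\E[C_k^n]\leq \rho^k/(2k)$ for some fixed $\rho\in(\sqrt{\nu},1)$ and all sufficiently large $n$, so summing a geometric series and applying Markov's inequality gives, uniformly in large $n$,
\begin{equation*}
\PR\bigl(\exists\,K<k\leq \varepsilon n:\, C_k^n\geq 1\bigr)\ \leq\ \sum_{k>K}\frac{\rho^k}{2k}\ \longrightarrow\ 0\quad\text{as }K\to\infty.
\end{equation*}

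For the complementary range $k>\varepsilon n$, any cycle of length $k$ must be contained in a component of size at least $\varepsilon n$, so the event $\{\exists k>\varepsilon n: C_k^n\geq 1\}$ is contained in $\{|\mathscr{C}_{\sss (1)}|>\varepsilon n\}$; since $\nu<1$ forces $\eta=0$, Theorem~\ref{thm:JL09} implies this probability vanishes as $n\to\infty$. Combining the two estimates, taking $n\to\infty$ first and then $K\to\infty$, proves the lemma. The main technical obstacle I anticipate is the calibration of $\varepsilon$: the Gaussian correction $e^{2k^2/\ell_n}$ coming from the falling factorial in the denominator degenerates once $k$ grows with $n$, and $\varepsilon$ must be chosen small enough (in terms of $\mu$ and $|\log\nu|$) so that this correction is absorbed into the geometric decay $\nu_n^k$. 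This calibration is exactly where the second-moment assumption and the strict subcriticality $\nu<1$ both enter.
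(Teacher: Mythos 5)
Your argument is correct, and it follows the same two-part skeleton as the paper's proof (first-moment path counting for moderate cycle lengths, then a largest-component bound for long cycles), but the split point and the tail input differ in a way worth noting. The paper cuts at $k=\sqrt{n}$: for $K<k\leq\sqrt{n}$ it controls the falling-factorial correction by showing via Cauchy--Schwarz that $M=\ell_n/2<n$ in the subcritical regime, and for $k>\sqrt{n}$ it invokes the external estimate $|\mathscr{C}_{\max}|=\OP(d_{\max})=o(\sqrt{n})$ from Janson's subcritical component paper. You instead cut at $k=\varepsilon n$, absorbing the correction $\e^{2k^2/\ell_n}$ into the geometric decay by calibrating $\varepsilon$ against $\mu$ and $|\log\nu|$ (you should also impose $\varepsilon\leq\mu/8$, say, so that the termwise bound $1-x\geq \e^{-2x}$ applies throughout the denominator, but this is trivial), and for $k>\varepsilon n$ you only need $|\mathscr{C}_{\sss(1)}|=\oP(n)$, which is already available from Theorem~\ref{thm:JL09} since $\eta=0$ when $\nu<1$. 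Your version is therefore slightly more self-contained---it avoids the citation of the sharper $\OP(d_{\max})$ bound---at the price of a more delicate choice of the cutoff; the paper's version keeps the moderate range short enough that the correction term is bounded by a constant with no tuning, but must then pay for the tail with a stronger structural result.
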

Lemma~\ref{lem:l1-tight-cycle} is proved at the end of this subsection. 
Now, we prove result for the subcritical Max-Cut by using Lemmas~\ref{lem:all-unicyclic-comp}, \ref{lem:poi-cycle} and~\ref{lem:l1-tight-cycle}

\begin{proof}[Proof of Theorem~\ref{thm:max-cut}~(i)] 
As mentioned earlier, the Max-Cut leaves precisely one edge uncut in each of the unicyclic, non-bipartite components, and by Lemma~\ref{lem:all-unicyclic-comp}, with high probability, the total number of uncut edges precisely equals to the total number of odd-length cycles.
Therefore, recalling that the total number of edges equals $\ell_n/2$, it follows that $ \ell_n/2-  \Mcut (\CM)= \sum_{k\geq 1, k \text{ is odd}}C_k^n$, with high probability.
Hence, Lemmas~\ref{lem:poi-cycle} and~\ref{lem:l1-tight-cycle} yield, as $n\to\infty$,
\begin{equation*}
\frac{\ell_n}{2}- \Mcut (\CM)=\sum_{k\geq 1, k \text{ is odd}}C_k^n \dto X,\quad  X\sim \mathrm{Poisson}\bigg(\sum_{\substack{k\geq 1,\\k \text{ is odd}}}\frac{\nu^k}{2k}\bigg).
\end{equation*} 
\end{proof}

\begin{proof}[{Proof of Lemma~\ref{lem:l1-tight-cycle}}]
For brevity of notation, denote by $M$ the total number of edges, i.e., $M=\ell_n/2$.
We find the expected value of $C_k^n$ using again the path-counting techniques.
 To this end, we first fix $k$ distinct vertices $x_1,\dots,x_k$ which participate in the cycle in the given order. We denote by $\mathcal{I}_k = \{(x_1,\dots,x_k):x_i\neq x_j,\ \forall i\neq j\}$. For each vertex $x_i$, the two half-edges which participate in the cycle may be chosen in $d_{x_i} ( d_{x_i}-1)$ ways. The number of ways to pair these half-edges is thus $\prod_i d_{x_i}(d_{x_i}-1)$. 
% However, this argument counts each cycle $2k$ times, depending on $k$ different starting points and two different orientations. Thus we divide this count by $2k$ to get the correct number of pairings containing this cycle. 
%Note that the remaining  $2M-2k$) half-edges can be paired among themselves in ${2M \choose M} \frac{M!}{2^M}$ (resp. ${2M-2k \choose M-k} \frac{(M-k)!}{2^{M-k}}$) ways. 
For any fixed $R\geq 1$, $2R$ half-edges can be paired among each other in $\binom{2R}{R} R!/2^R$. 
Therefore,
\begin{equation}
\begin{split}
\mathbbm{E}[C_k^n]
&=\sum_{\mathcal{I}_k}\prod_{i=1}^k d_{x_i}(d_{x_i}-1)\frac{ \binom{2M-2k}{ M-k}\frac{(M-k)!}{2^{M-k}}}{\binom{2M}{M}\frac{M!}{2^{M}}}\\
&\leq \bigg(\frac{1}{(n)_k}\sum_{\mathcal{I}_k}\prod_{i=1}^k d_{x_i}(d_{x_i}-1)\bigg)\frac{2^k(n)_k(M)_k}{(2M)_{2k}}\\
&\leq\bigg(\frac{1}{n}\sum_{i\in [n]} d_i(d_i-1)\bigg)^k\frac{2^k(M)_k}{(n)_k(2M)_{2k}},
\end{split}
\end{equation}where the last step follows from \cite[Theorem 52]{HLP53} (see also the proof of \cite[Lemma 5.1]{J09c}).
Now, using Stirling's formula we have,
$$(n)_k= \exp[k\ln (n)-k^2/2n-O(k/n+k^3/n^2)].$$
In analogy with \eqref{eq:nu}, we define $\nu_n := \mathbbm{E}[D_n(D_n-1)]/\mathbbm{E}[D_n]$. 
Therefore,
\begin{align}
&\mathbbm{E}[\#\mbox{ cycles in }\mathrm{CM}_n(\boldsymbol{d}) \text{ of lengths in } (K, \sqrt{n})]=\sum_{k=K+1}^{\sqrt{n}}\mathbbm{E}[C_k^n] \nonumber \\
&\leq \kappa_1\sum_{k=K+1}^{\sqrt{n}} \nu_n^k\bigg(\frac{1}{n}\sum_{i\in [n]} d_i\bigg)^k 2^k\exp\bigg(k\ln (n)-\frac{k^2}{2n}+k\ln( M)-\frac{k^2}{2M}-2k\ln (2M)+\frac{k^2}{M}\bigg) \nonumber \\   
&\leq \kappa_1\sum_{k=K+1}^{\sqrt{n}} \nu_n^k\exp\bigg\{-\frac{k^2}{2}\Big(\frac{1}{n}-\frac{1}{M}\Big)\bigg\}, \nonumber
\end{align}where the constant $\kappa_1>0$ can be chosen to be independent of $K$.
Now, for subcritical $\mathrm{CM}_n(\boldsymbol{d})$, we have $M<n$. To see this, note that by the Cauchy-Schwarz inequality,
  \begin{equation}
   M=\frac{1}{2} \sum_{i\in [n]}d_i \leq \frac{\sqrt{n}}{2}\sqrt{\sum_{i\in [n]}d_i^2}= \frac{\sqrt{n}}{2}\sqrt{2M(1+\nu_n)}.
  \end{equation}
  Taking the square on both sides and using the fact that since $\nu_n< 1$, we get
  \begin{align*}
  M\leq \frac{n}{2}(1+\nu_n)< n.
  \end{align*}
  Therefore, $(1/M-1/n)>0$, and hence,
  $$\max_{k\leq \sqrt{n}}\exp\left\{-\frac{k^2}{2}\left(\frac{1}{n}-\frac{1}{M}\right)\right\}\leq \exp\left\{-\frac{n}{2}\left(\frac{1}{n}-\frac{1}{M}\right)\right\}\leq \kappa_2, $$where the constant $\kappa_2>0$ is independent of $K$. 
  %The minimum of the above quantity is also lower bounded by some other constant $C'>0$. 
  Thus,
  \begin{equation}\label{cycle-k-sqrt-n}
  \mathbbm{E}(\#\mbox{ cycles in }\mathrm{CM}_n(\boldsymbol{d}) \text{ of length in }  (K,\sqrt{n}))=\kappa_1\kappa_2\sum_{k=K+1}^{\infty}\nu_n^k\to 0,
  \end{equation}if we first take $n\to\infty$ and then $K\to\infty$.
  To count the number of cycles of length $>\sqrt{n}$, note that
  \begin{equation}\label{cycle:geq-sqrtn}
   \begin{split} 
   &\prob{\exists\text{ a cycle of length more than } \sqrt{n}}\leq \prob{\exists i\geq 1: |\mathscr{C}_{(i)}|>\sqrt{n}}= \prob{|\mathscr{C}_{\max} |>\sqrt{n}}.
   \end{split}
  \end{equation}  
Now, an application of \cite[Theorem 1.3]{J08} yields that $|\mathscr{C}_{\max}| = \OP(d_{\max})=o(n^{1/2})$ and therefore the probability in \eqref{cycle:geq-sqrtn} tends to 0 as $n\to \infty$. 
The proof of Lemma~\ref{lem:l1-tight-cycle} is now complete by combining \eqref{cycle-k-sqrt-n}, and \eqref{cycle:geq-sqrtn}.
\end{proof}
\subsection{Supercritical case}\label{ssec:maxcut-sup}

The proof for the supercritical case builds upon the following idea: 
the fact that a graph has small Max-Cut implies that deletion of a small number of edges can make the graph bipartite.
When the graph is supercritical, deletion of a small number of edges can still leave it supercritical.
In that case, if one can show that the  probability of the latter supercritical graph being bipartite is small, then  the original supercritical cannot have a small Max-Cut.

This idea has been leveraged in~\cite[Theorem 21]{CGHS04} to prove the phase-transition of Max-Cut result for the Erd\H{o}s-R\'enyi random graph.
The main challenge of implementing this idea for the configuration model is that if a set of edges is deleted from a configuration model (possibly depending on the outcome of the random graph topology), then the edge-deleted graph is not distributed as a configuration model given its degree sequence.
It thus becomes challenging to approximate the probability that after a number of edge deletion the graph becomes bipartite.
Inspired by the above issues, in case of $\CM$ we introduce a notion of \emph{blowing up} vertices.
In a way, \emph{blowing up} a vertex is the reverse process of forming a vertex at (S2) of Algorithm~\ref{algo-CM}.
Let $G=(V,E)$ be any graph. 
Also let $v\in V$ be a vertex of degree $d_v\geq 2$ with $\{u_1,u_2,\ldots,u_{d_v}\}$ being the set of neighbors in $G$.
Then define the graph $G_{b}(v)$ as follows: replace $v$ by a collection of $d_v$ degree one vertices $\{v_1,v_2,\ldots,v_{d_v}\}$, and for $i=1,2,\ldots,d_v$, add the edge $(u_i,v_i)$.
We say that $G_b(v)$ is obtained by blowing up the vertex $v$.
The graph obtained by blowing up a set of vertices $U\subseteq V$ each with degree at least 2 is defined as sequentially blowing up each vertex in $U$.
Just like the edge deletion, note that if a graph has small Max-Cut, then by blowing up a small number of vertices it should be possible to make the graph bipartite.
Now, it is crucial to note that for any set of vertices $U\subseteq V$ each with degree at least 2, the graph $G_b(U)$ is distributed as a configuration model given its degree sequence.
The above key observation enables us to estimate the probability that blowing up a small set of vertices makes the graph bipartite.

Thus our proof argument builds in two steps as follows: 
(i) First in Lemma~\ref{lem:prob-bipartite} we show that the probability that a supercritical configuration model is bipartite is exponentially small, and then 
(ii) Using union bound we establish that for any $\nu>1$, there exists a $\delta>0$, for which the probability that blowing up \emph{any} set of $\delta n$ vertices makes the graph bipartite converges to 0. 
This will complete the proof of  Theorem~\ref{thm:max-cut}~(ii).
First we formally state and prove Lemma~\ref{lem:prob-bipartite}.

Notice that since $\nu>1$ and $\PR(D=1)> 0$, we must have some $k\geq 2$ such that $\PR(D=k)>0$. 
Without loss of generality, in the rest of this section we assume that $\PR(D=2)>0$. The argument below remains identical when $\PR(D=2)=0$, in which case we proceed with $\min\{k: \PR(D=k) >0\}<\infty$ instead of 2. 
Recall that $\hat{n} = n-n_0-n_1$.
Denote $\hat{n}^* = \hat{n}-n_2$.
Note that in Algorithm~\ref{algo-CM} until the time step $\hat{n}^*$, first the vertices of degree larger than 2 are formed.
After this the vertices of degree 2 are formed during time steps $\hat{n}^*+1\leq t\leq \hat{n}$, followed by creating vertices of degree one for $t>\hat{n}$. 
It is crucial to observe that for $t(\varepsilon) = \hat{n} - \varepsilon \ell_n$, the graph $\mathcal{G}_n(t(\varepsilon))$ is distributed as a configuration model with the criticality parameter
\begin{equation}
\nu_n(\varepsilon) =\frac{\sum_{i\in [n]}d_i(d_i-1)-2\varepsilon \ell_n}{\sum_{i\in [n]}d_i} =\nu_n -2\varepsilon, \quad \text{and} \quad \lim_{n\to\infty}\nu_n(\varepsilon) = \nu(\varepsilon)>1,
\end{equation}
for $\varepsilon>0$ sufficiently small.
Denote by $\mathscr{C}_{\sss (1)}(t)$ the largest connected component of $\mathcal{G}_n(t)$.
Then by Theorem~\ref{thm:JL09}~(i), for $t\geq t(\varepsilon)$ there exists $\eta (t)>0$ such that 
\begin{equation}\label{eq:giant-G-eps}
\frac{|\sCL(t)|}{n} \pto \eta(t). 
\end{equation}
\begin{lemma}\label{lem:prob-bipartite}
There exists a constant $C_0>0$, such that 
\begin{equation}\label{eq:prob-bip-exp}
\prob{\CM \text{ is bipartite}}\leq \e^{-C_0 n}.
\end{equation}  
\end{lemma}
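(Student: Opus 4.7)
My plan is to reduce the problem to a first-moment union bound over bipartitions of the $2$-core of the giant component. First, if $\CM$ is bipartite then so is its $2$-core $\mathscr{C}_{\sss (1)}^2$; and, as observed in the proof of Lemma~\ref{lem-epsilon-delta-2-core}, conditional on its degree sequence $\Mtilde{\bld{d}}$ the $2$-core is distributed as a configuration model with that degree sequence. Writing $n_2$ and $m_2$ for the number of vertices and edges of the $2$-core, a fixed bipartition $(A, A^c)$ can have all edges crossing only when $\sum_{i\in A}\tilde d_i = m_2$, in which case the conditional probability of this event equals $m_2!/(2m_2 - 1)!! = 2^{m_2}/\binom{2m_2}{m_2}$, which is at most $C\sqrt{m_2}\cdot 2^{-m_2}$ by Stirling. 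A union bound over the $2^{n_2}$ bipartitions then gives
\begin{equation*}
\PR\bigl(\mathscr{C}_{\sss (1)}^2 \text{ is bipartite} \bigm| \Mtilde{\bld{d}}\bigr) \leq C\sqrt{n}\cdot 2^{n_2 - m_2}.
\end{equation*}

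Next, the handshake identity on the $2$-core yields $m_2 - n_2 = \tfrac12 \sum_{j \geq 2}(j-2) N_j \geq \tfrac12 N_3$, where $N_j$ denotes the number of degree-$j$ vertices of the $2$-core, so the bound becomes $C\sqrt{n}\cdot 2^{-N_3/2}$. The hypotheses $\nu > 1$ and $\PR(D=1) > 0$ force $\PR(D \geq 3) > 0$ (otherwise one computes $\nu = 2\PR(D=2)/\E[D] < 1$), and Claim~\ref{claim:deg-dist-2core} therefore gives $N_3/n \pto \rho_3 > 0$. To convert this convergence into the exponential tail claimed by the lemma, I would establish
\begin{equation*}
\PR\bigl(N_3 < \rho_3 n/2\bigr) \leq \e^{-cn}
\end{equation*}
for some $c>0$, either by an Azuma--Hoeffding bound on the Doob martingale adapted to the sequential half-edge pairing (after verifying that a single edge switch alters $N_3$ by at most a slowly growing quantity outside of rare deep-tree events), or by invoking a central-limit-type concentration for the $2$-core in the spirit of Janson--Luczak~\cite{JL07}. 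Splitting on $\{N_3 \geq \rho_3 n/2\}$ then yields
\begin{equation*}
\PR(\CM \text{ is bipartite}) \leq C\sqrt{n}\cdot 2^{-\rho_3 n/4} + \e^{-cn} \leq \e^{-C_0 n},
\end{equation*}
for any $C_0 < \rho_3 (\ln 2)/4$ and all $n$ large enough.

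The main obstacle is the exponential concentration of $N_3$: Claim~\ref{claim:deg-dist-2core} provides only convergence in probability, whereas the lemma demands an exponential-in-$n$ tail. The naive bipartition bound applied to $\CM$ itself would only give exponential decay when $\mu > 2$, so passing through the $2$-core (which has average degree strictly greater than two as soon as $\rho_3 > 0$) is essential, and this is precisely where the concentration step becomes unavoidable. All remaining steps are routine first-moment and Stirling estimates.
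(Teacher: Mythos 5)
Your reduction is clean and the first-moment computation over bipartitions of the $2$-core is correct: the conditional probability $m_2!/(2m_2-1)!!=2^{m_2}/\binom{2m_2}{m_2}$, the union bound $C\sqrt{n}\,2^{n_2-m_2}$, the handshake identity $m_2-n_2\geq N_3/2$, and the deduction $\PR(D\geq 3)>0$ from $\nu>1$ and $\PR(D=1)>0$ are all fine, and you correctly diagnose why one must pass to the $2$-core rather than apply the naive bound to $\CM$ itself. But the step you flag as "the main obstacle" is in fact a genuine, unresolved gap, and it is the heart of the argument. Claim~\ref{claim:deg-dist-2core} is proved by a second-moment method and delivers only $N_3/n\pto\rho_3$; the lemma needs $\PR(N_3<\rho_3 n/2)\leq \e^{-cn}$. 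Neither of your two suggested routes closes this. A CLT-type result in the spirit of Janson--Luczak gives distributional convergence, not exponential tails. The Azuma/bounded-differences route fails as stated because the $2$-core is a non-local functional of the matching: a single switch can close a long pendant path into a cycle and thereby move an unbounded number of vertices into (or out of) the $2$-core, so the martingale increments are not $O(1)$; repairing this via a "typical bounded differences" truncation requires controlling the bad-switch events at exponential scale, which is a substantive piece of work comparable to the original problem and is not sketched. Since every other step of your argument is routine, the proof stands or falls on this concentration estimate, and as written it falls.

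For comparison, the paper avoids any large-deviation statement about the $2$-core by arguing dynamically: using the sequential construction of Algorithm~\ref{algo-CM}, it stops the construction with $\varepsilon\ell_n$ degree-two vertices still to be created, tracks the unique bipartition of the giant of the intermediate graph, shows via elementary binomial concentration that both partite sets acquire $\Theta(n)$ unmatched degree-one (red) vertices with probability $1-\e^{-cn}$, and then observes that each of the remaining $\Theta(n)$ coalescence steps independently destroys bipartiteness with probability bounded away from zero. The only concentration inputs there are binomial tail bounds, which is precisely what your static first-moment approach lacks. If you want to salvage your route, you would need to prove exponential concentration of $m_2-n_2$ (equivalently, of the cycle rank of $\CM$) from scratch; otherwise the sequential argument is the way to get the exponential rate.
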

\begin{proof}
First note that it is enough to show 
\begin{equation}\label{eq:prob-bip-exp-2}
\prob{\CM \text{ is bipartite} \vert \mathcal{G}_n(t(\varepsilon))}\leq \e^{-C_0 n(1+\varepsilon_n)},
\end{equation}
for some $\varepsilon_n \geq 0$ almost surely.
Recall Algorithm~\ref{algo-CM}.
Also, observe that if $\sCL(t(\varepsilon))$ is non-bipartite, then $\CM$ will also be non-bipartite. 
Indeed, if $\sCL(t(\varepsilon))$ is non-bipartite, then it must contain an odd cycle of black nodes, and the process of merging degree one (red) vertices does not affect these existing cycles. 
Thus if there is an odd-length cycle in $\sCL(t(\varepsilon))$, that cycle will be present in $\CM$ as well. 
So assume $\sCL(t(\varepsilon))$ is bipartite.

For $t\geq t(\varepsilon)$ we will now describe an algorithm for partitioning $\sCL(t)$ into two vertex-disjoint sets $H_1(t)$ and $H_2(t)$ in a coupled way.
The sets $H_1(t)$ and $H_2(t)$ are such that if $\sCL(t)$ is bipartite, then these are the unique partite sets.
For $i=1,2$, let $H_i^B(t)$ and $H_i^R(t)$ be the set of black and red vertices in $H_i(t)$, respectively. Also, let $R(t)$ and $R_P(t)$ denote the set of all red vertices and red pairs in $\cG_n(t)$, respectively
(recall that a pair is two degree one vertices joined with each other).
 With a little abuse of notation, we will also write $H_i^B(t)$, $R(t)$ etc.~to denote the cardinality of the respective sets.
\begin{algo}\label{algo:partition}
\normalfont
Initially consider the unique bipartition of $\sCL(t(\varepsilon))$:
$$\sCL(t(\varepsilon)) = H_1(t(\varepsilon))\sqcup H_2(t(\varepsilon)),\ \mathrm{say}.$$ 
At time step $t> t(\varepsilon)$, suppose two red vertices $v_1$ and $v_1$ are coalesced to form a new black vertex $v$ of degree 2. 
Then the sets $H_1(t)$ and $H_2(t)$ are updated according to the following rule:
\begin{enumerate}[{\normalfont (i)}]
\item If both $v_1, v_2\in H_i^R(t-1)$ for either $i=1$ or 2, then 
$$H_i(t)= \big(H_i(t-1)\setminus \{v_1,v_2\}\big)\cup \{v\},$$
and the other partition remains unchanged.
\item If $v_1\in H_1^R(t-1)$ and $v_2\in R(t-1)\setminus (H_2^R(t-1)\cup H_1^R(t-1))$, then 
\begin{align*}
H_1(t) &= \Big(H_1(t-1)\setminus \{v_1\}\Big)
\cup\{v\}\bigcup_{k=1}^\infty \Big(\mathcal{N}[v_2,2k]\setminus \mathcal{N}[v_2,2k-1]\Big),\\  
H_2(t) &= H_2(t-1)\bigcup_{k=0}^\infty \Big(\mathcal{N}[v_2,2k+1]\setminus \mathcal{N}[v_2,2k]\Big)
\end{align*}
where by convention, the zero neighborhood of a vertex is the vertex itself.
\item If $v_1\in H_2^R(t-1)$ and $v_2\in R(t-1)\setminus (H_2^R(t-1)\cup H_1^R(t-1))$, then repeat Step (ii) above by interchanging the role of $H_1$ and $H_2$.
\item If both $v_1, v_2\in R(t-1)\setminus (H_2^R(t-1)\cup H_1^R(t-1))$, then $H_i(t) = H_i(t-1)$, $i=1,2$.
\item If $v_1\in H_1^R(t-1)$ and $ v_2\in H_2^R(t-1)$ or vice versa, then remove $v_1$ and $v_2$ from their respective sets, and add $v$ to $H_1(t)$ or $H_2(t)$ arbitrarily. 
\end{enumerate}
\end{algo}
\vspace{.25cm}

\noindent
Note that for $t_1<t_2$, $H_1^B(t_1)\subseteq H_1^B(t_2)$ and $H_2^B(t_1)\subseteq H_2^B(t_2)$.
Also, if $\CM$ is bipartite, then $\cG_n(t)$ must be bipartite for all $t>t(\varepsilon)$.
In that case, Case (v) of Algorithm~\ref{algo:partition} should not occur, otherwise the bipartiteness will be lost. 
We now claim that the number of red vertices in both partitions will become order $n$ at some time step after $t(\varepsilon)$.
Afterwards we establish that if the claim is true, then the probability that
Case (v) of Algorithm~\ref{algo:partition} will not occur is exponentially small.
As a result, $\CM$ is bipartite with exponentially small probability.
\begin{claim}\label{claim:good-cut}
For sufficiently small $\varepsilon>0$, there exists $\delta_1=\delta_1(\varepsilon)>0$ and $\delta_2=\delta_2(\varepsilon)>0$ such that
\begin{equation}
\prob{\forall\ t>t(\varepsilon),\ H_1^R(t)<\delta_1n\  \mbox{\rm or }H_2^R(t)<\delta_2n\vert\mathcal{G}_n(t(\varepsilon))} \leq \e^{-C_0n (1+\oP(1))}.
\end{equation}
\end{claim}
\begin{claimproof}
Note that since $\mathcal{G}_n(t(\varepsilon))$ is distributed as a configuration model,  \cite[Theorem~2.3~(i)]{JL09} and Lemma~\ref{lem:num-pairs} respectively yields
\begin{equation}\label{eq:total-red-giant}
\frac{H_1^R(t(\varepsilon))+H_2^R(t(\varepsilon))}{n} \pto r(\varepsilon)>0\qquad\mathrm{and}\qquad \frac{R_P(t(\varepsilon))}{n}\pto r_P(\varepsilon)>0.
\end{equation}
Due to~\eqref{eq:total-red-giant}, one of the sets $H_1^R(t(\varepsilon))$ or $H_2^R(t(\varepsilon))$ has atleast $r(\varepsilon) n/4$ red vertices with high probability.
Without loss of generality, let the part be $H_1^R(t(\varepsilon))$.
Since at each time step $t> t(\varepsilon)$ only degree 2 vertices are created, the set of red vertices can deplete by at most 2. 
Therefore, for $t^*(\varepsilon) = t(\varepsilon)+\min\{r(\varepsilon) n/8,\varepsilon\ell_n/2\}$, 
\begin{equation}\label{eq:delta1up}
\inf_{t(\varepsilon)\leq t\leq t^*(\varepsilon)} H^R_1(t)\geq\frac{r(\varepsilon)n}{8}=:\delta_1n.
\end{equation}
It is important to note that both $r(\varepsilon)$ and $r_P(\varepsilon)$ are bounded away from zero as $\varepsilon\to 0$. Thus $\delta_1$ remains positive even when $\varepsilon$ is chosen small enough.
Recall from Algorithm~\ref{algo:partition}(ii) that during the time interval $[t(\varepsilon), t^*(\varepsilon)]$, $H_2^R$ increases by at least one if some red vertex in $H_1^R$ is coalesced with one of the vertices outside the set $H_1^R\cup H_2^R$, in particular, with one belonging to some red pair in $R_P$. 
Using \eqref{eq:delta1up}, at each time step the probability of the latter event, conditionally on $\mathcal{G}_n(t(\varepsilon))$,  is atleast 
$$\frac{r(\varepsilon)n}{8}r_P(\varepsilon)n{n_1 + 2 n_2\choose 2}^{-1}\geq \delta_1 c_1(1+\oP(1)),$$
for some $c_1\in (0,1]$.
Denote by $\mathcal{A}(t)$ the cumulative number of red vertices thus added to $H_2^R$ up to time $t$ starting from $t(\varepsilon)$.
 Observe that $\mathcal{A}(t)$ stochastically dominates a binomial random variable with $\min\{r(\varepsilon) n/8,\varepsilon\ell_n/2\}$ number of trials and the success probability atleast $\delta_1 c_1(1+\oP(1))$. 
Therefore, standard concentration inequalities for the binomial distribution \cite[Corollary 2.3]{JLR00} yields 
\begin{equation}\label{eq:bound-deg-1-A}
\PR(\mathcal{A}(t^*(\varepsilon))\leq \delta_2' n|\mathcal{G}_n(t(\varepsilon)))\leq \e^{-C_0'n (1+\oP(1))}
\end{equation}
for some suitable $\delta_2'>0$ and a constant $C_0'>0$.
Further notice that some of the red vertices in $H_2^R(t)$ have been coalesced to form new black vertices during $[t(\varepsilon),t^*(\varepsilon)]$.
This can occur if only if at the coalescence step both red vertices are selected from $H_2^R$, which occurs with probability at most ${H_2^R(t)\choose 2}/{n_1\choose 2}$.
Denote the cumulative number of such coalesced red vertices up to time $t$ by $\mathcal{B}(t)$. 
Define 
$$\tau(\varepsilon):= \inf\{t\geq t(\varepsilon): H_2^R(t) \geq  \delta_2'n/2\}\wedge t^*(\varepsilon).$$
Then observe that for $t\in [t(\varepsilon), \tau(\varepsilon)]$, the quantity $\mathcal{B}(t)$ is dominated by a binomial random variable  with $\min\{r(\varepsilon) n/8,\varepsilon\ell_n/2\}$ number of trials and success probability at most $\delta_2' n/2n_1$. 
The mean of this binomial random variable is of the order at most $\varepsilon\delta_2'n$, which can be made arbitrarily small compared to $\delta_2'n$ by choosing $\varepsilon$ small enough.
Therefore, standard concentration inequalities for the binomial distribution again imply that 
\begin{equation}\label{eq:bound-deg-1-B}
\prob{\mathcal{B}(\tau(\varepsilon))\geq \delta_2'n/4\ \vert\  \mathcal{G}_n(t(\varepsilon)), \{H_1^R(t) \leq  \delta_2'n,\forall\ t(\varepsilon) \leq t < \tau(\varepsilon)\}}\leq \e^{-C_0''n(1+\oP(1))}
\end{equation}
for some constant $C_0''>0$.
Now set $\delta_1$ as above and $\delta_2 := \delta_2'/2$, and observe that $H_2^R(t) = H_2^R(t(\varepsilon))+\mathcal{A}(t)-2\mathcal{B}(t)$. 
Thus \eqref{eq:bound-deg-1-A} and \eqref{eq:bound-deg-1-B} yields that either the probability that the following will not occur is exponentially small
\begin{align*}
H_2^R(\tau(\varepsilon)) &= H_2^R(t(\varepsilon))+\mathcal{A}(\tau(\varepsilon))-2\mathcal{B}(\tau(\varepsilon))\geq \delta_2 n,
\end{align*}
or $H_2^R(t)>\delta_2n$ for some $t\in[t(\varepsilon),  \tau(\varepsilon)]$.
%and the probability that neither of the above two events occur is exponentially small. 
In either of the two cases, this completes the proof of Claim~\ref{claim:good-cut}.
\end{claimproof}
\noindent
Let $t^{**}(\varepsilon):=\tau(\varepsilon)+(\delta_1\wedge\delta_2)n/4$.
Observe that due to Claim~\ref{claim:good-cut} and the argument given above~\eqref{eq:delta1up},
\begin{equation}
\inf_{\tau(\varepsilon)\leq t\leq t^{**}(\varepsilon)} H^R_i(t)\geq \frac{\delta_in}{4}\qquad i = 1,2.
\end{equation}
Recall that while forming the degree 2 vertices from $\mathcal{G}_n(\tau(\varepsilon))$ to $\CM$, the bipartiteness is lost if one red vertex is chosen from~$H_1^R$ and the other is chosen  from~$H_2^R$.
Therefore, for any $t\in[\tau(\varepsilon), t^{**}(\varepsilon)]$,
\begin{align*}
&\prob{\text{Bipartiteness is not preserved at time }t \vert \mathcal{G}_n(t(\varepsilon)),\ H_1^R(\tau(\varepsilon))\geq \delta_1n,\  H_2^R(\tau(\varepsilon))\geq \delta_2n}\\
&\geq \frac{\delta_1\delta_2n^2}{16{n_1 \choose 2}} = (1-c_0)(1+\oP(1)),
\end{align*}
for some $c_0\in (0,1)$.
Thus the probability that bipartiteness is preserved troughtout  the time interval $[\tau(\varepsilon), t^{**}(\varepsilon)]$ given $\mathcal{G}_n(t(\varepsilon))$, $H_1^R(\tau(\varepsilon))\geq \delta_1n$,  and $H_2^R(\tau(\varepsilon))\geq \delta_2n$ is upper bounded by $\exp(-C_0(1+\oP(1))n)$
for some constant $C_0>0$.
The proof of Lemma~\ref{lem:prob-bipartite} is now complete.
\end{proof}
We need one further lemma to complete the proof of Theorem~\ref{thm:max-cut}~(ii).
\begin{lemma}
\label{lem:small-sq-nbd} 
Under Assumption~\ref{assumption1}.\ref{assumption1-2}, given any $\varepsilon>0$, there exists $\delta = \delta(\varepsilon) >0$, such that for all sufficiently large $n$, the sum of squares of the degrees of \emph{any} collection of $\delta n$ vertices is at most~$\varepsilon n$, i.e. $\sum_{u\in U}d_i(d_i-1)<\varepsilon n$ uniformly over all subsets $U\subseteq [n]$ such that $|U|< \delta n$.
\end{lemma}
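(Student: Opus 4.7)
The plan is to mimic the proof of Lemma~\ref{lem:small-nbd} (the $r=1$ base case), replacing the uniform integrability of $D_n$ by that of $D_n^2$. The key point is that Assumption~\ref{assumption1}.\ref{assumption1-2} supplies convergence of the second moment, $\E[D_n^2]\to \E[D^2]<\infty$, which together with the weak convergence $D_n\dto D$ implies that the family $\{D_n^2\}_{n\geq 1}$ is uniformly integrable.

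Concretely, first I would fix $\varepsilon>0$ and use uniform integrability to choose $K=K(\varepsilon)$ sufficiently large so that, for all $n$ large enough,
\begin{equation*}
\frac{1}{n}\sum_{i\in [n]} d_i^2\,\ind{d_i>K} < \frac{\varepsilon}{2}.
\end{equation*}
Then for any subset $U\subseteq [n]$ I would split the sum according to whether $d_i\leq K$ or $d_i>K$:
\begin{equation*}
\frac{1}{n}\sum_{i\in U} d_i(d_i-1) \leq \frac{1}{n}\sum_{i\in U} d_i^2\,\ind{d_i\leq K} + \frac{1}{n}\sum_{i\in [n]} d_i^2\,\ind{d_i>K} \leq K^2\,\frac{|U|}{n}+\frac{\varepsilon}{2}.
\end{equation*}
Finally, I would set $\delta=\delta(\varepsilon):=\varepsilon/(2K^2)$, so that $|U|<\delta n$ forces $K^2|U|/n<\varepsilon/2$, giving $\sum_{i\in U} d_i(d_i-1)<\varepsilon n$ uniformly in $U$.

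There is no real obstacle here, the whole argument is a single truncation step; the only ingredient beyond bookkeeping is verifying that Assumption~\ref{assumption1}.\ref{assumption1-2} does indeed yield uniform integrability of $\{D_n^2\}$, which follows by the standard fact that weak convergence plus convergence of the first moments of a nonnegative sequence implies uniform integrability (applied here to $D_n^2$ using $\E[D_n^2]\to \E[D^2]$).
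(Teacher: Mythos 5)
Your proof is correct and follows the same truncation argument the paper uses: the paper states that the proof of Lemma~\ref{lem:small-sq-nbd} is identical to that of Lemma~\ref{lem:small-nbd}, and your argument is precisely that proof with $d_i$ replaced by $d_i^2$, using the uniform integrability of $\{D_n^2\}$ that the paper itself derives from Assumption~\ref{assumption1}.\ref{assumption1-2} (see the proof of Lemma~\ref{lem:maxdeg}).
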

The proof of Lemma~\ref{lem:small-sq-nbd} is identical to the argument given in the proof of Lemma~\ref{lem:small-nbd}, and hence is omitted. With all the above ingredients in place, we now proceed to prove Theorem~\ref{thm:max-cut}~(ii).
\begin{proof}[Proof of Theorem~\ref{thm:max-cut}~(ii)]
Fix $\varepsilon>0$.
Using Lemma~\ref{lem:small-sq-nbd}, let us choose $\delta_0= \delta_0(\varepsilon)>0$ such that $|U|\leq \delta_0n$ implies $\sum_{i\in U}d_i(d_i-1)<\varepsilon n$.
Notice that if we blow up at most $\delta_0 n$ vertices, then the criticality parameter of the new graph changes by at most $\varepsilon n/\ell_n$, and thus for small $\varepsilon>0$, the blown up graph is also supercritical with high probability. 
Further, let $\mathcal{E}_k$ denote the event that the distance from bipartiteness of $\CM$ is $k$.
Thus, choosing $\delta<\delta_0$ small enough,  Lemma~\ref{lem:prob-bipartite} yields
\begin{equation}
\prob{\mathrm{DistBip}(\CM)\leq \delta n}\leq \sum_{k=1}^{\delta n} {\ell_n/2 \choose k} \PR(\mathcal{E}_k)\leq \sum_{k=1}^{\delta n} {\ell_n/2 \choose k}\e^{-C_0 n} \to 0.
\end{equation} 
This completes the proof of Theorem~\ref{thm:max-cut}~(ii).
\end{proof}

\subsection{High-density regime}\label{ssec:large-mean}
The proof for the supercritical case (Theorem~\ref{thm:max-cut}~(iii)) uses the first moment method. 
For a set of vertices $A$, recall that $S(A)$ is the total number of half-edges associated with $A$, and $E(A,A^c)$ is the number of edges between $A$ and $A^c$. 
Partition the graph $\CM$ in two parts $A$ and $A^c$, where we assume without loss of generality that $S=S(A)\leq \ell_n/2$. %We note that without loss of generality, any
% partition can be represented in this form. 
In this case, 
 \begin{equation}
 \label{eq:upper_bound}
  \begin{split}
  \prob{E(A,A^c)=K}& = \frac{\binom{S }{K}K! (S-K-1)!! \binom{\ell_n-S }{ K}(\ell_n-S-K-1)!!}{(\ell_n-1)!!} \\
  &= \exp\big(\ell_n f(x_n, y_n)  (1+ o(1))\big) , \quad \text{where }\\
 f(x, y)  &= \ln\big(x^{x} (1-x)^{1-x}(1-x-y)^{-(1-x-y)/2}(x-y)^{-(x-y)/2}y^{-y}\big),
  \end{split}
 \end{equation}
 and $y_n = K/ \ell_n$, $x_n = S/ \ell_n$.  
 We note that 
 \begin{align}
 \frac{\partial f}{\partial x} (x,y) = \ln \bigg(\frac{x}{1-x}\bigg) - \frac{1}{2} \ln \bigg(\frac{x-y}{1- x -y}\bigg). \nonumber 
 \end{align}
 The fact that $x/(1-x) > (x-y)/(1-x-y)$ for any $0<y< x \leq 1/2$, implies that for any fixed $y$, $\{f(x,y): x \leq 1/2\}$ is maximized at $x=1/2$. 
Now, for any $1\leq K\leq \ell_n/2$,
 \begin{equation}
  \Big|\Big\{U\subset V: \sum_{i\in U}d_i=K\Big\}\Big|\leq 2^n.
 \end{equation}  
 Thus, for any constant $c>0$, a union bound and \eqref{eq:upper_bound} yields 
 \begin{align}
 &\PR\Big(\Mcut(\CM) \geq \frac{\ell_n}{4} + n c \sqrt{\mu}\Big) = \PR\Big(\exists\ A \subseteq[n] , S(A) \leq \frac{\ell_n}{2} , E(A,A^c) \geq \frac{\ell_n}{4} + n c \sqrt{\mu} \Big)  \nonumber\\
 &\hspace{1cm}\leq \sum_{S=1}^{\ell_n/2} \sum_{A: S(A) = S} \sum_{K=\frac{\ell_n}{4}+ n c \sqrt{\mu}}^{S} \PR\Big( E(A,A^c) = K \Big) \nonumber\\ 
 %
%& \hspace{1cm}\leq 2^n \frac{\ell_n}{2} \max_{ \frac{\ell_n}{4} + n c \sqrt{\mu} \leq K \leq H}  \PR\Big(S \subseteq[n] , H(S) \leq \frac{\ell_n}{2} , E(S,S^c)=K \Big) \nonumber\\
%
%&\hspace{1cm}\leq  2^n \Big(\frac{\ell_n}{2} \Big)^2 
%\max_{ \frac{\ell_n}{4} + n c \sqrt{\mu} \leq K \leq H}
%\max_{1\leq H \leq \frac{\ell_n}{2}} \PR\Big(S\subseteq [n], H(S) = H, E(S,S^c) = K\Big) \nonumber\\
%
%&\hspace{1cm}\leq  2^n \Big(\frac{\ell_n}{2} \Big)^2  
% \max_{ \frac{\ell_n}{4} + n c \sqrt{\mu} \leq K \leq \frac{\ell_n}{2}}
% \max_{ K \leq H \leq \frac{\ell_n}{2}} \exp \Big[\ell_n f \Big(\frac{H}{\ell_n}, \frac{K}{\ell_n} \Big) \Big]\nonumber\\
%
&\hspace{1cm}\leq  2^n \Big(\frac{\ell_n}{2} \Big)^2 \max_{ \frac{\ell_n}{4} + n c \sqrt{\mu} \leq K \leq \frac{\ell_n}{2}}  \exp \Big[\ell_n f \Big(\frac{1}{2}, \frac{K}{\ell_n}  \Big) (1+ o(1))\Big] . \nonumber 
 \end{align}
Writing $K/\ell_n = y$,  notice that
 \begin{equation}
  \begin{split}
   &2^n\exp [\ell_n f(1/2,y)] \\
   &=\exp\bigg[n\ln(2)+\ell_n\ln\bigg(\frac{1}{2}y^{-y}\Big(\frac{1}{2}-y\Big)^{-\frac{1}{2}(\frac{1}{2}-y)}\Big(\frac{1}{2}-y\Big)^{-\frac{1}{2}(\frac{1}{2}-y)}\bigg)+o(n)\bigg]\\
   &= \exp\bigg[(1+o(1))\ell_n\ln\bigg(2^{1/\mu}\Big(\frac{1}{2}\Big(\frac{1}{2}-y\Big)^{-\frac{1}{2}+y}y^{-y}\Big)\bigg)\bigg],
  \end{split}
 \end{equation}
since $\ell_n/n\to\mu$. 
Therefore, we obtain
\begin{equation*}
\begin{split}
&\PR\Big(\Mcut(\CM) \geq \frac{\ell_n}{4} + n c \sqrt{\mu}\Big)\\
&\hspace{1cm}\leq \kappa n^2\max_{ \frac{1}{4} + \frac{c}{\sqrt{\mu}} \leq y \leq \frac{1}{2}} \exp\bigg[\ell_n\ln\Big(2^{\frac{1}{\mu}-1}\Big(\frac{1}{2}-y\Big)^{-\frac{1}{2}+y}y^{-y}\Big)\bigg],
\end{split}
\end{equation*}for some constant $\kappa>0$.
Now observe that $\big(\frac{1}{2}-y\big)^{-\frac{1}{2}+y}y^{-y}$ is non-increasing in the interval $\big(1/4,1/2]$, and therefore the above maximum is attained at $y=  \frac{1}{4} + \frac{c}{\sqrt{\mu}}$.
Define 
$$f(c,\mu):=\Big(\frac{1}{4}-\frac{c}{\sqrt{\mu}}\Big)^{-\left(\frac{1}{4}-\frac{c}{\sqrt{\mu}}\right)}\Big(\frac{1}{4}+\frac{c}{\sqrt{\mu}}\Big)^{-\left(\frac{1}{4}+\frac{c}{\sqrt{\mu}}\right)}-2^{-\frac{1}{\mu}+1} $$
and
\begin{equation}\label{eq:cstarmu}
c^\star(\mu)=\inf_{c>0}\Big\{c: f(c,\mu)<0 \Big\}.
\end{equation}
Thus, we can conclude that for any $c>c^\star(\mu)$,
$$\prob{\frac{1}{n}\Mcut(\CM)>\frac{\mu}{4}+c\sqrt{\mu}}\to 0,\quad \text{as}\quad n\to\infty.$$
%\vspace{0.25cm}

\noindent
Note that $f(0,\mu) = 2$, $\lim_{c\to (\sqrt{\mu}/4)^-}f(c,\mu) = \sqrt{2}$, and $f(\cdot,\mu)$ in strictly decreasing. Therefore, $c^\star(\mu)<\sqrt{\mu}/4$ for any $\mu>2$.
To see that $c^\star(\mu)\nearrow \sqrt{\ln(2)}/2$ as $\mu\nearrow\infty$, it can be checked using Taylor expansion with respect to $c$ around 0 that
$f(c,\mu) \approx 2(1-2^{-1/\mu})-8c^2/\mu$.
Also, for large $\mu$, $1-2^{-1/\mu}\approx \ln(2)/\mu$.
Thus for large $\mu$, the value of $c^\star(\mu)$ is given by $\sqrt{\ln(2)}/2$.
\qed

{%\small
\section*{Acknowledgment}
The authors sincerely thank Remco van der Hofstad for several helpful discussions.
The authors also thank Sem Borst and Remco van der Hofstad for a careful reading of the manuscript.
SD and DM were financially supported by The Netherlands Organization for Scientific Research (NWO) through Gravitation Networks grant 024.002.003, and DM was also supported by TOP-GO grant 613.001.012. SS was partially supported by the William R.~and Sara Hart Kimball Stanford Graduate Fellowship.

%\small
\bibliographystyle{apa}
\bibliography{cut}

}
\end{document}